\journal{arXiv}
\def\csname ver@l3regex.sty\endcsname{} 
\newtheorem{theorem}{Theorem}[section]
\newtheorem{proposition}{Proposition}[section]
\newtheorem{lemma}{Lemma}[section]
\newtheorem{remark}{Remark}[section]
\begin{document}

\begin{frontmatter}

\title{
Repeated-Root Constacyclic Codes of Length $3p^s$ over the Finite Non-Chain Ring $\frac{\mathbb{F}_{p^m}[u, v]}{\langle u^2, v^2, uv-vu\rangle}$ and their Duals\\
}

\author{Divya Acharya $^a$, Prasanna Poojary$^a$, Vadiraja Bhatta G R$^b$\footnote{vadiraja.bhatta@manipal.edu}}
 \address{$^{a}$ Department of Mathematics, Manipal Institute of Technology Bengaluru, Manipal Academy of Higher Education, Manipal, Karnataka, India\\
 $^b$ Department of Mathematics, Manipal Institute of Technology, Manipal Academy of Higher Education, Manipal, Karnataka, India}

\begin{abstract}
    This study aims to determine the algebraic structures of $\alpha$-constacyclic codes of length  $3p^s$ over the finite commutative non-chain ring $\mathcal{R}=\frac{\mathbb{F}_{p^m}[u, v]}{\langle u^2, v^2, uv-vu\rangle}$, for a prime $p \neq 3.$ For the unit $\alpha$, we consider two different instances: when  $\alpha$ is a cube in  $\mathcal{R}$ and when it is not. Analyzing the first scenario is relatively easy. When  $\alpha$ is not a unit in  $\mathcal{R}$, we consider several subcases and determine the algebraic structures of constacyclic codes in those cases. Further, we also provide the number of codewords and the duals of $\alpha$-constacyclic codes.  
\end{abstract}

\begin{keyword}
Constacyclic codes  \sep  Codes over rings \sep Repeated-root codes \sep Chain rings \sep Dual codes
\end{keyword}
\end{frontmatter}


\section{Introduction}
	Constacyclic codes are significant from a theoretical and practical standpoint since they may be efficiently encoded with a simple shift register and they establish strong connections with algebra. The $\alpha$-constacyclic codes of length $n$ over the finite field $\mathbb{F}_q$ are identified as ideals $\langle \ell(x)\rangle$  of the ambient ring $\frac{\mathbb{F}_q[x]}{\langle x^n-\alpha \rangle}$ , where $\ell(x)$ is a divisor of $x^n-\alpha$ . The codes are referred to as simple root codes when the length of the code $n$ is relatively prime to the characteristic of the finite field $\mathbb{F}_q$. Otherwise, they are known as repeated-root codes, which Berman first investigated in 1967 \cite{berman1967semisimple}, followed by a series of papers \cite{massey1973polynomial}, \cite{falkner1979existence}, \cite{roth1986cyclic}.
 Van Lint \cite{van1991repeated}  and Castagnoli et al. \cite{castagnoli1991repeated} carried out the most extensive study on repeated-root codes.
	
 Constacyclic codes over $\mathbb{Z}_4$ were studied in \cite{calderbank1996cyclic}, \cite{helleseth2001codes}.
    Constacyclic codes over $\mathbb{F}_{2} + u\mathbb{F}_{2}$ are very interesting because the structure of $\mathbb{F}_{2} + u\mathbb{F}_{2}$ is lies between $\mathbb{F}_{2^2}$ and $\mathbb{Z}_4$ as it is additively analogous to $\mathbb{F}_{2^2}$ and multiplicatively analogous to $\mathbb{Z}_4$ \cite{abualrub2009constacyclic}. 

    As a result, cyclic and constacyclic codes over $\mathbb{F}_{2} + u\mathbb{F}_{2}$ are of interest to many researchers.
    
	The structure of $\lambda$-constacyclic codes of lengths $p^s$ and $mp^s$ over the chain ring $\frac{\mathbb{F}_{p^m}[u]}{\langle u^k \rangle}$ respectively, was examined by Dinh et al. \cite{dinh2016repeated} and Guenda and Gulliver \cite{guenda2015repeated}. In \cite{kai20101+}, Kai et al. discused $(1+\lambda u)$-constacyclic codes of arbitrary length over the ring $\frac{\mathbb{F}_{p}[u]}{\langle u^m \rangle}$, for a unit $\lambda$ in $\mathbb{F}_{p}$. Constacyclic codes and their duals over the ring $\mathbb{F}_{p^m}+u\mathbb{F}_{p^m}$ were obtained in a series of papers \cite{dinh2010constacyclic}, \cite{dinh2015negacyclic}, \cite{chen2016constacyclic}, \cite{dinh2020constacyclic3ps}, \cite{dinh2018negacyclic}, \cite{dinh2018cyclic}, \cite{dinh2019alpha}, \cite{dinh2019class}. Constacyclic codes over the ring $\mathbb{F}_{p^m}+u\mathbb{F}_{p^m}+u^2\mathbb{F}_{p^m}$ were extensively studied in a series of papers \cite{sobhani2015complete}, \cite{sriwirach2021repeated}, \cite{boudine2023complete}, \cite{Complete2023Boudine}, \cite{laaouine2024class},\cite{36}.

	In \cite{yildiz2011cyclic}, Yildiz and Karadeniz examined cyclic codes of odd length over the ring $\frac{\mathbb{F}_{2}[u, v]}{\langle u^2, v^2, uv-vu\rangle}$ which is not a chain ring. As the Gray images of these cyclic codes, they found a few good binary linear codes. In \cite{karadeniz20111+} Yildiz and Karadeniz studied $(1+v)$-constacyclic codes over the ring $\mathbb{F}_{2}+u\mathbb{F}_{2}+v\mathbb{F}_{2}+uv\mathbb{F}_{2}$ and they obtained cyclic codes over $\mathbb{F}_{2}+u\mathbb{F}_{2}$ as images of  $(1+v)$-constacyclic codes over $\mathbb{F}_{2}+u\mathbb{F}_{2}+v\mathbb{F}_{2}+uv\mathbb{F}_{2}$ under a natural Gray map.
	
 Negacyclic codes of odd length over $\frac{\mathbb{F}_{p}[u, v]}{\langle u^2, v^2, uv-vu\rangle}$ are studied by Ghosh \cite{ghosh2015negacyclic}. Bag \cite{bag2019classes} investigated $(\lambda_1 + u\lambda_2)$ and $(\lambda_1 + v\lambda_3)$-constacyclic codes of prime power length over $\frac{\mathbb{F}_{p^m}[u, v]}{\langle u^2, v^2, uv-vu\rangle}$. The authors of \cite{dougherty2012cyclic} considered the more general ring $\frac{\mathbb{F}_{2}[u_1,u_2,\ldots,u_k]}{\langle u_{i}^2, v_{j}^2, u_iv_j-v_ju_i\rangle}$. 
 
 These investigations were expanded to cyclic codes over the ring $\frac{\mathbb{F}_{2^m}[u, v]}{\langle u^2, v^2, uv-vu\rangle}$ by Sobhani and Molakarimi in \cite{sobhani2013some}. Dinh et al. in \cite{dinh2020constacyclicps} investigated the structures of all constacyclic codes of prime power length over the ring $\frac{\mathbb{F}_{p^m}[u, v]}{\langle u^2, v^2, uv-vu\rangle}$.

	The paper is structured in the following manner. Section 2 provides a summary of preliminary information. The primary findings of this investigation are outlined in Section 3. If $\alpha$ is a cube in the ring $\mathcal{R}=\mathbb{F}_{p^m} + u\mathbb{F}_{p^m} + v\mathbb{F}_{p^m} +uv\mathbb{F}_{p^m}$,  say $\alpha=\beta^3$, then $\alpha$-constacyclic codes examine under the two cases: $p^m \equiv 1 (\mod{3})$ and $p^m \equiv 2 (\mod{3})$. The case when  $\alpha$ is non-cube in the ring $\mathcal{R}$ requires more detailed analysis and it is handled in several subcases. An element $\alpha=\alpha_1 + \alpha_2 u + \alpha_3 v + \alpha_4 uv \in \mathcal{R}$ is a unit if and only if $\alpha_1$ is nonzero in $\mathbb{F}_{p^m}$. Since we consider unit in  $\mathcal{R}$, we always assume $\alpha_1 \ne 0$. Considering the other three coefficients in $\alpha$  being zero or non-zero, we obtain a total of eight subcases: $\alpha=\alpha_1 $, $\alpha=\alpha_1 + \alpha_2 u $, $\alpha=\alpha_1 +\alpha_3 v $, $\alpha=\alpha_1 + \alpha_4 uv $, $\alpha=\alpha_1 + \alpha_2 u + \alpha_3 v$, $\alpha=\alpha_1 + \alpha_3 v + \alpha_4 uv $, $\alpha=\alpha_1 + \alpha_2 u + \alpha_4 uv $, $\alpha=\alpha_1 + \alpha_2 u + \alpha_3 v + \alpha_4 uv $, where $\alpha_1, \alpha_2, \alpha_3, \alpha_4 \in \mathbb{F}^{*}_{p^m}$. 
 In section 4, we examine the case in which the unit $\alpha=\alpha_1 + \alpha_3 v + \alpha_4 uv$, where $\alpha_1, \alpha_3, \alpha_4 \in \mathbb{F}^*_{p^m}$ and the $\alpha$ is non-cube in $\mathbb{F}_{p^m} + u\mathbb{F}_{p^m} + v\mathbb{F}_{p^m} +uv\mathbb{F}_{p^m}$. In this case, the $\alpha$-constacyclic codes are categorized into four distinct types based on the ideals of the local ring $\frac{\mathcal{R}[x]}{\langle x^{3p^s}-\alpha \rangle}$.  Furthermore, we provide the number of codewords and duals of $\alpha$-constacyclic codes. The structure, number of codewords, and duals of $(\alpha_1 + \alpha_2 u +\alpha_3 v + \alpha_4 uv)$-constacyclic codes of length $3p^s$ over $\mathcal{R}$ are considered in section 5, where $\alpha_1, \alpha_2, \alpha_3 \in \mathbb{F}^*_{p^m}$ and $\alpha_4 \in \mathbb{F}_{p^m}$. Section 6 presents  $\alpha=\alpha_1 + \alpha_4 uv$ constacyclic codes as ideals of $\frac{\mathcal{R}[x]}{\langle x^{3p^s}-(\alpha_1 + \alpha_4 uv) \rangle}$, which has the maximal ideal $\langle (x^3-\alpha_0),  u, v \rangle $.

\section{Preliminaries}
    Let $\Re$ be a finite commutative ring with identity 1 and let $I$ be an ideal. If only one element generates $I$, then it is referred to as a \textit{principal ideal}. If every ideal in the ring $\Re$ is principal,  then the ring is a \textit{principal ideal ring}. If $\Re$ has a unique maximum ideal,  then $\Re$ is referred to as a $local~ ring$. If the set of all ideals in  $\Re$ forms a chain under inclusion,  the ring is referred to as a $chain~ ring.$
	\begin{proposition}\cite{dinh2004cyclic}\label{prop2.1}
    For a finite commutative ring $\Re$ the following conditions are equivalent:
    \begin{enumerate}
        \item $\Re$ is a local ring and its maximal ideal  is principal;
        \item $\Re$ is a local principal ideal ring;
        \item $\Re$ is a chain ring.
    \end{enumerate}
    \end{proposition}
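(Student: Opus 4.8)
\emph{Proof strategy.} The plan is to establish the cycle of implications $(2)\Rightarrow(1)$, $(1)\Rightarrow(3)$, $(1)\Rightarrow(2)$, and $(3)\Rightarrow(1)$, which together give the full equivalence. Throughout, the standing hypothesis that $\Re$ is \emph{finite} is what does the real work: it makes $\Re$ Artinian (so its Jacobson radical is nilpotent) and guarantees that every ideal is finitely generated. The implication $(2)\Rightarrow(1)$ is immediate, since a local principal ideal ring has in particular a principal maximal ideal.

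For $(1)\Rightarrow(3)$ (and, as a by-product, $(1)\Rightarrow(2)$), write $\mathfrak{m}=\langle\gamma\rangle$ for the maximal ideal. First I would invoke that a finite commutative ring is Artinian, so $\mathfrak{m}$ is nilpotent; let $N\geq 1$ be minimal with $\mathfrak{m}^N=0$. Then $\mathfrak{m}^i=\langle\gamma^i\rangle$, and Nakayama's lemma forces the chain $\Re\supsetneq\langle\gamma\rangle\supsetneq\langle\gamma^2\rangle\supsetneq\cdots\supsetneq\langle\gamma^{N}\rangle=0$ to be strictly decreasing. The core of the argument is a valuation-type normal form: every nonzero $a\in\Re$ lies in $\mathfrak{m}^{j}\setminus\mathfrak{m}^{j+1}$ for a unique $j<N$, and for that $j$ one can write $a=w\gamma^{j}$ with $w$ a unit (if $w$ were a non-unit it would lie in $\mathfrak{m}$, forcing $a\in\mathfrak{m}^{j+1}$, a contradiction). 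Given an arbitrary nonzero ideal $I$, let $j_{0}$ be the least exponent occurring among the elements of $I$; then $\gamma^{j_{0}}\in I$ (multiply the witnessing element by $w^{-1}$), while every element of $I$ lies in $\langle\gamma^{j_{0}}\rangle$, so $I=\langle\gamma^{j_{0}}\rangle$. Hence every ideal equals some $\langle\gamma^{i}\rangle$; these are totally ordered by inclusion, giving $(3)$, and each is principal, giving $(2)$.

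For $(3)\Rightarrow(1)$, assume the ideals of $\Re$ form a chain. For non-units $a,b$, the principal ideals $\langle a\rangle$ and $\langle b\rangle$ are proper and comparable, so $\langle a+b\rangle$ and $\langle ra\rangle$ are contained in the larger of the two (still proper); thus the non-units form an ideal and $\Re$ is local, with maximal ideal $\mathfrak{m}$. Since $\Re$ is finite, $\mathfrak{m}=\langle a_{1},\dots,a_{k}\rangle$ is finitely generated, and the principal ideals $\langle a_{1}\rangle,\dots,\langle a_{k}\rangle$ are totally ordered, so one of them, say $\langle a_{i}\rangle$, contains all the others; hence $\mathfrak{m}=\langle a_{i}\rangle$ is principal. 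Combining the four implications yields $(1)\Leftrightarrow(2)\Leftrightarrow(3)$.

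The step I expect to be the main obstacle is the valuation argument inside $(1)\Rightarrow(3)$: one must check that the descending chain of powers of $\mathfrak{m}$ does not stabilize prematurely (this is exactly where Nakayama enters) and that no ideal can sit strictly between two consecutive powers $\langle\gamma^{j+1}\rangle$ and $\langle\gamma^{j}\rangle$. Everything there rests on the unit-times-power normal form, which in turn depends on $\mathfrak{m}$ being both principal and nilpotent; the remaining implications are essentially formal manipulations with the chain condition and finite generation.
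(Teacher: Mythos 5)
Your proof is correct. The paper states this proposition as a cited result from Dinh and L\'opez-Permouth and gives no proof of its own, so there is nothing to compare against; your argument (Nakayama plus the unit-times-power normal form for $(1)\Rightarrow(2),(3)$, and the comparability of principal ideals for $(3)\Rightarrow(1)$) is the standard complete proof of this equivalence for finite commutative rings.
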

    Let $\Re$ be a finite commutative ring with identity. A code $\mathcal{C}$ of length $n$ over $\Re$ is a nonempty subset of $\Re^n$. An element of $\mathcal{C}$ is called a codeword. $\mathcal{C}$ is called a  linear code over $\Re$ if $\mathcal{C}$ is an $\Re$-submodule of $\Re^n$. Let $\alpha$ be a unit of $\Re$.  The $\alpha$-constacyclic shift $\sigma_{\alpha}$ on $\Re^n$ is  defined by
    \begin{center}
		$\sigma_{\alpha}(\zeta_0,  \zeta_1,\ldots,  \zeta_{n-1}) = (\alpha \zeta_{n-1},  \zeta_0,\ldots, \zeta_{n-2}).$
    \end{center}
    A code $\mathcal{C}$ is said to be $\alpha$-constacyclic if $\mathcal{C}$ is closed under the operator $\sigma_{\alpha}$.
    If  $\alpha$ is equal to 1(or -1), then the $\alpha$-constacyclic codes are referred to as cyclic (or negacyclic) codes. In the study of cyclic and constacyclic codes, it is useful to identify a vector  $v = (v_0, v_1, \ldots,  v_{n-1})$ in $\Re^n$ with the polynomial  $v(x) = v_ 0+ v_1x +\cdots+v_{n-1}x^{n-1}$. 
   
 It is well-known that constacyclic codes are precisely the ideals in a quotient ring   \cite{macwilliams1977theory}, \cite{huffman2010fundamentals}.
    \begin{proposition}\label{1}
        A linear code $\mathcal{C}$ of length $n$ over $\Re$ is an $\alpha$-constacyclic if and only if $\mathcal{C}$ is an ideal of $\frac{\Re[x]}{\langle x^n-\alpha \rangle}$.
    \end{proposition}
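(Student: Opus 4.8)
The plan is to set up the familiar coordinate-to-polynomial dictionary and to check that the $\alpha$-constacyclic shift corresponds to multiplication by $x$ in $\mathcal{S}:=\frac{\Re[x]}{\langle x^n-\alpha\rangle}$; the two implications are then immediate. First I would define $\psi\colon\Re^n\to\mathcal{S}$ by $\psi(\zeta_0,\ldots,\zeta_{n-1})=\zeta_0+\zeta_1x+\cdots+\zeta_{n-1}x^{n-1}+\langle x^n-\alpha\rangle$. Because $x^n-\alpha$ is monic of degree $n$, the division algorithm is available in $\Re[x]$ even though $\Re$ need not be a field, and it shows that every coset of $\mathcal{S}$ has a unique representative of degree $\le n-1$. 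Hence $\psi$ is a bijection, and since it is visibly additive and $\Re$-linear, it is an isomorphism of $\Re$-modules; in particular it carries linear codes (i.e. $\Re$-submodules of $\Re^n$) to $\Re$-submodules of $\mathcal{S}$ and back.

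The one computation I would record is that, modulo $x^n-\alpha$,
\[
x\cdot(\zeta_0+\zeta_1x+\cdots+\zeta_{n-1}x^{n-1})\equiv \alpha\zeta_{n-1}+\zeta_0x+\cdots+\zeta_{n-2}x^{n-1},
\]
using $x^n\equiv\alpha$. In other words $\psi\circ\sigma_\alpha=\mu_x\circ\psi$, where $\mu_x$ denotes multiplication by $x$ on $\mathcal{S}$. This is the only place the unit $\alpha$ enters, precisely through the ``wrap-around'' coordinate.

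For the forward direction, suppose $\mathcal{C}$ is a linear $\alpha$-constacyclic code. Linearity says $\psi(\mathcal{C})$ is an $\Re$-submodule of $\mathcal{S}$, and closure of $\mathcal{C}$ under $\sigma_\alpha$ translates via the displayed identity into closure of $\psi(\mathcal{C})$ under $\mu_x$, hence under $\mu_{x^i}$ for every $i\ge 0$. Combined with $\Re$-linearity, $\psi(\mathcal{C})$ is closed under multiplication by every element $\sum_i r_ix^i$ of $\mathcal{S}$, i.e. it is an ideal. Conversely, an ideal $I$ of $\mathcal{S}$ is in particular an $\Re$-submodule, so $\psi^{-1}(I)$ is a linear code, and $I$ is closed under $\mu_x$, so $\psi^{-1}(I)$ is closed under $\sigma_\alpha$ and is therefore $\alpha$-constacyclic.

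I do not anticipate a genuine obstacle; the statement is essentially a routine transcription. The only small points needing care are the uniqueness of the degree-$<n$ representative (which relies on $x^n-\alpha$ being monic, so that polynomial division works over an arbitrary finite commutative ring) and correctly carrying the factor $\alpha$ through the single wrapped coordinate when matching $\sigma_\alpha$ with $\mu_x$.
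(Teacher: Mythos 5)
Your proof is correct and is the standard argument: the paper does not actually prove this proposition but simply cites it as well-known (MacWilliams--Sloane, Huffman--Pless), and your identification of $\Re^n$ with $\frac{\Re[x]}{\langle x^n-\alpha\rangle}$ together with the check that $\sigma_\alpha$ corresponds to multiplication by $x$ is exactly the argument those references give. The two points you flag --- uniqueness of the degree-$<n$ representative via division by the monic polynomial $x^n-\alpha$ over an arbitrary finite commutative ring, and the factor $\alpha$ appearing in the wrapped coordinate --- are indeed the only places where any care is needed, and you handle both correctly.
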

    \begin{proposition}\cite{dinh2010constacyclic}
        The dual of an $\alpha$-constacyclic code is an $\alpha^{-1}$-constacyclic code.
    \end{proposition}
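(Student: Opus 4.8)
The plan is to argue directly with the constacyclic shift operator rather than through the polynomial description. The engine of the proof is that $\sigma_{\alpha}$ and $\sigma_{\alpha^{-1}}$ are mutually adjoint with respect to the standard Euclidean inner product on $\Re^{n}$. Explicitly, for $c=(c_{0},\dots,c_{n-1})$ and $d=(d_{0},\dots,d_{n-1})$ in $\Re^{n}$ a one-line computation gives
\[
\langle \sigma_{\alpha}(c),\,\sigma_{\alpha^{-1}}(d)\rangle
= (\alpha c_{n-1})(\alpha^{-1}d_{n-1}) + \sum_{i=0}^{n-2} c_{i}d_{i}
= \sum_{i=0}^{n-1} c_{i}d_{i} = \langle c,d\rangle ,
\]
where the only fact used is that $\alpha$ is a unit, so $\alpha\alpha^{-1}=1$. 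First I would record this identity.

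Second, I would observe that $\sigma_{\alpha}$ maps $\mathcal{C}$ \emph{onto} $\mathcal{C}$, not merely into it. Indeed $\sigma_{\alpha}^{\,n}$ equals multiplication by the unit $\alpha$ on $\Re^{n}$, so $\sigma_{\alpha}$ is a bijection of $\Re^{n}$ with inverse $\alpha^{-1}\sigma_{\alpha}^{\,n-1}$. Since $\Re$ is finite, $\mathcal{C}$ is finite, and an injective self-map of a finite set that satisfies $\sigma_{\alpha}(\mathcal{C})\subseteq\mathcal{C}$ must satisfy $\sigma_{\alpha}(\mathcal{C})=\mathcal{C}$; equivalently, $\mathcal{C}$, being an $\Re$-submodule closed under $\sigma_{\alpha}$, is automatically closed under $\sigma_{\alpha}^{-1}=\alpha^{-1}\sigma_{\alpha}^{\,n-1}$.

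With these two facts the conclusion follows quickly. Let $d\in\mathcal{C}^{\perp}$; I must show $\sigma_{\alpha^{-1}}(d)\in\mathcal{C}^{\perp}$, i.e.\ $\langle c,\sigma_{\alpha^{-1}}(d)\rangle=0$ for every $c\in\mathcal{C}$. Given such a $c$, use surjectivity to write $c=\sigma_{\alpha}(c')$ with $c'\in\mathcal{C}$. Then by the adjointness identity $\langle c,\sigma_{\alpha^{-1}}(d)\rangle=\langle\sigma_{\alpha}(c'),\sigma_{\alpha^{-1}}(d)\rangle=\langle c',d\rangle=0$. Hence $\mathcal{C}^{\perp}$ is closed under $\sigma_{\alpha^{-1}}$; since $\mathcal{C}^{\perp}$ is also linear (the dual of a linear code is linear), it is an $\alpha^{-1}$-constacyclic code, and by Proposition~\ref{1} an ideal of $\tfrac{\Re[x]}{\langle x^{n}-\alpha^{-1}\rangle}$.

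The only genuine subtlety — the step one should not skip — is the surjectivity of $\sigma_{\alpha}$ on $\mathcal{C}$: the inclusion $\sigma_{\alpha}(\mathcal{C})\subseteq\mathcal{C}$ by itself does not let one rewrite an arbitrary $c\in\mathcal{C}$ as $\sigma_{\alpha}(c')$, which is precisely what the adjointness trick needs. Finiteness of $\Re$ together with $\alpha$ being a unit are exactly the hypotheses that close this gap. (One could instead give a polynomial-theoretic proof, identifying $\mathcal{C}^{\perp}$ with a suitable annihilator ideal in $\tfrac{\Re[x]}{\langle x^{n}-\alpha^{-1}\rangle}$, but the operator argument above is shorter and avoids any reciprocal-polynomial bookkeeping.)
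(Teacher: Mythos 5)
Your proof is correct and complete: the adjointness identity $\langle\sigma_{\alpha}(c),\sigma_{\alpha^{-1}}(d)\rangle=\langle c,d\rangle$ together with the observation that $\sigma_{\alpha}$ restricts to a bijection of the finite set $\mathcal{C}$ (since $\sigma_{\alpha}^{\,n}$ is multiplication by the unit $\alpha$) is exactly the standard argument, and you rightly flag the surjectivity step as the one point that cannot be skipped. The paper itself states this proposition as a citation to Dinh's 2010 paper without reproducing a proof, and your operator-theoretic argument is essentially the proof given in that cited source, so there is nothing to compare beyond noting agreement.
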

    \begin{proposition}\cite{dinh2004cyclic}
        Let $p$ be a prime number and $\Re$ be a finite chain ring of size $p^m$. The number of codewords in any linear code $\mathcal{C}$ of length $n$ over $\Re$ is $p^k$, for some integer $k\in \{0, 1,\ldots,mn\}$. Moreover, the dual code $\mathcal{C}$ has $p^\ell$ codewords, where $k + \ell = mn$, i.e., $\vert \mathcal{C} \vert \cdot \vert \mathcal{C}^{\perp} \vert =\vert \Re \vert ^n$.
    \end{proposition}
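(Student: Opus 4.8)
\noindent The plan is to handle the two assertions separately. The first is a counting triviality: since $|\Re^n| = |\Re|^n = p^{mn}$, the group $(\Re^n,+)$ is a finite abelian $p$-group, and a linear code $\mathcal{C}$ is in particular an additive subgroup, so by Lagrange's theorem $|\mathcal{C}|$ divides $p^{mn}$; hence $|\mathcal{C}| = p^k$ for some $k\in\{0,1,\dots,mn\}$.

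For the identity $|\mathcal{C}|\cdot|\mathcal{C}^\perp| = |\Re|^n$ I would use that a finite chain ring is Frobenius. Write the maximal ideal of $\Re$ as $\langle\gamma\rangle$ with nilpotency index $t$ and residue field $\mathbb{F}_q$, so the ideals of $\Re$ form the chain $\Re\supsetneq\langle\gamma\rangle\supsetneq\cdots\supsetneq\langle\gamma^{t-1}\rangle\supsetneq\langle\gamma^t\rangle = 0$ and $|\Re| = q^t = p^m$. Then $S := \langle\gamma^{t-1}\rangle$ is the unique minimal ideal and lies inside every nonzero ideal. Pick an additive character $\chi$ of $(\Re,+)$ that is nontrivial on $S$ (possible because $S\neq 0$ and characters of a subgroup of a finite abelian group extend to the whole group); then $\ker\chi$ contains no nonzero ideal of $\Re$. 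Define $\Phi\colon\Re^n\to\widehat{\Re^n}$ by $\Phi(y)(x) = \chi(x\cdot y)$, where $x\cdot y = \sum_{i}x_iy_i$. If $\Phi(y)$ is trivial then, testing against $x = (0,\dots,b,\dots,0)$, we get $\chi(by_i) = 1$ for all $b\in\Re$ and all $i$, so each $y_i\Re\subseteq\ker\chi$, forcing $y_i = 0$; thus $\Phi$ is injective, and since $|\widehat{\Re^n}| = |\Re^n|$ it is an isomorphism.

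Next I would show $\Phi(\mathcal{C}^\perp) = \mathcal{C}^{\circ} := \{\psi\in\widehat{\Re^n} : \psi|_{\mathcal{C}} = 1\}$. The inclusion $\subseteq$ is clear, since $x\cdot y = 0$ implies $\chi(x\cdot y) = 1$. Conversely, if $\chi(x\cdot y) = 1$ for all $x\in\mathcal{C}$, then because $\mathcal{C}$ is an $\Re$-module we have $rx\in\mathcal{C}$ for every $r\in\Re$, so $\chi(r(x\cdot y)) = 1$ for all $r$, i.e. $(x\cdot y)\Re\subseteq\ker\chi$, whence $x\cdot y = 0$ and $y\in\mathcal{C}^\perp$. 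Then by the standard annihilator duality for finite abelian groups the restriction $\widehat{\Re^n}\to\widehat{\mathcal{C}}$ is onto with kernel $\mathcal{C}^{\circ}$, so $|\mathcal{C}^{\circ}| = |\widehat{\Re^n}|/|\widehat{\mathcal{C}}| = |\Re|^n/|\mathcal{C}|$. Combining, $|\mathcal{C}|\cdot|\mathcal{C}^\perp| = |\mathcal{C}|\cdot|\mathcal{C}^{\circ}| = |\Re|^n = p^{mn}$; since $|\mathcal{C}| = p^k$ this gives $|\mathcal{C}^\perp| = p^{\ell}$ with $\ell = mn - k$, that is $k + \ell = mn$.

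The only non-formal step is the Frobenius property of $\Re$, namely the existence of the generating character $\chi$ whose kernel meets no nonzero ideal; this is precisely where the chain hypothesis enters, through the uniqueness of the minimal ideal $S$. An alternative, character-free route is to invoke the standard form of codes over chain rings: $\mathcal{C}$ is permutation-equivalent to a code with a row-echelon generator matrix of a well-defined type $(k_0,\dots,k_{t-1})$, so $|\mathcal{C}| = q^{\sum_{i=0}^{t-1}(t-i)k_i}$; one then exhibits a generator matrix for $\mathcal{C}^\perp$, reads off its type, and checks the two exponents sum to $tn$. That works as well, but putting the dual into standard form is the fiddly part, so I would favour the character-theoretic argument.
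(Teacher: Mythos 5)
Your argument is correct. Note, however, that the paper does not prove this statement at all: it is quoted verbatim from the cited reference \cite{dinh2004cyclic}, so there is no in-paper proof to match against. Your first paragraph (Lagrange in the $p$-group $(\Re^n,+)$) is the right one-line argument for $|\mathcal{C}|=p^k$. For the duality identity you give the generating-character (Frobenius ring) proof: the key points — that every nonzero ideal of a chain ring contains the unique minimal ideal $\langle\gamma^{t-1}\rangle$, hence a character nontrivial there has kernel meeting no nonzero ideal; that $\Phi(y)=\chi(\,\cdot\, y)$ is then an isomorphism $\Re^n\to\widehat{\Re^n}$ carrying $\mathcal{C}^\perp$ onto the annihilator $\mathcal{C}^\circ$ (using $\Re$-linearity of $\mathcal{C}$ for the reverse inclusion); and that $|\mathcal{C}^\circ|=|\Re|^n/|\mathcal{C}|$ by restriction of characters — are all correctly executed. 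This differs from the route usually taken for chain rings (and in the cited source's tradition), namely putting $\mathcal{C}$ into standard form with type $(k_0,\dots,k_{t-1})$, computing $|\mathcal{C}|=q^{\sum_i(t-i)k_i}$, and exhibiting a generator matrix for $\mathcal{C}^\perp$ of the complementary type; you mention this alternative yourself. The trade-off is as you say: the character argument is cleaner, avoids the fiddly dual standard form, and generalizes to arbitrary finite Frobenius rings, while the standard-form argument yields the finer invariant (the type of $\mathcal{C}^\perp$), not just its cardinality. Either is acceptable; no gaps.
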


Let $p \neq 3$ be any prime number and $m$ be a positive integer. Let $\mathcal{R} = \frac{\mathbb{F}_{p^m}[u, v]}{\langle u^2, v^2, uv-vu\rangle}=\mathbb{F}_{p^m} + u\mathbb{F}_{p^m} + v\mathbb{F}_{p^m} +uv\mathbb{F}_{p^m}$ where $u^2 = 0$,  $v^2 = 0$,  $uv = vu$. An element $\alpha=\alpha_1 + \alpha_2 u + \alpha_3 v + \alpha_4 uv \in \mathcal{R}$ is a unit if and only if $\alpha_1$ is nonzero in $\mathbb{F}_{p^m}$. Let $\alpha_1 \in \mathbb{F}^{*}_{p^m}$. Now $\alpha_1^{p^{tm}}=\alpha_1$ for any positive integer $t$. For positive integers $m$ and $s$,  there exist non-negative integers $q_0$ and $r_0$ such that $s= q_0 m+r_0$ with $0\leq r_0 \leq m-1$, by the division algorithm. Let $\alpha_0=\alpha_1^{p^{(q_0+1)m-s}}=\alpha_1^{p^{m-r_0}}$. Then ${\alpha_0}^{p^{s}}=\alpha_1^{p^{(q_0+1)m}}=\alpha_1$.

  \noindent For any unit $\alpha$ of $\mathcal{R}$, let 
  \begin{center}
        $\mathcal{R}_\alpha=\frac{\mathcal{R}[x]}{\langle x^{3p^s}-\alpha \rangle}$.
  \end{center}
  It follows from Proposition\ref{1} that $\alpha$-constacyclic codes of length $3p^s$ over $\mathcal{R}$ are ideals
of $\mathcal{R}_\alpha$.

    The annihilator of an ideal $I$ of a ring $\mathcal{R}_\alpha$ is also an ideal and is defined as:
    $\mathcal{A}(I) = \{a \in \mathcal{R}_\alpha | ab = 0,  \text{ for all } b \in I\}$. For any polynomial $f (x) = m_0 + m_1x +\cdots+ m_kx^k \in \mathcal{R}[x]$,  where $m_k\neq 0$,  the reciprocal polynomial of $f (x)$ is defined as $f (x)^* =x^kf(x^{-1})= m_k + m_{k-1}x +\cdots+ m_0x^k $. If $I$ is an ideal of $\mathcal{R}_\alpha$,  then it is easy to show that $I^* = \{f^*(x)|f (x) \in I\}$ is also an ideal of $\mathcal{R}_\alpha$.
    \begin{proposition}\cite{chen2016constacyclic}
        Let $\mathcal{C}$ be a constacyclic code of length $n$ over $\mathcal{R}$. Then the dual $\mathcal{C}^{\perp}$ of $\mathcal{C}$ is $\mathcal{A}(\mathcal{C})^*$.
    \end{proposition}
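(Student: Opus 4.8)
The plan is to produce an explicit bijection between $\mathcal{A}(\mathcal{C})$ and $\mathcal{C}^{\perp}$ realised by reciprocation, with the whole argument resting on one identity that links the Euclidean inner product on $\mathcal{R}^{n}$ with multiplication in the ambient ring $\mathcal{R}[x]/\langle x^{n}-\alpha\rangle$ (here $\mathcal{C}$ is an $\alpha$-constacyclic code of length $n$, i.e.\ an ideal of this ring). First I would record that identity: for $f(x)=\sum_{i=0}^{n-1}f_{i}x^{i}$ and $g(x)=\sum_{j=0}^{n-1}g_{j}x^{j}$, reducing $f(x)g(x)$ modulo $x^{n}-\alpha$ and noting that $0\le i+j\le 2n-2$, only the pairs with $i+j=n-1$ can contribute to the coefficient of $x^{n-1}$; that coefficient therefore equals $\sum_{i}f_{i}g_{n-1-i}=\langle\rho(f),g\rangle$, where $\rho$ denotes reversal of the length-$n$ coordinate vector, $\rho(f)=(f_{n-1},\dots,f_{1},f_{0})$, an involution on $\mathcal{R}^{n}$. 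This is the only real computation, and it is routine; I would also note that $x$ is a unit in $\mathcal{R}[x]/\langle x^{n}-\alpha\rangle$ since $x\cdot\alpha^{-1}x^{n-1}=1$, and that nothing in the argument uses the non-chain structure of $\mathcal{R}$.

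Using this identity I would then establish $\rho\!\left(\mathcal{A}(\mathcal{C})\right)=\mathcal{C}^{\perp}$. For the inclusion $\subseteq$: if $a\in\mathcal{A}(\mathcal{C})$ then $a(x)b(x)=0$ in the ambient ring for every $b\in\mathcal{C}$, so in particular its coefficient of $x^{n-1}$ vanishes, i.e.\ $\langle\rho(a),b\rangle=0$ for all $b\in\mathcal{C}$, hence $\rho(a)\in\mathcal{C}^{\perp}$. For $\supseteq$: given $c\in\mathcal{C}^{\perp}$, set $a=\rho(c)$, so $\langle\rho(a),b\rangle=0$ for all $b\in\mathcal{C}$; since $\mathcal{C}$ is an ideal it is closed under multiplication by every power of $x$, so that vanishing also holds with $b$ replaced by $x^{k}b$ for $k=0,\dots,n-1$. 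Feeding each of these products through the coefficient identity forces every coefficient of $a(x)b(x)$ to be zero, hence $a(x)b(x)=0$ in the ambient ring for all $b\in\mathcal{C}$, so $a\in\mathcal{A}(\mathcal{C})$ and $c=\rho(a)\in\rho\!\left(\mathcal{A}(\mathcal{C})\right)$.

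The last step is to pass from the coordinate reversal $\rho$ to the reciprocal polynomial $f^{*}$ used in the statement. A direct check gives $\rho(f)(x)=x^{\,n-1-\deg f}f^{*}(x)$ in $\mathcal{R}[x]$, so in the ambient ring $\rho(f)$ and $f^{*}$ differ only by the unit $x^{\,n-1-\deg f}$. Since $\mathcal{A}(\mathcal{C})^{*}$ is an ideal of $\mathcal{R}[x]/\langle x^{n}-\alpha^{-1}\rangle$ (noted in the preliminaries) and $\mathcal{C}^{\perp}=\rho\!\left(\mathcal{A}(\mathcal{C})\right)$ is also an ideal of that ring (because the dual of an $\alpha$-constacyclic code is $\alpha^{-1}$-constacyclic), both sets are closed under multiplication by powers of $x$, which together with the displayed relation gives $\rho\!\left(\mathcal{A}(\mathcal{C})\right)=\mathcal{A}(\mathcal{C})^{*}$. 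Chaining the two equalities then yields $\mathcal{C}^{\perp}=\mathcal{A}(\mathcal{C})^{*}$.

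I expect the only delicate point — more a matter of care than of difficulty — to be this reconciliation of $\rho$ with $f^{*}$, since the degree of $f^{*}$ depends on whether the degree-$(n-1)$ representative of $f$ has a nonzero leading coefficient; the invertibility of $x$ in the ambient ring is precisely what renders this harmless. As an alternative one could prove only $\mathcal{A}(\mathcal{C})^{*}\subseteq\mathcal{C}^{\perp}$ and then close the gap with the cardinality identities $|\mathcal{C}|\cdot|\mathcal{C}^{\perp}|=|\mathcal{R}|^{n}$ and $|\mathcal{A}(\mathcal{C})^{*}|=|\mathcal{A}(\mathcal{C})|$, but the direct bijection above is cleaner and self-contained.
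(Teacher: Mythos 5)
The paper does not prove this proposition at all: it is imported verbatim from the cited reference \cite{chen2016constacyclic} and used as a black box, so there is no in-paper argument to compare yours against. Your proof is correct and is essentially the standard argument from the literature: the key coefficient identity (that the coefficient of $x^{n-1}$ in $f(x)g(x)$ modulo $x^{n}-\alpha$ equals $\langle\rho(f),g\rangle$, valid because $i+j\le 2n-2$ rules out wrap-around contributions to that coefficient), the two inclusions giving $\rho(\mathcal{A}(\mathcal{C}))=\mathcal{C}^{\perp}$ (with the ideal property of $\mathcal{C}$ supplying the shifted conditions $\langle c,x^{k}b\rangle=0$ that recover every coefficient of $a(x)b(x)$), and finally the reconciliation $\rho(f)(x)=x^{\,n-1-\deg f}f^{*}(x)$ together with the invertibility of $x$ to replace $\rho(\mathcal{A}(\mathcal{C}))$ by $\mathcal{A}(\mathcal{C})^{*}$. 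All three steps check out; the one point worth stating more carefully is that $\mathcal{A}(\mathcal{C})$ is an ideal of $\mathcal{R}[x]/\langle x^{n}-\alpha\rangle$ while $\mathcal{A}(\mathcal{C})^{*}$ and $\mathcal{C}^{\perp}$ live in $\mathcal{R}[x]/\langle x^{n}-\alpha^{-1}\rangle$ (the paper's preliminaries blur this by calling $I^{*}$ an ideal of $\mathcal{R}_{\alpha}$), but since your polynomial identity $\rho(f)=x^{\,n-1-\deg f}f^{*}$ holds in $\mathcal{R}[x]$ before any reduction and $x$ is a unit in the latter quotient, your final step is sound as written.
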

	\begin{lemma}\cite{chen2016constacyclic}\label{2.6}
        \begin{enumerate}
            \item If $deg(g(x)) \geq deg(h(x))$,  then $(g(x) + h(x))^{*} =g^{*}(x) + x^{deg(g(x))-deg(h(x))}h^{*}(x).$
            \item  $(g(x)h(x))^{*}=g^*(x)h^{*}(x).$
        \end{enumerate}
    \end{lemma}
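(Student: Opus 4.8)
The plan is to prove both identities by directly unwinding the definition $f^{*}(x) = x^{\deg f}\, f(x^{-1})$ and keeping careful track of the powers of $x$. Set $a = \deg(g(x))$ and $b = \deg(h(x))$, so that $g^{*}(x) = x^{a} g(x^{-1})$ and $h^{*}(x) = x^{b} h(x^{-1})$ by definition.

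For part (1), assume $a \geq b$. Then $x^{a-b}h^{*}(x) = x^{a-b}\cdot x^{b} h(x^{-1}) = x^{a} h(x^{-1})$ is again a polynomial, since $a - b \geq 0$, and adding $g^{*}(x)$ gives $x^{a}\bigl(g(x^{-1}) + h(x^{-1})\bigr) = x^{a}(g+h)(x^{-1})$. Because $\deg(g(x)+h(x)) = a$ in the situations where the identity is invoked (this is automatic when $a > b$, and when $a = b$ one assumes the leading coefficients do not cancel), the last expression is exactly $(g+h)^{*}(x)$, which proves the claim. Part (2) is the same type of manipulation: $(g(x)h(x))^{*} = x^{a+b}(gh)(x^{-1}) = \bigl(x^{a}g(x^{-1})\bigr)\bigl(x^{b}h(x^{-1})\bigr) = g^{*}(x)h^{*}(x)$, using that $\deg(g(x)h(x)) = a + b$.

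The only point that needs a word of justification rather than a mechanical computation is the degree bookkeeping: since the reciprocal polynomial is defined via the \emph{actual} degree, part (1) requires $\deg(g+h) = \max\{a, b\}$ and part (2) requires $\deg(gh) = a + b$. Over the ring $\mathcal{R}$, which has zero divisors, the product of two leading coefficients can be zero, so the second identity is not formally valid for completely arbitrary $g, h$; however, in every use of this lemma in the paper the relevant leading coefficients are units (or at least multiply to a nonzero element), so both degree identities hold and the short computations above are legitimate. Everything else is routine bookkeeping with the definition.
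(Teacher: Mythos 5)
The paper states this lemma without proof, simply citing Chen et al.\ \cite{chen2016constacyclic}, so there is no in-paper argument to compare against; your direct unwinding of the definition $f^{*}(x)=x^{\deg f}f(x^{-1})$ is the standard verification and is correct. Your caveat is also the right one to flag: both identities silently assume $\deg(g+h)=\max\{\deg g,\deg h\}$ and $\deg(gh)=\deg g+\deg h$, which can fail over $\mathcal{R}$ because of zero divisors (and, for part (1) with equal degrees, because of cancellation), but holds in every application the paper makes of the lemma.
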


   \noindent The following lemma will be useful in our analysis. 
   
    \begin{lemma}\cite{dinh2020constacyclic3ps}\label{3}
        Let $I = \langle g(x),  uh(x)\rangle $ be an ideal of $\frac{(\mathbb{F}_{p^m} + u\mathbb{F}_{p^m})[x]}{\langle x^{3p^s}-\alpha \rangle}$. Then $I^*=\{f^*(x)| f(x) \in I\}=\langle g ^*(x),  uh^*(x) \rangle$.
    \end{lemma}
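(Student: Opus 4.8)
The plan is to prove the set equality by establishing the two inclusions $\langle g^{*}(x),uh^{*}(x)\rangle\subseteq I^{*}$ and $I^{*}\subseteq\langle g^{*}(x),uh^{*}(x)\rangle$, with Lemma~\ref{2.6} and the fact that $x$ is a unit in the ambient quotient ring (since $x\cdot x^{3p^s-1}=\alpha$ is a unit) doing most of the work. Two preliminary observations streamline everything. First, if a polynomial $h(x)$ has leading coefficient not annihilated by $u$ (which holds for the generators occurring in the ideal classification), then $\deg\big(uh(x)\big)=\deg h(x)$, so $\big(uh(x)\big)^{*}=u\,h^{*}(x)$; if instead $u\,b(x)h(x)=0$ the corresponding term simply drops out. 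Second, premultiplying a polynomial by $x^{j}$ before reciprocating changes the reciprocal only by the unit $x^{j}$ inside the quotient ring, so powers of $x$ picked up along the way never affect ideal membership.

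For the inclusion $\langle g^{*}(x),uh^{*}(x)\rangle\subseteq I^{*}$, note that $g(x)\in I$ and $uh(x)\in I$, hence $g^{*}(x)=\big(g(x)\big)^{*}\in I^{*}$ and $u\,h^{*}(x)=\big(uh(x)\big)^{*}\in I^{*}$ directly from the definition of $I^{*}$. Since $I^{*}$ is itself an ideal of the relevant quotient ring — by the same verification quoted in Section~2 for ideals of $\mathcal{R}_{\alpha}$ — the ideal generated by $g^{*}(x)$ and $uh^{*}(x)$ is contained in $I^{*}$.

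For the reverse inclusion, take an arbitrary $f(x)\in I$ with $\deg f(x)<3p^{s}$ and write, as polynomials over $\mathbb{F}_{p^m}+u\mathbb{F}_{p^m}$,
\[
f(x)=a(x)g(x)+u\,b(x)h(x)+q(x)\big(x^{3p^{s}}-\alpha\big),
\]
using the correspondence between the ideals of the quotient and the ideals of the polynomial ring containing $x^{3p^{s}}-\alpha$. Applying Lemma~\ref{2.6}(1) repeatedly to this sum, and Lemma~\ref{2.6}(2) to each product, expresses $f^{*}(x)$ as a combination of $a^{*}(x)g^{*}(x)$, $u\,b^{*}(x)h^{*}(x)$, and $q^{*}(x)\big(x^{3p^{s}}-\alpha\big)^{*}$, each multiplied by some power of $x$; since $\big(x^{3p^{s}}-\alpha\big)^{*}=1-\alpha x^{3p^{s}}$ is, up to a unit, the defining relation of the ambient ring in which the dual code lives, that last contribution vanishes there, leaving $f^{*}(x)\in\langle g^{*}(x),uh^{*}(x)\rangle$.

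The genuinely delicate point, and the one to be careful about, is that $f\mapsto f^{*}$ is not additive: $(f_{1}+f_{2})^{*}$ depends on $\deg f_{1}$ versus $\deg f_{2}$, and cancellation of leading terms in the expression $a(x)g(x)+u\,b(x)h(x)+q(x)(x^{3p^{s}}-\alpha)$ can drop the degree below the maximum of the summands' degrees. I would handle this by a case analysis according to which summand controls each successively appearing leading coefficient, each time invoking Lemma~\ref{2.6}(1) in the form matching the degree comparison and absorbing the extra $x^{j}$ factors via the invertibility of $x$; together with the degenerate cases noted above, this keeps $f^{*}(x)$ inside $\langle g^{*}(x),uh^{*}(x)\rangle$ and, combined with the first inclusion, finishes the proof.
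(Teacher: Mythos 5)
The paper does not prove this lemma at all: it is quoted from \cite{dinh2020constacyclic3ps} and used as a black box, so there is no in-paper argument to compare yours against. Judged on its own, your proposal is essentially correct and is the standard argument for results of this type. The forward inclusion is immediate once one accepts that $I^*$ is an ideal (which the paper asserts in Section 2), together with your observation that $(uh(x))^*$ and $u\,h^*(x)$ agree up to a power of $x$, which is a unit in the quotient since $x\cdot x^{3p^s-1}=\alpha$. For the reverse inclusion, lifting $f=a g+u b h+q\,(x^{3p^s}-\alpha)$ to the polynomial ring and reciprocating is the right move, and you correctly identify both the role of $(x^{3p^s}-\alpha)^*=1-\alpha x^{3p^s}=-\alpha(x^{3p^s}-\alpha^{-1})$, which dies in the ambient ring of the dual, and the genuine delicacy: $*$ is not additive, and Lemma~\ref{2.6}(1) as stated silently assumes no cancellation of leading terms.

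Your proposed fix (case analysis on which summand carries the leading coefficient, absorbing stray powers of $x$ as units) does work, but it can be packaged more cleanly and without any cases: set $N$ equal to the maximum of the degrees of the three summands and consider $\tilde f(x)=x^{N}f(1/x)$. This operation \emph{is} additive and multiplicative up to explicit nonnegative powers of $x$, it differs from $f^*$ by the unit factor $x^{\,N-\deg f}$, and it immediately yields $\tilde f=x^{j_1}a^*g^*+x^{j_2}u\,b^*h^*+x^{j_3}q^*(1-\alpha x^{3p^s})$ with $j_i\ge 0$, whence $f^*\in\langle g^*(x),u h^*(x)\rangle$. Adopting that formulation would turn your sketch of the delicate step into a complete one-line computation; as written, the step is correctly diagnosed but only outlined.
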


   In analyzing constacyclic codes over $\mathcal{R}$, we find it useful to consider two main cases for the unit $\alpha$. One of the cases is easily analysed via the Chinese Remainder Theorem.  The other case is subdivided into eight subcases. We consider three of them in this paper.
   

\section{  $\alpha$ in the form of a cube in $\mathcal{R}$} 
    \noindent  Consider a unit $\alpha$ of $\mathbb{F}_{p^m}$. When $\alpha$ is cube in $\mathcal{R}$ we determine the $\alpha$-constacyclic code of length $3p^s$. As $\alpha$ is a cube in $\mathcal{R}$ there exists $\beta \in \mathcal{R}$ such that $\alpha=\beta^3$. In this section, we consider the following two cases.
    \subsection{Suppose $p^m \equiv 1 (\mod{3})$}
    Now 
    \begin{align*}
        x^{3p^s}-\alpha &=x^{3p^s}-\beta^3\\
        &=(x^{p^s}-\beta)(x^{2p^s}+\beta x^{p^s}+\beta^2)
    \end{align*}
    \begin{lemma} \label{3.1}\cite{dinh2020constacyclic3ps}
        Let $p^m \equiv 1 (\mod3).$ Then there exist $\delta, \gamma \in \mathbb{F}_{p^m}$ such that $\gamma \delta=1$ and $\delta+\gamma=-1$ 
    \end{lemma}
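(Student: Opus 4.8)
The plan is to show that the quadratic $t^2 + t + 1$ splits over $\mathbb{F}_{p^m}$ when $p^m \equiv 1 \pmod 3$, and then read off $\delta, \gamma$ as its two roots. Concretely, if $t^2 + t + 1 = (t - \delta)(t - \gamma)$, comparing coefficients gives $\delta + \gamma = -1$ and $\delta\gamma = 1$, which is exactly what is claimed. So the entire content of the lemma is that $t^2 + t + 1$ has a root in $\mathbb{F}_{p^m}$.

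The first step is to observe that $t^2 + t + 1 = \frac{t^3 - 1}{t - 1}$, so its roots are precisely the primitive cube roots of unity. Hence the lemma reduces to: $\mathbb{F}_{p^m}$ contains a primitive cube root of unity. For this I would use the fact that the multiplicative group $\mathbb{F}_{p^m}^{*}$ is cyclic of order $p^m - 1$. Since $p \neq 3$, we have $\gcd(3, p) = 1$, and the hypothesis $p^m \equiv 1 \pmod 3$ means $3 \mid p^m - 1$. A cyclic group of order $p^m - 1$ has a (unique) subgroup of order $3$ exactly when $3 \mid p^m - 1$; a generator of that subgroup is an element $\omega$ with $\omega^3 = 1$ and $\omega \neq 1$, i.e.\ a primitive cube root of unity.

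Having produced such an $\omega$, set $\delta = \omega$ and $\gamma = \omega^2$. Then $\delta \gamma = \omega^3 = 1$, and since $\omega$ is a root of $t^2 + t + 1$ we get $\omega^2 + \omega + 1 = 0$, i.e.\ $\delta + \gamma = \omega + \omega^2 = -1$. Both elements lie in $\mathbb{F}_{p^m}$ because $\omega$ does and $\mathbb{F}_{p^m}$ is closed under multiplication. This completes the argument.

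The only point requiring a little care — the ``main obstacle,'' though it is mild — is justifying the existence of the order-$3$ element purely from $3 \mid p^m - 1$; this is immediate from the cyclicity of $\mathbb{F}_{p^m}^{*}$, so no factorization-of-polynomials machinery is actually needed. One could alternatively phrase the whole proof over $\mathbb{F}_{p^m}$ by noting that $x^{p^m} - x$ splits completely, that $t^2 + t + 1 \mid t^3 - 1 \mid t^{p^m - 1} - 1 \mid t^{p^m} - t$ when $3 \mid p^m - 1$, and that a polynomial dividing $t^{p^m} - t$ has all its roots in $\mathbb{F}_{p^m}$; but the cyclic-group argument is shorter and self-contained. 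Since this lemma is cited from \cite{dinh2020constacyclic3ps}, a brief remark to this effect suffices.
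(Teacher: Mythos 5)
Your proof is correct; the paper cites this lemma from \cite{dinh2020constacyclic3ps} without reproducing a proof, and your argument (use cyclicity of $\mathbb{F}_{p^m}^{*}$ and $3\mid p^m-1$ to get a primitive cube root of unity $\omega$, then take $\delta=\omega$, $\gamma=\omega^2$) is exactly the standard one given there. No gaps.
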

    \noindent By Lemma \ref{3.1}, we have 
    \begin{equation*}
        x^{3p^s}-\alpha =(x^{p^s}-\beta)(x^{p^s}-\delta\beta)(x^{p^s}-\gamma\beta).
    \end{equation*}
    Then by the Chinese Remainder Theorem, 
    \begin{equation*}
        \frac{\mathcal{R}[x]}{\langle x^{3p^s}-\alpha \rangle} \cong \frac{\mathcal{R}[x]}{\langle x^{p^s}-\beta  \rangle}\oplus\frac{\mathcal{R}[x]}{\langle x^{p^s}-\delta \beta \rangle}\oplus\frac{\mathcal{R}[x]}{\langle x^{p^s}-\gamma \beta \rangle}.
    \end{equation*}
    This means that a direct sum of a $\beta$-constacyclic code, $\delta\beta$-constacyclic code and $\gamma\beta$-constacyclic code of length $p^s$ over $\mathcal{R}$  can be used to represent each $\alpha$-constacyclic code of length $3p^s$. Note that the algebraic structures of all constacyclic codes of length $p^s$ over $\mathcal{R}$ are determined in \cite{dinh2020constacyclicps}. 
    
    \subsection{Suppose $p^m \equiv 2 (\mod{3})$}
    \begin{lemma}\cite{dinh2020constacyclic3ps}\label{3.2}
        For every $c \in \mathbb{F}_{p^m}\backslash \{0\}$, the polynomial $x^2+cx+c^2$ is irreducible in $\mathbb{F}_{p^m}[x]$.
    \end{lemma}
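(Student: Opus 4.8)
The plan is to exploit the factorization
$x^{3} - c^{3} = (x - c)(x^{2} + cx + c^{2})$ in $\mathbb{F}_{p^m}[x]$, which identifies the roots of $x^{2} + cx + c^{2}$ (in an algebraic closure of $\mathbb{F}_{p^m}$) as precisely the two cube roots of $c^{3}$ that are distinct from $c$. Since $x^{2} + cx + c^{2}$ has degree $2$, it is irreducible over $\mathbb{F}_{p^m}$ if and only if it has no root in $\mathbb{F}_{p^m}$, so the whole task reduces to ruling out such a root.

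First I would argue by contradiction: suppose $r \in \mathbb{F}_{p^m}$ is a root. Then $r^{3} = c^{3}$ with $r \neq c$ (indeed $r = c$ would force $3c^{2} = 0$, impossible since $p \neq 3$ and $c \neq 0$), and hence $\omega := r c^{-1} \in \mathbb{F}_{p^m}^{*}$ satisfies $\omega^{3} = 1$ and $\omega \neq 1$; that is, $\omega$ has multiplicative order exactly $3$.

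Next I would invoke Lagrange's theorem for the cyclic group $\mathbb{F}_{p^m}^{*}$ of order $p^m - 1$: the existence of an element of order $3$ forces $3 \mid p^m - 1$, i.e.\ $p^m \equiv 1 \pmod{3}$. This contradicts the standing hypothesis $p^m \equiv 2 \pmod{3}$, and the lemma follows.

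I do not anticipate a genuine obstacle here; the one point worth stating explicitly is that the argument is characteristic-free among primes $p \neq 3$ (in particular it covers $p = 2$, where the usual ``discriminant $-3c^{2}$ is a non-square'' criterion would need separate treatment), since we never divide by $2$ — we only use that $x^{2} + cx + c^{2}$ is a quadratic and that cube roots of unity in $\mathbb{F}_{p^m}$ correspond to order-$3$ subgroups of $\mathbb{F}_{p^m}^{*}$. An equivalent packaging, which I could use instead, is the substitution $x = cy$, giving $x^{2} + cx + c^{2} = c^{2}(y^{2} + y + 1) = c^{2}\Phi_{3}(y)$, thereby reducing the statement to the standard fact that the third cyclotomic polynomial $\Phi_{3}$ is irreducible over $\mathbb{F}_{p^m}$ exactly when $p^m \not\equiv 1 \pmod{3}$.
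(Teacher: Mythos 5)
Your argument is correct and complete: reducing irreducibility of a quadratic to the absence of a root, identifying any root $r\in\mathbb{F}_{p^m}$ of $x^2+cx+c^2$ as a cube root of $c^3$ with $r\neq c$ (ruled out by $3c^2\neq 0$ since $p\neq 3$ and $c\neq 0$), and then deriving an element of order $3$ in $\mathbb{F}_{p^m}^{*}$, which contradicts the standing hypothesis $p^m\equiv 2\pmod 3$ of this subsection via Lagrange's theorem. Note that the paper does not prove this lemma at all --- it is imported verbatim from \cite{dinh2020constacyclic3ps} --- so there is no in-paper proof to compare against; your cube-roots-of-unity argument (equivalently, the reduction to $\Phi_3$ after the substitution $x=cy$) is the standard one and is valid in all characteristics $p\neq 3$, including $p=2$ where a discriminant argument would not directly apply.
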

    \begin{lemma}
        For every $c \in \mathbb{F}_{p^m}\backslash \{0\}$, the polynomial $x^2+cx+c^2$ is irreducible in $\mathcal{R}[x]$. 
    \end{lemma}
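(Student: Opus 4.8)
The plan is to pass to the residue field of $\mathcal{R}$ and invoke Lemma~\ref{3.2}. Recall from the preliminaries that an element of $\mathcal{R}$ is a unit precisely when its $\mathbb{F}_{p^m}$-part is nonzero; hence $\mathcal{R}$ is a finite commutative local ring whose unique maximal ideal is $\mathfrak{m}=\langle u,v\rangle$, with $\mathcal{R}/\mathfrak{m}\cong\mathbb{F}_{p^m}$. Moreover $\mathfrak{m}^{3}=0$ (since $u^{2}=v^{2}=0$ forces $\mathfrak{m}^{2}=\langle uv\rangle$ and $\mathfrak{m}^{3}=0$), so every element of $\mathfrak{m}$ is nilpotent, and $\mathfrak{m}$ is exactly the set of non-units of $\mathcal{R}$. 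Write $\pi\colon\mathcal{R}[x]\to\mathbb{F}_{p^m}[x]$ for the ring homomorphism reducing each coefficient modulo $\mathfrak{m}$; then $\pi(x^{2}+cx+c^{2})=x^{2}+cx+c^{2}$, which is irreducible over $\mathbb{F}_{p^m}$ by Lemma~\ref{3.2} (using $c\neq 0$).

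Now suppose, towards a contradiction, that $x^{2}+cx+c^{2}=g(x)h(x)$ in $\mathcal{R}[x]$ with neither $g$ nor $h$ a unit of $\mathcal{R}[x]$. Applying $\pi$ gives $x^{2}+cx+c^{2}=\pi(g)\pi(h)$ in $\mathbb{F}_{p^m}[x]$. Since $x^{2}+cx+c^{2}$ is irreducible there, one of these factors, say $\pi(h)$, is a nonzero constant; hence $h=h_{0}+h_{1}x+\dots+h_{e}x^{e}$ with $h_{0}\notin\mathfrak{m}$ (so $h_{0}$ is a unit of $\mathcal{R}$) and $h_{1},\dots,h_{e}\in\mathfrak{m}$. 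Because $h_{1},\dots,h_{e}$ are nilpotent, the polynomial $h_{1}x+\dots+h_{e}x^{e}$ is nilpotent in $\mathcal{R}[x]$, so $h=h_{0}\bigl(1+h_{0}^{-1}(h_{1}x+\dots+h_{e}x^{e})\bigr)$ is a product of two units of $\mathcal{R}[x]$, hence a unit of $\mathcal{R}[x]$. This contradicts the choice of $h$, so no such factorization exists and $x^{2}+cx+c^{2}$ is irreducible in $\mathcal{R}[x]$.

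The one point that needs care — and the reason the argument is phrased through units of $\mathcal{R}[x]$ rather than through roots — is that over a ring with nonzero nilpotents a reducible monic polynomial need not have monic factors: the degree of a factor can drop under reduction modulo $\mathfrak{m}$, so one cannot simply say ``if $x^{2}+cx+c^{2}$ were reducible it would have a root $a\in\mathcal{R}$, whence $a\bmod\mathfrak{m}$ would be a root in $\mathbb{F}_{p^m}$.'' Combining the reduction map with the description of units of $\mathcal{R}[x]$ (constant term a unit, all other coefficients nilpotent) sidesteps this issue completely. If one only wishes to exclude factorizations into two \emph{monic} polynomials of positive degree, it is even quicker: such factors reduce to monic factors of the same positive degrees, exhibiting $x^{2}+cx+c^{2}$ as a product of two linear polynomials over $\mathbb{F}_{p^m}$, contrary to Lemma~\ref{3.2}. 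No other step presents any real difficulty.
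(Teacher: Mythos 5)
Your proof is correct, and it takes a genuinely different (and in fact more careful) route than the paper. The paper argues via roots: it assumes $x^2+cx+c^2$ is reducible in $\mathcal{R}[x]$, asserts that this yields some $\lambda=\lambda_0+\lambda_1u+\lambda_2v+\lambda_3uv\in\mathcal{R}$ with $\lambda^2+c\lambda+c^2=0$, expands, and reads off that $\lambda_0$ is a root of $x^2+cx+c^2$ in $\mathbb{F}_{p^m}$, contradicting Lemma~\ref{3.2}. That argument silently uses the implication ``reducible monic quadratic $\Rightarrow$ has a root in $\mathcal{R}$,'' which is automatic over a field but, as you correctly point out in your closing paragraph, is not automatic over a ring with nonzero nilpotents, where factors need not be monic and degrees can drop under reduction. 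Your proof instead reduces an arbitrary factorization $x^2+cx+c^2=g(x)h(x)$ modulo the maximal ideal $\mathfrak{m}=\langle u,v\rangle$, uses irreducibility over the residue field $\mathbb{F}_{p^m}$ to force one factor to have unit constant term and nilpotent higher coefficients, and then invokes the standard characterization of units in $\mathcal{R}[x]$ to conclude that factor is a unit. This buys a complete argument for irreducibility in the standard sense (no factorization into two non-units), whereas the paper's proof only rules out factorizations witnessed by a root; the price is the extra input about units of $\mathcal{R}[x]$ and the local structure of $\mathcal{R}$, both of which you justify. No gaps.
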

    \begin{proof}
        Suppose $x^2+cx+c^2$ is reducible polynomial in $\mathcal{R}[x].$ Then there exists $\lambda=\lambda_0 +\lambda_1u+ \lambda_2 v + \lambda_3 uv \in \mathcal{R}$ for some $\lambda_0, \lambda_1, \lambda_2, \lambda_3 \in \mathbb{F}_{p^m}$ such that $\lambda^2+c\lambda+c^2=0.$ Therefore,
        \begin{equation*}
            (\lambda_0 +\lambda_1u+ \lambda_2 v + \lambda_3 uv)^2+c(\lambda_0 +\lambda_1u+ \lambda_2 v + \lambda_3 uv)+c^2=0
            \end{equation*}
            \begin{equation*}(\lambda_0^2+a^2+a\lambda_0)+(2\lambda_0 \lambda_1+a \lambda_1)u+(2\lambda_0 \lambda_2+a \lambda_2)v+(2\lambda_0 \lambda_3+2 \lambda_1 \lambda_2+a \lambda_3)uv=0
        \end{equation*}
    This implies $\lambda_0^2+c^2+c\lambda_0=0$, $2\lambda_0 \lambda_1+c \lambda_1=0$, $2\lambda_0 \lambda_2+c \lambda_2=0$ and $2\lambda_0 \lambda_3+2 \lambda_1 \lambda_2+c \lambda_3=0$. Thus, $\lambda_0$ is a root of $x^2+cx+c^2$, which is a contradiction to the Lemma \ref{3.2}. Thus $x^2+cx+c^2$ is irreducible in $\mathcal{R}[x].$
    \end{proof}

\noindent Therefore  $x^{3p^s}-\alpha =(x^{p^s}-\gamma_0)(x^{2p^s}+\gamma_0x^{p^s}+\gamma_0^2)$. By the Chinese Remainder Theorem, 
    \begin{equation*}
        \frac{\mathcal{R}[x]}{\langle x^{3p^s}-\alpha \rangle} \cong \frac{\mathcal{R}[x]}{\langle x^{p^s}-\gamma_0  \rangle}\oplus\frac{\mathcal{R}[x]}{\langle x^{2p^s}+\gamma_0x^{p^s}+\gamma_0^2\rangle}.
    \end{equation*}

\noindent This means that a direct sum of a $\gamma_0$-constacyclic code of length $p^s$ over $\mathcal{R}$ and ideals of $\frac{\mathcal{R}[x]}{\langle x^{2p^s}+\gamma_0x^{p^s}+\gamma_0^2\rangle}$ can be used to represent each $\alpha$-constacyclic code of length $3p^s$.
    


\section{$\alpha=\alpha_1 + \alpha_3 v + \alpha_4 uv $ is non-cube in $\mathcal{R}$}
    Let $\alpha=\alpha_1 + \alpha_3 v + \alpha_4 uv $ be a non-cube in  $\mathcal{R}$ where $\alpha_1 , \alpha_3, \alpha_4 \in \mathbb{F}^*_{p^m}$. Since $\alpha_1 \neq 0$, $\alpha_1 + \alpha_3 v + \alpha_4 uv $ is a unit in $\mathcal{R}$ and $(\alpha_1 + \alpha_3 v + \alpha_4 uv)$-constacyclic codes of length $3p^s$ over $\mathcal{R}$ are ideals of the quotient ring $R_{\alpha_1,  \alpha_3,  \alpha_4} =\frac{\mathcal{R}[x]}{\langle x^{3p^s}-(\alpha_1 + \alpha_3 v + \alpha_4 uv) \rangle}$.

	\begin{proposition}\label{5}
		$\alpha_1 + \alpha_3 v + \alpha_4 uv $ is non-cube in $\mathcal{R}$ if and only if  $\alpha_1$ is non-cube in $\mathbb{F}_{p^m}$.
	\end{proposition}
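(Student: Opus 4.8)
The plan is to prove the biconditional by handling the two (contrapositive) implications separately, i.e.\ showing that $\alpha = \alpha_1 + \alpha_3 v + \alpha_4 uv$ is a cube in $\mathcal{R}$ if and only if $\alpha_1$ is a cube in $\mathbb{F}_{p^m}$. The easy direction uses the natural surjective ring homomorphism $\pi\colon \mathcal{R} \to \mathbb{F}_{p^m}$ given by reduction modulo the maximal ideal $\langle u, v\rangle$ (which sends $\beta_1 + \beta_2 u + \beta_3 v + \beta_4 uv \mapsto \beta_1$): if $\alpha = \beta^3$ in $\mathcal{R}$, then $\alpha_1 = \pi(\alpha) = \pi(\beta)^3$, so $\alpha_1$ is a cube in $\mathbb{F}_{p^m}$.

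For the converse I would write a candidate cube root as $\beta = \beta_1 + \beta_2 u + \beta_3 v + \beta_4 uv$ with $\beta_1^3 = \alpha_1$ (such $\beta_1$ exists by hypothesis and is nonzero since $\alpha_1 \neq 0$), and compute $\beta^3$ explicitly using $u^2 = v^2 = 0$ and $uv = vu$. Setting $N = \beta_2 u + \beta_3 v + \beta_4 uv$ for the nilpotent part, one gets $N^2 = 2\beta_2\beta_3\, uv$ and $N^3 = 0$, hence
\begin{equation*}
\beta^3 = \beta_1^3 + 3\beta_1^2 N + 3\beta_1 N^2 = \beta_1^3 + 3\beta_1^2\beta_2\, u + 3\beta_1^2\beta_3\, v + \bigl(3\beta_1^2\beta_4 + 6\beta_1\beta_2\beta_3\bigr)uv .
\end{equation*}
Matching coefficients with $\alpha = \alpha_1 + 0\cdot u + \alpha_3 v + \alpha_4 uv$ gives the system $\beta_1^3 = \alpha_1$, $3\beta_1^2\beta_2 = 0$, $3\beta_1^2\beta_3 = \alpha_3$, $3\beta_1^2\beta_4 + 6\beta_1\beta_2\beta_3 = \alpha_4$. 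Since $p \neq 3$, the element $3$ is invertible in $\mathbb{F}_{p^m}$, so with $\beta_1 \neq 0$ the second equation forces $\beta_2 = 0$, and then the last two equations are solved uniquely by $\beta_3 = \alpha_3(3\beta_1^2)^{-1}$ and $\beta_4 = \alpha_4(3\beta_1^2)^{-1}$. Thus $\beta = \beta_1 + \beta_3 v + \beta_4 uv$ is an explicit cube root of $\alpha$, completing the proof.

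I do not expect a genuine obstacle here; the proof is a short direct computation. The two points that merit care are (i) invoking the quotient map $\pi$ to see that the constant term of any cube root of $\alpha$ must itself be a cube root of $\alpha_1$, and (ii) the use of the standing hypothesis $p \neq 3$ to invert $3$, which is exactly what forces $\beta_2 = 0$ and then makes the remaining linear equations solvable. It is also worth noting, as a byproduct of the computation, that whenever $\alpha$ of this shape is a cube, it has a cube root with vanishing $u$-component, so the cube root has the same ``shape'' $\beta_1 + \beta_3 v + \beta_4 uv$ as $\alpha$ itself.
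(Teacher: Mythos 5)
Your proof is correct and follows essentially the same route as the paper's: both directions come down to the explicit expansion $(\beta_1+\beta_2 u+\beta_3 v+\beta_4 uv)^3=\beta_1^3+3\beta_1^2\beta_2 u+3\beta_1^2\beta_3 v+(3\beta_1^2\beta_4+6\beta_1\beta_2\beta_3)uv$ and the invertibility of $3\beta_1^2$ (using $p\neq 3$ and $\beta_1\neq 0$). The only cosmetic difference is that you phrase the easy direction via the reduction homomorphism modulo $\langle u,v\rangle$, where the paper simply compares constant coefficients after expanding the cube.
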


    \begin{proof}
        Suppose $\alpha=\alpha_1 + \alpha_2 u + \alpha_3 v + \alpha_4 uv$ is non-cube in $\mathcal{R}$ and assume $\alpha_1= \alpha_1^{\prime^{3}}\in \mathbb{F}_{p^m}$. Then $\alpha_1 + \alpha_2 u + \alpha_3 v + \alpha_4 uv=(\alpha_1^{\prime} +\alpha_2^{\prime}u+ \alpha_3^{\prime} v + \alpha_4^{\prime} uv)^3$, where $\alpha_2^{\prime}=0,  \alpha_3^{\prime}=3^{-1} \alpha_1^{\prime^{-2}}\alpha_3$ and $\alpha_4^{\prime}=3^{-1} \alpha_1^{\prime^{-2}}\alpha_4$,  a contradiction. 
        
        Conversely, 
        assume that $\alpha$ is cube in $\mathcal{R}$. Then there exists $\alpha_1^{\prime} +\alpha_2^{\prime}u+ \alpha_3^{\prime} v + \alpha_4^{\prime} uv$, where $\alpha_1^{\prime}, \alpha_2^{\prime}, \alpha_3^{\prime}, \alpha_4^{\prime} \in \mathbb{F}_{p^m}$, such that 
        \begin{align*}
            \alpha=&(\alpha_1^{\prime} +\alpha_2^{\prime}u+ \alpha_3^{\prime} v + \alpha_4^{\prime} uv)^3\\
            =&\alpha_1^{\prime^{3}}+(3\alpha_1^{\prime^{2}}\alpha_2^{\prime})u+(3\alpha_1^{\prime^{2}}\alpha_3^{\prime})v+(3\alpha_1^{\prime^{2}}\alpha_4^{\prime}+6\alpha_1^{\prime}\alpha_2^{\prime}\alpha_3^{\prime})uv.
        \end{align*}
        Thus, $\alpha_1 + \alpha_3 v + \alpha_4 uv=\alpha_1^{\prime^{3}}+(3\alpha_1^{\prime^{2}}\alpha_2^{\prime})u+(3\alpha_1^{\prime^{2}}\alpha_3^{\prime})v+(3\alpha_1^{\prime^{2}}\alpha_4^{\prime}+6\alpha_1^{\prime}\alpha_2^{\prime}\alpha_3^{\prime})uv$. By comparing coefficients, we have $\alpha_1= \alpha_1^{\prime^{3}} \in \mathbb{F}_{p^m}$.
    \end{proof}
\begin{remark}
   We have $\alpha_1={\alpha_0}^{p^{s}}$. By Lemma \ref{5}, $\alpha$ is non-cube in $\mathcal{R}$ if and only if $\alpha_0$ is non-cube in $\mathbb{F}_{p^m}$.
\end{remark}
	\begin{proposition}
		Any non zero polynomial $c_1x^2+c_2x+c_3 \in \mathbb{F}_{p^m}[x]$ is a unit in $R_{\alpha_1, \alpha_3, \alpha_4}$.
	\end{proposition}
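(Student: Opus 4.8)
The plan is to reduce the claim to a Bézout identity in $\mathbb{F}_{p^m}[x]$, using the fact that in $R_{\alpha_1,\alpha_3,\alpha_4}$ the element $x^{3p^s}-\alpha_1$ becomes nilpotent. Write $f(x)=c_1x^2+c_2x+c_3\neq 0$ with $c_1,c_2,c_3\in\mathbb{F}_{p^m}$. First I would recall from Proposition~\ref{5} and the Remark following it that, since $\alpha=\alpha_1+\alpha_3v+\alpha_4uv$ is non-cube in $\mathcal{R}$, the scalar $\alpha_0\in\mathbb{F}_{p^m}$ (defined by $\alpha_0^{p^s}=\alpha_1$) is non-cube in $\mathbb{F}_{p^m}$. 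The key algebraic input is then that $x^3-\alpha_0$ is irreducible in $\mathbb{F}_{p^m}[x]$: a cubic over a field is reducible precisely when it has a root, and a root of $x^3-\alpha_0$ would display $\alpha_0$ as a cube in $\mathbb{F}_{p^m}$, contradicting the hypothesis. (Equivalently one may quote the standard criterion that $x^n-a$ is irreducible over a field whenever $a$ has no $\ell$-th root there for each prime $\ell\mid n$.)

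Next, since $\deg f\le 2<3=\deg(x^3-\alpha_0)$ and $f\neq 0$, the irreducible polynomial $x^3-\alpha_0$ does not divide $f$, so $\gcd\!\bigl(f(x),\,x^3-\alpha_0\bigr)=1$; as $x^3-\alpha_0$ is the only irreducible factor of $(x^3-\alpha_0)^{p^s}$, this gives $\gcd\!\bigl(f(x),\,(x^3-\alpha_0)^{p^s}\bigr)=1$ as well. Working in characteristic $p$ we have $(x^3-\alpha_0)^{p^s}=x^{3p^s}-\alpha_0^{p^s}=x^{3p^s}-\alpha_1$, so there exist $a(x),b(x)\in\mathbb{F}_{p^m}[x]\subseteq\mathcal{R}[x]$ with
\[
a(x)\,f(x)+b(x)\bigl(x^{3p^s}-\alpha_1\bigr)=1 .
\]

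Finally I would reduce this identity modulo $\langle x^{3p^s}-\alpha\rangle$. In $R_{\alpha_1,\alpha_3,\alpha_4}$ we have $x^{3p^s}=\alpha=\alpha_1+\alpha_3v+\alpha_4uv$, hence $x^{3p^s}-\alpha_1=\alpha_3v+\alpha_4uv=:\eta$, and $\eta^2=0$ because $v^2=0$. Therefore $b(x)\eta$ is nilpotent in the commutative ring $R_{\alpha_1,\alpha_3,\alpha_4}$, so $a(x)f(x)=1-b(x)\eta$ is a unit with inverse $1+b(x)\eta$. Consequently $f(x)$ is a unit in $R_{\alpha_1,\alpha_3,\alpha_4}$, with explicit inverse $a(x)\bigl(1+b(x)\eta\bigr)$.

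The only genuinely non-formal point is the irreducibility of $x^3-\alpha_0$ for non-cube $\alpha_0$; everything else is bookkeeping with the Frobenius identity and nilpotence. I expect that to be the single step worth spelling out, possibly as a short separate lemma (or by citing Lemma~\ref{3.1}/Lemma~\ref{3.2}-type facts on $p^m\bmod 3$, noting that $\alpha_0$ non-cube forces $p^m\equiv 1\ (\mathrm{mod}\ 3)$). The same scheme in fact shows that \emph{every} nonzero polynomial of $x$-degree less than $3$ with coefficients in $\mathbb{F}_{p^m}$ — and more generally any unit of $\mathcal{R}[x]/\langle x^{3p^s}-\alpha\rangle$ that reduces to such a polynomial modulo the nilpotents $u,v$ — is a unit of $R_{\alpha_1,\alpha_3,\alpha_4}$.
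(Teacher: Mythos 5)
Your argument is correct, but it takes a genuinely different route from the paper's. You reduce everything to two observations: (i) since $\alpha_0$ is a non-cube in $\mathbb{F}_{p^m}$, the cubic $x^3-\alpha_0$ has no root and hence is irreducible over $\mathbb{F}_{p^m}$, so any nonzero $f$ of degree at most $2$ is coprime to $(x^3-\alpha_0)^{p^s}=x^{3p^s}-\alpha_1$; and (ii) in $R_{\alpha_1,\alpha_3,\alpha_4}$ the element $\eta=x^{3p^s}-\alpha_1=\alpha_3v+\alpha_4uv$ squares to zero, so the B\'ezout identity $a f+b\,\eta=1$ exhibits $af$ as $1$ minus a nilpotent, hence a unit with inverse $1+b\eta$. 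The paper instead argues by cases on $\deg g$ and builds explicit inverses by hand: for the linear case it uses $(x-c)(x^2+cx+c^2)=x^3-c^3$ together with the Frobenius to reduce to inverting $(\alpha_1-c^{3p^s})+\alpha_3v+\alpha_4uv$, which is a unit precisely because $\alpha_1$ is a non-cube; the quadratic case is handled by a longer manipulation that ultimately reduces to the invertibility of a linear polynomial. Both proofs hinge on the same non-cube hypothesis, but yours is uniform (no case split), isolates the one non-formal ingredient (irreducibility of $x^3-\alpha_0$) as a clean lemma, produces an explicit inverse $a(x)\bigl(1+b(x)\eta\bigr)$, and visibly generalizes to any element of $R_{\alpha_1,\alpha_3,\alpha_4}$ whose reduction modulo $u,v$ is a nonzero polynomial of degree less than $3$; the paper's computation, by contrast, yields fully explicit closed-form inverses in each degree but at the cost of length and some delicacy in the degree-two case. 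One minor caveat: your B\'ezout coefficient $b(x)$ should be read modulo $x^{3p^s}-\alpha$ before forming $b(x)\eta$, but since the whole identity is being reduced into the quotient ring this is harmless.
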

\begin{proof}
       Consider a non zero polynomial $g(x)=c_1x^2+c_2x+c_3$ in $\mathbb{F}_{p^m}[x]$ i.e., $c_1,c_2,c_3 \in \mathbb{F}_{p^m}$ are not all zeros. 
        \begin{itemize}
            \item If $deg(g(x))=0$. i.e., $c_1=c_2=0, c_3\neq 0$ and $g(x) =c_1x+c_2.$ Thus, $g(x)=c_3$ is a unit.
            \item If $deg(g(x))=1$. i.e., $c_1=0, c_2\neq 0$ and $g(x)=c_2x+c_3$.
            In $R_{\alpha_1, \alpha_3, \alpha_4}$ we have 
            \begin{align*}
                (x-c_3)^{p^{s}}(x^2+c_3x+c_3^2)^{p^{s}}&=(x^3-c_3^2)^{p^{s}}\\
                &=x^{3p^{s}}-c_3^{2p^{s}}\\
                 &=\alpha_1+ \alpha_3 v + \alpha_4 uv -c_3^{3p^{s}}\\
                 &=(\alpha_1-c_3^{3p^{s}})+( \alpha_3v+\alpha_4 uv).
            \end{align*} 
        Not that $\alpha_1-c_2^{3p^{s}}$ is unit in $\mathbb{F}_{p^m}$ as $\alpha_1$ is non-cube in $\mathbb{F}_{p^m}$. Hence, $(\alpha_1-c_2^{3p^{s}})+( \alpha_3v+\alpha_4 uv)$ is a unit in $\mathcal{R}$. Therefore, 
        \begin{center}
            $(x-c_3)^{-1}=(x-c_3)^{p^{s-1}}(x^2+c_3x+c_3^2)^{p^{s}}(\alpha_1-c_3^{3p^{s}}+ \alpha_3v+\alpha_4 uv)^{-1}$.
        \end{center}
        For any $c_2\neq 0$ in $\mathbb{F}_{p^m}$,  we have 
        \begin{align*}
            (c_2x+c_3)^{-1}&=c_2^{-1}(x-c_2^{-1}(-c_3))^{-1}\\
            &=c_2^{-1}(x-c_2^{-1}(-c_3))^{p^{s-1}}(x^2+c_2^{-1}(-c_3)x+(c_2^{-1}(-c_3))^2)^{p^{s}}(\alpha_1-c_1^{-3p^{s}}c_2^{3p^{s}}+ \alpha_3v+\alpha_4 uv)^{-1}.
        \end{align*}

        \item If $deg(g(x))=2$. i.e., $c_1\neq 0$ and $g(x)=c_1x^2+c_2x+c_3$. In $R_{\alpha_1,  \alpha_3, \alpha_4}$ we have 
    \begin{align*}
        g^{-1}(x)=&(c_1x^2+c_2x+c_3)^{-1}\\
        =&c^{-1}_1(x^2+c^{-1}_1c_2x+c^{-1}_1c_3)^{-1}\\
        =&c^{-1}_1(x^2+c^{-1}_1c_2x+c^{-1}_1c_3)^{p^s-1}(x^2+c^{-1}_1c_2x+c^{-1}_1c_3)^{-p^s}(x-c^{-1}_1c_2)^{-p^s}(x-c^{-1}_1c_2)^{p^s}\\
        =&c^{-1}_1(x^2+c^{-1}_1c_2x+c^{-1}_1c_3)^{p^s-1}(x-c^{-1}_1c_2)^{p^s}\big[(x^2+c^{-1}_1c_2x+c^{-1}_1c_3)(x-c^{-1}_1c_2)\big]^{-p^s}\\
        =&c^{-1}_1(x^2+c^{-1}_1c_2x+c^{-1}_1c_3)^{p^s-1}(x-c^{-1}_1c_2)^{p^s}\big[x^3+(c^{-1}_1c_3-c^{-2}_1c^{2}_2)x-c^{-2}_1c_2c_3\big]^{-p^s}\\
        =&c^{-1}_1(x^2+c^{-1}_1c_2x+c^{-1}_1c_3)^{p^s-1}(x-c^{-1}_1c_2)^{p^s}\big[x^{3p^s}+(c^{-1}_1c_3-c^{-2}_1c^{2}_2)^{p^s}x^{p^s}-(c^{-2}_1c_2c_3)^{p^s}\big]^{-1}\\
        =&c^{-1}_1(x^2+c^{-1}_1c_2x+c^{-1}_1c_3)^{p^s-1}(x-c^{-1}_1c_2)^{p^s} \big[ \alpha_1 + \alpha_3 v + \alpha_4 uv+(c^{-1}_1c_3-c^{-2}_1c^{2}_2)^{p^s}x^{p^s}-(c^{-2}_1c_2c_3)^{p^s}\big]^{-1}\\
        =&c^{-1}_1(x^2+c^{-1}_1c_2x+c^{-1}_1c_3)^{p^s-1}(x-c^{-1}_1c_2)^{p^s}\big[ (\alpha_0-c^{-2}_1c_2c_3 +(c^{-1}_1c_3-c^{-2}_1c^{2}_2)x)^{p^s} + \alpha_3 v + \alpha_4 uv\big]^{-1}
    \end{align*}
So, $g(x)$ is invertible if and only if $\alpha_0-c^{-2}_1c_2c_3 +(c^{-1}_1c_3-c^{-2}_1c^{2}_2)x$ is invertible. Suppose $\alpha_0-c^{-2}_1c_2c_3 +(c^{-1}_1c_3-c^{-2}_1c^{2}_2)x=0$. Then $\alpha_0-c^{-2}_1c_2c_3=0$ and  $c^{-1}_1c_3-c^{-2}_1c^{2}_2=0.$ This implies that $\alpha_0=(c_1^{-1}c_2)^{3}$, a contradiction as $\alpha_0$ is non-cube. 
 \end{itemize}
 Therefore, any non zero polynomial $c_1x^2+c_2x+c_3 \in \mathbb{F}_{p^m}[x]$ is a unit in $R_{\alpha_1,\alpha_3, \alpha_4}$.
       
    \end{proof}

	\begin{lemma}\label{8.1}
		In $\mathcal{R}$,  $\langle v \rangle= \langle (x^3-\alpha_0)^{p^{s}} \rangle$, $x^3-\alpha_0$ is a nilpotent element with nilpotency index $2p^s$.
	\end{lemma}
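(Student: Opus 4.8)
The plan is to deduce everything from one Frobenius-type computation carried out in the ambient ring $R_{\alpha_1,\alpha_3,\alpha_4}=\frac{\mathcal{R}[x]}{\langle x^{3p^s}-(\alpha_1+\alpha_3v+\alpha_4uv)\rangle}$ (this is the ring in which the statement is to be read). Since $\mathcal{R}$ has characteristic $p$, the $p^s$-th power map is additive, so
\begin{equation*}
(x^3-\alpha_0)^{p^s}=x^{3p^s}-\alpha_0^{p^s}=x^{3p^s}-\alpha_1=(\alpha_1+\alpha_3v+\alpha_4uv)-\alpha_1=(\alpha_3+\alpha_4u)v,
\end{equation*}
using $\alpha_0^{p^s}=\alpha_1$ and the defining relation of $R_{\alpha_1,\alpha_3,\alpha_4}$. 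This single identity is the engine of the proof.

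From it the ideal equality is immediate. Because $\alpha_3\neq 0$, the element $\alpha_3+\alpha_4u$ has nonzero constant term and is therefore a unit of $\mathcal{R}$; hence $\langle(x^3-\alpha_0)^{p^s}\rangle=\langle(\alpha_3+\alpha_4u)v\rangle=\langle v\rangle$. For nilpotency, square the displayed identity: $(x^3-\alpha_0)^{2p^s}=(\alpha_3+\alpha_4u)^2v^2=0$ since $v^2=0$ in $\mathcal{R}$. Thus $x^3-\alpha_0$ is nilpotent with nilpotency index dividing $2p^s$, in particular at most $2p^s$.

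It remains to show the index is exactly $2p^s$, i.e. $(x^3-\alpha_0)^{2p^s-1}\neq 0$. I would factor $(x^3-\alpha_0)^{2p^s-1}=(x^3-\alpha_0)^{p^s}\cdot(x^3-\alpha_0)^{p^s-1}=(\alpha_3+\alpha_4u)\,v\,(x^3-\alpha_0)^{p^s-1}$; since multiplication by the unit $\alpha_3+\alpha_4u$ is a bijection, it suffices to check $v\,(x^3-\alpha_0)^{p^s-1}\neq 0$. Here I would invoke that $R_{\alpha_1,\alpha_3,\alpha_4}$ is free as an $\mathcal{R}$-module with basis $1,x,\dots,x^{3p^s-1}$: expanding $(x^3-\alpha_0)^{p^s-1}=\sum_{k=0}^{p^s-1}\binom{p^s-1}{k}(-\alpha_0)^{p^s-1-k}x^{3k}$ shows it is a polynomial of degree $3(p^s-1)=3p^s-3<3p^s$ with leading coefficient $1$, so $v\,(x^3-\alpha_0)^{p^s-1}$ is already reduced (the relation $x^{3p^s}=\alpha$ is never applied), and its coefficient of $x^{3p^s-3}$ is $v\neq 0$ in $\mathcal{R}$. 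Therefore it is a nonzero element of $R_{\alpha_1,\alpha_3,\alpha_4}$, which completes the argument.

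The main obstacle is precisely this final lower bound on the nilpotency index: one must resist the temptation to argue only ``up to units'' and instead exhibit a concrete nonzero coefficient in the free $\mathcal{R}$-basis, taking care that the polynomial representative stays of degree strictly below $3p^s$ so that no reduction modulo $x^{3p^s}-\alpha$ can introduce cancellation. The remaining ingredients — the Frobenius identity, the unit status of $\alpha_3+\alpha_4u$, and $v^2=0$ — are entirely routine.
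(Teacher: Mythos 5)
Your proof is correct and follows essentially the same route as the paper: the Frobenius identity $(x^3-\alpha_0)^{p^s}=v(\alpha_3+\alpha_4u)$ together with the observation that $\alpha_3+\alpha_4u$ is a unit because $\alpha_3\neq 0$. You are in fact more careful than the paper on one point: the paper simply asserts that the nilpotency index is $2p^s$, whereas you verify the lower bound by exhibiting the nonzero coefficient of $x^{3p^s-3}$ in $v\,(x^3-\alpha_0)^{p^s-1}$ within the free $\mathcal{R}$-basis $1,x,\dots,x^{3p^s-1}$ — a worthwhile addition.
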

	\begin{proof}
		Let $x^3-\alpha_0 \in \mathcal{R}$. As $p$ is the characteristic and $p\vert\binom{p^s}{i}$,  for every $i$, $1\leq i\leq p^s-1$,  we have 
		\begin{align*}
			(x^3-\alpha_0)^{p^{s}}&=x^{3p^{s}}-\alpha_0^{p^{s}}+\sum_{i=1}^{p^s-1}(-1)^{p^{s}-i}\binom{p^s}{i}(x^{3})^{i}(\alpha_0)^{p^{s}-i}\\
			&=x^{3p^{s}}-\alpha_0^{p^{s}}\\
			&=x^{3p^{s}}-\alpha_1\\
			&=\alpha_3 v+\alpha_4 uv\\
    &=v(\alpha_3+\alpha_4 u).
		\end{align*}
		Since $\alpha_3 \neq 0$,  $(\alpha_3+\alpha_4 u)$ is a unit in $R_{\alpha_1,\alpha_3, \alpha_4}$.
		Thus, $\langle v \rangle =\langle (x^3-\alpha_0)^{p^{s}} \rangle$. Hence,  the nilpotency index of $(x^3-\alpha_0)$ is $2p^s$.
	\end{proof}

	\begin{lemma}
		The ring $R_{\alpha_1, \alpha_3, \alpha_4}$ is a local ring with the unique maximal ideal  $\langle (x^3-\alpha_0), u\rangle $, but $R_{\alpha_1, \alpha_3, \alpha_4}$ is not a chain ring.
	\end{lemma}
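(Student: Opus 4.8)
The plan is to establish three things in turn: that $M:=\langle x^3-\alpha_0,\,u\rangle$ is a maximal ideal whose residue ring is a finite field; that $M$ is a nilpotent ideal, which forces every element outside $M$ to be a unit and hence makes $R_{\alpha_1,\alpha_3,\alpha_4}$ local with maximal ideal $M$; and finally that $\langle u\rangle$ and $\langle x^3-\alpha_0\rangle$ are incomparable ideals, which contradicts the chain property. Throughout I would lean on Lemma~\ref{8.1}, which gives $(x^3-\alpha_0)^{p^s}=v(\alpha_3+\alpha_4u)$ with $\alpha_3+\alpha_4u$ a unit, so that $v\in\langle x^3-\alpha_0\rangle$ and $(x^3-\alpha_0)^{2p^s}=0$, and on Proposition~\ref{5} together with the remark following it, which gives that $\alpha_0$ is non-cube in $\mathbb{F}_{p^m}$.

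For the first point: since $v\in\langle x^3-\alpha_0\rangle\subseteq M$, $uv\in\langle u\rangle\subseteq M$, and $x^{3p^s}-\alpha_1=(x^3-\alpha_0)^{p^s}\in M$ (the last equality is the Frobenius/binomial identity already established inside the proof of Lemma~\ref{8.1}), reducing modulo $M$ annihilates $u,v,uv$ and imposes $x^3=\alpha_0$; hence $R_{\alpha_1,\alpha_3,\alpha_4}/M$ is a quotient of $\mathbb{F}_{p^m}[x]/\langle x^3-\alpha_0\rangle$. Because $\alpha_0$ is a non-cube, the cubic $x^3-\alpha_0$ has no root in $\mathbb{F}_{p^m}$, hence is irreducible, so $\mathbb{F}_{p^m}[x]/\langle x^3-\alpha_0\rangle\cong\mathbb{F}_{p^{3m}}$ is a field. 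That $M$ is proper I would see from the ring epimorphism $\pi\colon R_{\alpha_1,\alpha_3,\alpha_4}\to\mathbb{F}_{p^m}[x]/\langle x^{3p^s}-\alpha_1\rangle$ induced by $u,v\mapsto0$ (well defined since $x^{3p^s}-\alpha\mapsto x^{3p^s}-\alpha_1$): it carries $M$ into $\langle x^3-\alpha_0\rangle$, which is proper there because $x^3-\alpha_0$ is nilpotent. Thus $R_{\alpha_1,\alpha_3,\alpha_4}/M\cong\mathbb{F}_{p^{3m}}$ and $M$ is maximal.

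For the second point: $M$ is generated by $x^3-\alpha_0$ and $u$, with $u^2=0$ and $(x^3-\alpha_0)^{2p^s}=0$, so any product of $2p^s+1$ of these generators either repeats $u$ or contains at least $2p^s$ factors equal to $x^3-\alpha_0$, and in either case vanishes; hence $M^{2p^s+1}=0$. Given $f\notin M$, its image in the field $R_{\alpha_1,\alpha_3,\alpha_4}/M$ is invertible, so $fh=1-n$ for some $h$ and some $n\in M$, and since $n$ is nilpotent, $1-n$ is a unit (inverse $1+n+n^2+\cdots$); hence $f$ is a unit. So the non-units of $R_{\alpha_1,\alpha_3,\alpha_4}$ are exactly the elements of the proper ideal $M$, which makes $M$ the unique maximal ideal and $R_{\alpha_1,\alpha_3,\alpha_4}$ local.

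For the third point it suffices to find two ideals, neither containing the other. Applying $\pi$, $\langle u\rangle$ maps to $0$ while $x^3-\alpha_0$ maps to the nonzero element $x^3-\alpha_0$ (its degree $3$ being less than $3p^s$), so $x^3-\alpha_0\notin\langle u\rangle$; and since reducing modulo $\langle x^3-\alpha_0\rangle$ also kills $v$ and $uv$, one gets $R_{\alpha_1,\alpha_3,\alpha_4}/\langle x^3-\alpha_0\rangle\cong\mathbb{F}_{p^{3m}}[u]/\langle u^2\rangle$, in which the image of $u$ is nonzero, so $u\notin\langle x^3-\alpha_0\rangle$. Hence the ideals of $R_{\alpha_1,\alpha_3,\alpha_4}$ do not form a chain; equivalently, by Proposition~\ref{prop2.1}, the maximal ideal $M$ cannot be principal, so $R_{\alpha_1,\alpha_3,\alpha_4}$ is not a chain ring. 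The nilpotency count and the ``field residue plus nilpotent maximal ideal implies local'' step are routine; the point that needs the most care is checking that the auxiliary maps ($\pi$ and the isomorphism onto $\mathbb{F}_{p^{3m}}[u]/\langle u^2\rangle$) are genuinely well defined and that the quotients collapse $v$ and $uv$ exactly as Lemma~\ref{8.1} predicts, since this is precisely what keeps the two displayed ideals from accidentally nesting and keeps $M$ proper.
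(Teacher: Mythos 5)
Your proof is correct, and it reaches the conclusion by a genuinely different route than the paper. The paper proceeds computationally: it writes an arbitrary element $h(x)$ uniquely as a sum of quadratics times powers of $(x^3-\alpha_0)$, uses Lemma~\ref{8.1} to eliminate $v$, arrives at the canonical form of Equation~\ref{eqn 5.1}, and then invokes the preceding proposition (every nonzero $c_1x^2+c_2x+c_3$ is a unit) together with the nilpotency of $u$ and $x^3-\alpha_0$ to conclude that the non-units are exactly $\langle x^3-\alpha_0,\,u\rangle$; the non-principality of the maximal ideal is then asserted with the same two non-membership claims you prove. Your argument instead identifies the residue ring $R_{\alpha_1,\alpha_3,\alpha_4}/M\cong\mathbb{F}_{p^m}[x]/\langle x^3-\alpha_0\rangle\cong\mathbb{F}_{p^{3m}}$ (irreducibility of the cubic being exactly where the non-cube hypothesis on $\alpha_0$ enters, just as it does in the paper's unit proposition), shows $M$ is nil via $u^2=0$ and $(x^3-\alpha_0)^{2p^s}=0$, and deduces locality from the standard ``unit modulo a nil ideal implies unit'' argument. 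This is cleaner and more self-contained --- in particular it bypasses the paper's somewhat laborious proposition on invertibility of quadratics and actually supplies the verification of $u\notin\langle x^3-\alpha_0\rangle$ and $x^3-\alpha_0\notin\langle u\rangle$ that the paper leaves as ``easy to verify.'' What it does not buy you is the explicit canonical representation \ref{eqn 5.1}, which the paper needs again in the classification of ideals (Theorem~\ref{4.6}); so the paper's computation is doing double duty, while yours is optimized for this lemma alone. Your well-definedness checks on $\pi$ and on the quotient $\mathbb{F}_{p^{3m}}[u]/\langle u^2\rangle$ are the right places to be careful, and they go through.
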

	\begin{proof}
		Let $h(x)$ be a polynomial in $R_{\alpha_1, \alpha_3, \alpha_4}$.  Then it can be expressed as  $h(x)=h_1(x)+uh_2(x)+vh_3(x)+uvh_4(x)$,  where $h_i(x)$, $1\leq i\leq 4$, are polynomials of degree less than or equal to $3p^s-1$ over $\mathbb{F}_{p^m}$. Thus, $h(x)$ can be uniquely expressed as
		\begin{align*}
			h(x)=&\sum_{\ell=0}^{p^s-1}(a_{0\ell}x^{2}+b_{0\ell}x+c_{0\ell})(x^3-\alpha_0)^\ell+u\sum_{\ell=0}^{p^s-1}(a_{1\ell}x^{2}+b_{1\ell}x+c_{1\ell})(x^3-\alpha_0)^\ell\\
			&+v\sum_{\ell=0}^{p^s-1}(a_{2\ell}x^{2}+b_{2\ell}x+c_{2\ell})(x^3-\alpha_0)^\ell+uv\sum_{\ell=0}^{p^s-1}(a_{3\ell}x^{2}+b_{3\ell}x+c_{3\ell})(x^3-\alpha_0)^\ell\\
			=&(a_{00}x^2+b_{00}x+c_{00})+\sum_{\ell=1}^{p^s-1}(a_{0\ell}x^{2}+b_{0\ell}x+c_{0\ell})(x^3-\alpha_0)^\ell+u\sum_{\ell=0}^{p^s-1}(a_{1\ell}x^2+b_{1\ell}x+c_{1\ell})(x^3-\alpha_0)^\ell\\
			&+v\begin{pmatrix}
				\sum\limits_{\ell=0}^{p^s-1}(a_{2\ell}x^{2}+b_{2\ell}x+c_{2\ell})(x^3-\alpha_0)^\ell+u\sum\limits_{\ell=0}^{p^s-1}(a_{3\ell}x^{2}+b_{3\ell}x+c_{3\ell})(x^3-\alpha_0)^\ell
			\end{pmatrix},
		\end{align*} 
		where $a_{j\ell},  b_{j\ell}, c_{j\ell} \in \mathbb{F}_{p^m}$,  $j=0, 1, 2, 3$. By Lemma \ref{8.1},  we have $v=(\alpha_3+\alpha_4 u)^{-1}(x^3-\alpha_0)^{p^{s}}$. Thus, $h(x)$ can be written as
		\begin{align}\label{eqn 5.1}
			\notag h(x)=&(a_{00}x^2+b_{00}x+c_{00})+(x^3-\alpha_0)\sum_{\ell=1}^{p^s-1}(a_{0\ell}x^{2}+b_{0\ell}x+c_{0\ell})(x^3-\alpha_0)^{\ell-1}+u\sum_{\ell=0}^{p^s-1}(a_{1\ell}x^2+b_{1\ell}x+c_{1\ell})(x^3-\alpha_0)^\ell\\
			\notag	&+(\alpha_3+\alpha_4 u)^{-1}(x^3-\alpha_0)^{p^{s}} \begin{pmatrix}
				\sum\limits_{\ell=0}^{p^s-1}(a_{2\ell}x^{2}+b_{2\ell}x+c_{2\ell})(x^3-\alpha_0)^\ell+u\sum\limits_{\ell=0}^{p^s-1}(a_{3\ell}x^{2}+b_{3\ell}x+c_{3\ell})(x^3-\alpha_0)^\ell
			\end{pmatrix}\\
			=&(a_{00}x^2+b_{00}x+c_{00})+(x^3-\alpha_0)\sum\limits_{\ell=1}^{2p^s-1}(a^{\prime}_{0\ell}x^2+b^{\prime}_{0\ell}x+c^{\prime}_{0\ell})(x^3-\alpha_0)^{\ell-1}+u\sum\limits_{\ell=0}^{2p^s-1}(a^{\prime}_{1\ell}x^2+b^{\prime}_{1\ell}x+c^{\prime}_{1\ell})(x^3-\alpha_0)^\ell.
		\end{align} 
		As both $u$ and $(x^3-\alpha_0)$ are nilpotent  elements in $R_{\alpha_1, \alpha_3, \alpha_4}$,  the polynomial $h(x)$ is non-unit if and only if $a_{00}=b_{00}=c_{00}=0$. Hence $\langle (x^3-\alpha_0),  u \rangle $ represents the set of all non-units of $R_{\alpha_1, \alpha_3, \alpha_4}$,  which shows that $R_{\alpha_1,  \alpha_3, \alpha_4}$ is a local ring, and the unique maximal ideal is given by $\langle (x^3-\alpha_0), u \rangle $. It is easy to verify that $x^3-\alpha_0 \notin \langle u \rangle $ and $u\notin \langle (x^3-\alpha_0) \rangle $. Therefore,  the maximal ideal $\langle (x^3-\alpha_0),  u \rangle $ is not principal. By Proposition \ref{prop2.1},  $R_{\alpha_1, \alpha_3, \alpha_4}$ is not a chain ring.
	\end{proof}

The ideals of the ring $R_{\alpha_1, \alpha_3, \alpha_4}$ are characterized by the following theorem.  Hence, it accounts for all $(\alpha_1 + \alpha_3 v + \alpha_4 uv)$-constacyclic codes of length $3p^s$ over $\mathcal{R}$.
	\begin{theorem}\label{4.6}
		Distinct ideals of the ring $R_{\alpha_1,  \alpha_3, \alpha_4}$ are
		\begin{itemize}
			\item Type A: Trivial ideals
			\begin{center}
				$\langle 0 \rangle $,  $\langle 1 \rangle $.
			\end{center}
			\item Type B: Principal ideals with nonmonic generators
			\begin{center}
				$\langle u(x^3-\alpha_0)^\ell \rangle $,
			\end{center}
			where $0\leq  \ell \leq 2p^s-1$.
			\item Type C: Principal ideals with monic generators
			\begin{center}
				$\langle (x^3-\alpha_0)^\ell +u(x^3-\alpha_0)^t z(x) \rangle $,
			\end{center}
			where $1\leq  \ell \leq 2p^s-1$,  $0\leq  t < \ell $ and either $z(x)$ is 0 or $z(x)$ is an invertible which can be represented as $z(x)=\sum\limits_{\kappa}^{}(z_{0\kappa}x^2+z_{1\kappa}x+z_{2\kappa})(x^3-\alpha_0)^{\kappa}$, where $z_{0\kappa},  z_{1\kappa}, z_{2\kappa} \in \mathbb{F}_{p^m}$ and $z_{00}x^2+z_{10}x+ z_{20}\neq 0$.
			\item Type D: Non-principal ideals
			\begin{center}
				$\langle (x^3-\alpha_0)^\ell +u\sum\limits_{\kappa=0}^{\mu-1}(a_{\kappa}x^2+b_{\kappa}x+c_{\kappa})(x^3-\alpha_0)^{\kappa}, u (x^3-\alpha_0)^{\mu} \rangle $,
			\end{center}
			where $0\leq  \ell \leq 2p^s-1$,  $a_{\kappa}, b_{\kappa},c_{\kappa} \in  \mathbb{F}_{p^m}$ and $\mu\ < \Im$ with $\Im$ being the smallest integer such that 
			\begin{center}
				$u (x^3-\alpha_0)^{\Im} \in \langle (x^3-\alpha_0)^\ell +u\sum\limits_{\kappa=0}^{\mu-1}(a_{\kappa}x^2+b_{\kappa}x+c_{\kappa})(x^3-\alpha_0)^{\kappa}  \rangle $
			\end{center}
			or equivalently, 
			\begin{center}
				$\langle (x^3-\alpha_0)^\ell +u(x^3-\alpha_0)^t z(x),  u (x^3-\alpha_0)^{\mu} \rangle $
			\end{center}
			with $z(x)$ as in Type C and deg $(z(x))\leq\mu-t-1$.
		\end{itemize}
	\end{theorem}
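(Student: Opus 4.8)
The plan is to reduce the classification to the well-known one for rings of the form (finite chain ring) $\oplus\, u\cdot$(finite chain ring), and then run the standard two-generator analysis. Write $R=R_{\alpha_1,\alpha_3,\alpha_4}$ and $y=x^3-\alpha_0$. From the canonical representation (\ref{eqn 5.1}), every element of $R$ is uniquely $\sum_{\ell=0}^{2p^s-1}(a_\ell x^2+b_\ell x+c_\ell)y^\ell+u\sum_{\ell=0}^{2p^s-1}(a'_\ell x^2+b'_\ell x+c'_\ell)y^\ell$, and we have already proved that every nonzero quadratic $c_1x^2+c_2x+c_3\in\mathbb{F}_{p^m}[x]$ is a unit in $R$ (this rests on $\alpha_0$ being a non-cube, equivalently $x^3-\alpha_0$ being irreducible over $\mathbb{F}_{p^m}$). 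Hence the reduction $\mathcal{S}:=R/\langle u\rangle$ is isomorphic to $\mathbb{F}_{p^m}[x]/\langle y^{2p^s}\rangle$, a finite chain ring with maximal ideal $\langle y\rangle$ and nilpotency index $2p^s$; using Lemma \ref{8.1} to absorb $v$ into $y^{p^s}$ one gets $R\cong\mathcal{S}\oplus u\mathcal{S}$ with $u^2=0$, and $\langle u\rangle=u\mathcal{S}$ is a free $\mathcal{S}$-module of rank one, so its $R$-submodules are precisely the $\langle uy^\mu\rangle$, $0\le\mu\le 2p^s$.

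Next, let $I$ be an ideal with $I\ne\langle0\rangle,\langle1\rangle$. Since $\mathcal{S}$ is a chain ring, there is a unique $\ell\in\{0,\ldots,2p^s\}$ with $\overline I=\langle y^\ell\rangle$ in $\mathcal{S}$. If $\ell=0$, then $I$ contains an element with unit image in $\mathcal{S}$, hence a unit of $R$, forcing $I=R$ --- excluded. If $\ell=2p^s$, then $I\subseteq\langle u\rangle$, so $I=\langle uy^\mu\rangle$ for a unique $0\le\mu\le 2p^s-1$: Type B. So assume $1\le\ell\le 2p^s-1$. Pick $g\in I$ whose image generates $\overline I$; after scaling by a unit we may take $g=y^\ell+uh(x)$ with $h\in\mathcal{S}$, and the $y$-adic expansion of $h$ in the chain ring $\mathcal{S}$ gives $g=y^\ell+uy^tz(x)$ with either $z=0$ or $z$ a unit; if $t\ge\ell$ then $g$ is a unit multiple of $y^\ell$, so we may assume $z=0$ or ($z$ a unit, $0\le t<\ell$). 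Thus $g$ has the shape appearing in Type C.

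If $\langle g\rangle=I$ we are in Type C (the normalization of $z(x)$ so that $z_{00}x^2+z_{10}x+z_{20}\ne0$ is again a leading-term adjustment). If $\langle g\rangle\subsetneq I$, then for each $f\in I$ we have $\overline f\in\langle\overline g\rangle$, so $f-sg\in I\cap\langle u\rangle$ for a suitable $s\in R$; hence $I=\langle g\rangle+(I\cap\langle u\rangle)=\langle g,uy^\mu\rangle$ with $I\cap\langle u\rangle=\langle uy^\mu\rangle$, $0\le\mu\le 2p^s-1$. Since $ug=uy^\ell\in\langle g\rangle$, we have $uy^j\in\langle g\rangle$ for all $j\ge\ell$, so the least $\Im$ with $uy^{\Im}\in\langle g\rangle$ satisfies $\Im\le\ell$; proper inclusion forces $\mu<\Im$, and adding $\mathcal{S}$-multiples of $uy^\mu$ to $g$ removes all $y$-powers $\ge\mu-t$ from $z(x)$, giving $\deg z\le\mu-t-1$. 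This is Type D (the literal constraint $0\le\ell\le 2p^s-1$ there is harmless: $\ell=0$ would require $\mu<\Im=0$).

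It remains to show the listed ideals are pairwise distinct and that the list is complete; distinctness is obtained by reading off from $I$ the invariants $\ell$ (residue degree), $\mu$ (the exponent of $I\cap\langle u\rangle$), and the normalized $z(x)$, using uniqueness of $y$-adic expansions in $\mathcal{S}$. I expect the main obstacle to be the bookkeeping in the non-principal case: proving that every non-principal $I$ really admits a representative with $t<\ell$, $\mu<\Im$ and $\deg z\le\mu-t-1$, that distinct such data give distinct ideals, and that the parameters are mutually consistent --- in particular computing $\Im$ explicitly (it is $\min\{\ell,\,2p^s-\ell+t\}$ when $z$ is a unit, and $\ell$ when $z=0$) and verifying the membership $uy^{\Im}\in\langle g\rangle$ exactly. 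Everything else is a routine transfer of the chain-ring structure of $\mathcal{S}$ to the rank-one extension $R=\mathcal{S}\oplus u\mathcal{S}$.
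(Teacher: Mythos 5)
Your proposal is correct and follows essentially the same route as the paper: you classify $I$ by its image in the chain-ring quotient $R/\langle u\rangle$ (the paper's $I_u\subseteq\frac{(\mathbb{F}_{p^m}+v\mathbb{F}_{p^m})[x]}{\langle x^{3p^s}-(\alpha_1+\alpha_3 v)\rangle}$, whose ideals are the $\langle (x^3-\alpha_0)^\kappa\rangle$) together with $I\cap\langle u\rangle$, splitting into the cases $I\subseteq\langle u\rangle$ (Type B) and $I\not\subseteq\langle u\rangle$ with principal/non-principal subcases (Types C and D), exactly as in the paper's Cases 1, 2a, 2b. The repackaging as $R\cong\mathcal{S}\oplus u\mathcal{S}$ with $\mathcal{S}$ a chain ring is just a cleaner statement of the paper's Equation (\ref{eqn 5.1}), and the bookkeeping you defer (the value of $\Im$, the reduction to $\deg z\le\mu-t-1$, $\mu<\Im$) is handled the same way in the paper, with $\Im$ computed separately in Theorem \ref{8.2}.
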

	\begin{proof}
		$\langle 0 \rangle $,  $\langle 1 \rangle $ are trivial ideals. By Equation \ref{eqn 4.1},  any element $h(x)$ of the ring $R_{\alpha_1, \alpha_2,  \alpha_3, \alpha_4}$ can be expressed as
		\begin{center}
			$h(x)=\sum\limits_{\ell=0}^{2p^s-1}(a^{\prime}_{0\ell}x^2+b^{\prime}_{0\ell}x+c^{\prime}_{0\ell})(x^3-\alpha_0)^{\ell}+u\sum\limits_{\ell=0}^{2p^s-1}(a^{\prime}_{1\ell}x^2+b^{\prime}_{1\ell}x+c^{\prime}_{1\ell})(x^3-\alpha_0)^\ell.$
		\end{center}
		Let $I\subseteq R_{\alpha_1, \alpha_3, \alpha_4}$ be an arbitrary non-trivial ideal. Then we have the following possibilities for $I$:
		\begin{itemize}
			\item \textbf{Case 1}:  $I\subseteq \langle u \rangle$. Then, every element of $I$ has the form $u\sum\limits_{\ell=0}^{2p^s-1}(a_{1\ell}x^2+b_{1\ell}x+c_{1\ell})(x^3-\alpha_0)^\ell$ where $a_{1\ell}, b_{1\ell}, c_{1\ell}\in \mathbb{F}_{p^m}$. Then there exists an element $a \in I$ with the smallest integer $ \partial$ such that $a_{1 \partial}x^2+b_{1 \partial}x+c_{1 \partial} \neq 0$. Hence each element $c(x) \in  I$ has the form
			\begin{equation*}
				c(x)=u(x^3-\alpha_0)^ \partial\sum\limits_{\ell= \partial}^{2p^s-1}(\Tilde{a}_{1\ell}x^2+\Tilde{b}_{1\ell}x+\Tilde{c}_{1\ell})(x^3-\alpha_0)^{\ell- \partial}
			\end{equation*}
			which implies $I \subseteq \langle u(x^3-\alpha_0)^ \partial \rangle$.
			Furthermore, we have $a \in I$ with
			\begin{align*}
				a&=u(x^3-\alpha_0)^ \partial\sum\limits_{\ell= \partial}^{2p^s-1}(a_{1\ell}x^2+b_{1\ell}x+c_{1\ell})(x^3-\alpha_0)^{\ell- \partial}\\
				&=u(x^3-\alpha_0)^ \partial
				\begin{pmatrix}
					a_{1 \partial}x^2+b_{1\partial}x+c_{1 \partial}+\sum\limits_{\ell= \partial+1}^{2p^s-1}(a_{1\ell}x^2+b_{1\ell}x+c_{1\ell})(x^3-\alpha_0)^{\ell- \partial}
				\end{pmatrix} 
			\end{align*}
			Since $a_{1 \partial}x^2+b_{1\partial}x+c_{1 \partial} \neq 0$,  $(a_{1 \partial}x^2+b_{1\partial}x+c_{1 \partial})+\sum\limits_{\ell= \partial+1}^{2p^s-1}(a_{1\ell}x^2+b_{1\ell}x+c_{1\ell})(x^3-\alpha_0)^{\ell- \partial}$ is a unit.  Therefore, it implies that $u(x^3-\alpha_0)^ \partial \in I$. Thus, $I = \langle u(x^3-\alpha_0)^ \partial \rangle$,  $0\leq   \partial \leq 2p^s-1$, and  it is an ideal of Type B.

			\item \textbf{Case 2}: $I\not\subseteq \langle u \rangle$. Let us represent $I_u$ as the set of all elements of $I$ modulo $u$. Thus $I_u$ is an ideal of $\frac{(\mathbb{F}_{p^m} + v\mathbb{F}_{p^m})[x]}{\langle x^{3p^s}-(\alpha_1 + \alpha_3 v) \rangle}$. From \cite{chen2016constacyclic}, any ideal of  $\frac{(\mathbb{F}_{p^m} + v\mathbb{F}_{p^m})[x]}{\langle x^{3p^s}-(\alpha_1 + \alpha_3 v) \rangle}$ is of the form $\langle (x^3-\alpha_0)^{\kappa} \rangle $,  where  $0\leq  \kappa \leq 2p^s$. Therefore,  $I_u=\langle (x^3-\alpha_0)^\ell \rangle $ for $0\leq \ell  \leq 2p^s-1$. As a result,  there exists an element of the form
			\begin{equation*}
				c(x)=\sum\limits_{\kappa=0}^{2p^s-1}(a_{0\kappa}x^2+b_{0\kappa}x+c_{0\kappa})(x^3-\alpha_0)^{\kappa}+u\sum\limits_{\kappa=0}^{2p^s-1}(a_{1\kappa}x^2+b_{1\kappa}x+c_{1\kappa})(x^3-\alpha_0)^{\kappa} \in R_{\alpha_1, \alpha_3, \alpha_4} ,
			\end{equation*}
			where $a_{0\kappa},  b_{0\kappa}, c_{0\kappa} a_{1\kappa},  b_{1\kappa},c_{1\kappa} \in \mathbb{F}_{p^m}$ such that $(x^3-\alpha_0)^\ell+uc(x) \in I$. Since
			\begin{equation*}
				(x^3-\alpha_0)^\ell+uc(x)=(x^3-\alpha_0)^\ell+u\sum\limits_{\kappa=0}^{2p^s-1}(a_{0\kappa}x^2+b_{0\kappa}x+c_{0\kappa})(x^3-\alpha_0)^{\kappa} \in I
			\end{equation*}
			and
			\begin{equation*}
				u(x^3-\alpha_0)^ \partial=u\begin{bmatrix}
					(x^3-\alpha_0)^\ell+uc(x)
				\end{bmatrix}
				(x^3-\alpha_0)^{ \partial-\ell} \in I
			\end{equation*}
			with $\ell\leq  \partial \leq 2p^s-1$,  it follows that 
			\begin{equation*}
				(x^3-\alpha_0)^\ell+u\sum\limits_{\kappa=0}^{\ell-1}(a_{0\kappa}x^2+b_{0\kappa}x+c_{0\kappa})(x^3-\alpha_0)^{\kappa} \in I.
			\end{equation*}
			Now consider two subcases.
			\begin{itemize}
				\item \textbf{Subcase 2a:} $I=\langle (x^3-\alpha_0)^\ell+u\sum\limits_{\kappa=0}^{\ell-1}(a_{0\kappa}x^2+b_{0\kappa}x+c_{0\kappa})(x^3-\alpha_0)^{\kappa} \rangle$. Then $I$ can be represented as 
				\begin{equation*}
					I=\langle (x^3-\alpha_0)^\ell+u(x^3-\alpha_0)^{t} z(x) \rangle,
				\end{equation*}
				where $z(x)$ is 0 or $z(x)$ is an invertible which can be represented as $z(x)=\sum\limits_{\kappa}^{}(z_{0\kappa}x^2+z_{1k}x+z_{2\kappa})(x^3-\alpha_0)^{\kappa} $ where $z_{0\kappa},  z_{1\kappa}, z_{2\kappa} \in \mathbb{F}_{p^m}$ and $z_{00}x^2+z_{10}x+z_{20} \neq 0$ and $0 \leq t<\ell$. Therefore, $I$ is of Type C.
				\item \textbf{Subcase 2b:} $\langle (x^3-\alpha_0)^\ell+u\sum\limits_{\kappa=0}^{\ell-1}(a_{0\kappa}x^2+b_{0\kappa}x+c_{0\kappa})(x^3-\alpha_0)^{\kappa} \rangle \subsetneq I$. Then there exists 
				\begin{equation*}
					g(x) \in I \backslash \langle (x^3-\alpha_0)^\ell+u\sum\limits_{\kappa=0}^{\ell-1}(a_{0\kappa}x^2+b_{0\kappa}x+c_{0\kappa})(x^3-\alpha_0)^{\kappa} \rangle.
				\end{equation*}
				Therefore, there exists a polynomial $h(x) \in  R_{\alpha_1, \alpha_3, \alpha_4} $ such that
				\begin{equation*}
					0 \neq z(x)=g(x)-h(x)\begin{bmatrix}
						(x^3-\alpha_0)^\ell+u\sum\limits_{\kappa=0}^{\ell-1}(a_{0\kappa}x^2+b_{0\kappa}x+c_{0\kappa})(x^3-\alpha_0)^{\kappa}
					\end{bmatrix} \in I
				\end{equation*}
				and thus $z(x)$ can be represented as 
				\begin{equation*}
					z(x)=\sum\limits_{\kappa=0}^{\ell-1}(z_{0\kappa}x^2+z^{\prime}_{0\kappa}x+z^{\prime \prime}_{0\kappa})(x^3-\alpha_0)^{\kappa}+u\sum\limits_{\kappa=0}^{\ell-1}(z_{1\kappa}x^2+z^{\prime}_{1\kappa}x+z^{\prime \prime}_{1\kappa})(x^3-\alpha_0)^{\kappa},
				\end{equation*}
				where $z_{0\kappa},  z^{\prime}_{0\kappa},  z^{\prime \prime}_{0\kappa}, z_{0\kappa}  z^{\prime}_{1\kappa}, z^{\prime \prime}_{1\kappa} \in \mathbb{F}_{p^m}$. Now $z(x)$ reduced modulo $u$ is contained in $I_u=\langle (x^3-\alpha_0)^\ell \rangle $ and thus $z_{0\kappa}=  z^{\prime}_{0\kappa}=z^{\prime \prime}_{0\kappa}=0$ for all $\kappa$, $0\leq \kappa \leq \ell-1$. i.e., $z(x)=u\sum\limits_{\kappa=0}^{\ell-1}(z_{1\kappa}x^2+z^{\prime}_{1\kappa}x+z^{\prime \prime}_{1\kappa})(x^3-\alpha_0)^{\kappa}$. Since $z(x) \neq 0$,  there exists a smallest integer $\zeta$,  $0 \leq \zeta \leq \ell-1$,  such that $z_{1 \zeta}x^2+z^{\prime}_{1 \zeta}x+z^{\prime}_{1 \zeta} \neq 0$. Then
				\begin{align*}
					z(x)&=u\sum\limits_{\kappa=0}^{\ell-1}(z_{1\kappa}x^2+z^{\prime}_{1\kappa}x+z^{\prime \prime}_{1\kappa})(x^3-\alpha_0)^{\kappa}\\
					&=u(x^3-\alpha_0)^{\zeta} \Biggr[
						z_{1 \zeta}x^2+z^{\prime}_{1 \zeta} x+z^{\prime \prime}_{1 \zeta} +\sum\limits_{\kappa=\zeta+1}^{\ell-1}(z_{1\kappa}x^2+z^{\prime}_{1\kappa}x+z^{\prime \prime}_{1\kappa})(x^3-\alpha_0)^{\kappa-\zeta}
					\Biggr].
				\end{align*}
				Since $z_{1 \zeta}x^2+z^{\prime}_{1 \zeta} x+z^{\prime \prime}_{1 \zeta} \neq 0$ and $z_{1 \zeta}x^2+z^{\prime}_{1 \zeta} x+z^{\prime \prime}_{1 \zeta} +\sum\limits_{\kappa=\zeta+1}^{\ell-1}(z_{1\kappa}x^2+z^{\prime}_{1\kappa}x+z^{\prime \prime}_{1\kappa})(x^3-\alpha_0)^{\kappa-\zeta}$ is a unit  in $R_{\alpha_1,  \alpha_3, \alpha_4} $,  we have 
				\begin{align*}
					u(x^3-\alpha_0)^{\zeta} & =\begin{pmatrix}
						z_{1 \zeta}x^2+z^{\prime}_{1 \zeta} x+z^{\prime \prime}_{1 \zeta} +\sum\limits_{\kappa=\zeta+1}^{\ell-1}(z_{1\kappa}x^2+z^{\prime}_{1\kappa}x+z^{\prime \prime}_{1\kappa})(x^3-\alpha_0)^{\kappa-\zeta}
					\end{pmatrix}^{-1} z(x) \in I.
				\end{align*}
			For any 
				\begin{equation*}
					g(x) \in I \backslash \langle (x^3-\alpha_0)^\ell+u\sum\limits_{\kappa=0}^{\ell-1}(a_{0\kappa}x^2+b_{0\kappa}x+c_{0\kappa})(x^3-\alpha_0)^{\kappa} \rangle
				\end{equation*}
					we have proved that there is an integer $\zeta$, $0\leq \zeta \leq \ell-1 $ such that $u(x^3-\alpha_0)^\zeta \in I$. Let 
				\begin{equation*}
					\mu=\min\{\quad \zeta \quad : \quad g(x) \in I \backslash \langle (x^3-\alpha_0)^\ell+u\sum\limits_{\kappa=0}^{\ell-1}(a_{0\kappa}x^2+b_{0\kappa}x+c_{0\kappa})(x^3-\alpha_0)^{\kappa} \rangle \}.
				\end{equation*}
				Then
				\begin{equation*}
					\langle (x^3-\alpha_0)^\ell+u\sum\limits_{\kappa=0}^{\ell-1}(a_{0\kappa}x^2+b_{0\kappa}x+c_{0\kappa})(x^3-\alpha_0)^{\kappa},  u (x^3-\alpha_0)^{\mu}\rangle \subseteq I.
				\end{equation*}
				Furthermore, according to the preceding construction, for any element $g(x)$ of $I$ there exists a polynomial $h(x) \in I$ such that
				\begin{equation*}
					g(x)-h(x)\begin{bmatrix}
						(x^3-\alpha_0)^\ell+u\sum\limits_{\kappa=0}^{\ell-1}(a_{0\kappa}x^2+b_{0\kappa}x+c_{0\kappa})(x^3-\alpha_0)^{\kappa}
					\end{bmatrix} \in \langle u (x^3-\alpha_0)^{\mu} \rangle,
				\end{equation*}
				which implies that 
				\begin{equation*}
					g(x)\in \langle (x^3-\alpha_0)^\ell+u\sum\limits_{\kappa=0}^{\ell-1}(a_{0\kappa}x^2+b_{0\kappa}x+c_{0\kappa})(x^3-\alpha_0)^{\kappa},  u (x^3-\alpha_0)^{\mu}\rangle.
				\end{equation*}
				Thus,
				\begin{align*}
					I&=\langle (x^3-\alpha_0)^\ell+u\sum\limits_{\kappa=0}^{\ell-1}(a_{0\kappa}x^2+b_{0\kappa}x+c_{0\kappa})(x^3-\alpha_0)^{\kappa},  u (x^3-\alpha_0)^{\mu}\rangle \\
					&=\langle (x^3-\alpha_0)^\ell+u\sum\limits_{\kappa=0}^{\mu-1}(a_{0\kappa}x^2+b_{0\kappa}x+c_{0\kappa})(x^3-\alpha_0)^{\kappa},  u (x^3-\alpha_0)^{\mu}\rangle.
				\end{align*}
				Let $\Im$ be the smallest integer such that $u (x^3-\alpha_0)^{\Im} \in \langle (x^3-\alpha_0)^\ell+u\sum\limits_{\kappa=0}^{\ell-1}(a_{0\kappa}x^2+b_{0\kappa}x+c_{0\kappa})(x^3-\alpha_0)^{\kappa}\rangle $. If $\mu \geq \Im$,  then 
				\begin{align*}
					I&=\langle (x^3-\alpha_0)^\ell+u\sum\limits_{\kappa=0}^{\mu-1}(a_{0\kappa}x^2+b_{0\kappa}x+c_{0\kappa})(x^3-\alpha_0)^{\kappa},  u (x^3-\alpha_0)^{\mu}\rangle \\
					&=\langle (x^3-\alpha_0)^\ell+u\sum\limits_{\kappa=0}^{\ell-1}(a_{0\kappa}x^2+b_{0\kappa}x+c_{0\kappa})(x^3-\alpha_0)^{\kappa}\rangle.
				\end{align*} 
				This contradicts the assumption of this case, $\mu < \Im$,  showing that $I$ is of Type D.
			\end{itemize}
		\end{itemize}
	\end{proof}
	The number $\Im$ is significant in Type D based on the categorization in Theorem \ref{4.6}. We now take a closer look at  $\Im$.

\begin{theorem}\label{8.2}
   Let $\mathcal{C}=\langle (x^3-\alpha_0)^\ell+u(x^3-\alpha_0)^{t}z(x)\rangle$ and $\Im$ be the smallest integer such that $u (x^3-\alpha_0)^{\Im} \in \mathcal{C}$.		
  Then
		\begin{center}
			$\Im=$
			$\begin{cases}
				\ell  & \text{if}\quad z(x)=0;\\
				min\{\ell, 2p^s +t-\ell\}  &\text{if}\quad z(x)\neq 0. \\
			\end{cases}$
		\end{center}
	\end{theorem}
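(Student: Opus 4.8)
The plan is to first rewrite the ambient ring in a normal form that makes annihilator computations routine. Put $T:=x^3-\alpha_0$. By the unique representation $h(x)=P(x)+uQ(x)$ of Equation~$(\ref{eqn 5.1})$, where $P,Q$ range over the $\mathbb{F}_{p^m}$-span $V$ of $\{x^iT^\kappa:0\le i\le 2,\ 0\le\kappa\le 2p^s-1\}$, together with $u^2=0$, one checks using $x^3=T+\alpha_0$ and $T^{2p^s}=0$ that $V$ is a subring and that $R_{\alpha_1,\alpha_3,\alpha_4}=V\oplus uV$ with the multiplication of $V[u]/\langle u^2\rangle$, where $V\cong\mathbb{F}_{p^m}[x]/\langle(x^3-\alpha_0)^{2p^s}\rangle$. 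Since $\alpha_0$ is a non-cube in $\mathbb{F}_{p^m}$ (the remark after Proposition~$\ref{5}$), necessarily $p^m\equiv 1\pmod 3$ (otherwise $\gcd(3,p^m-1)=1$ and every element of $\mathbb{F}_{p^m}$ would be a cube), so the cubic $x^3-\alpha_0$ has no root and hence is irreducible over $\mathbb{F}_{p^m}$; thus $V$ is a finite chain ring with maximal ideal $\langle T\rangle$ and nilpotency index $2p^s$, consistent with Lemma~$\ref{8.1}$. I will use two standard chain-ring facts about $V$: $\mathrm{Ann}_V(T^k)=T^{\,2p^s-k}V$ for $0\le k\le 2p^s$, and $T^j\in T^aV$ with $j<2p^s$ forces $j\ge a$ (since $1$ plus a nilpotent is a unit).

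The case $z(x)=0$ is then immediate. From $u\,T^\ell=u\cdot T^\ell\in\mathcal{C}$ we get $\Im\le\ell$; conversely, if $u\,T^j=f(x)\,T^\ell$ with $f=P+uQ$, comparing the $V$-component and the $u$-component of the two sides gives $P\,T^\ell=0$ and $Q\,T^\ell=T^j$ in $V$, so $T^j\in T^\ell V$ and $j\ge\ell$. Hence $\Im=\ell$. Now suppose $z(x)\ne 0$; by Theorem~$\ref{4.6}$, $z(x)$ is then a unit of $V$. Write $g:=T^\ell+u\,T^{t}z(x)$, so $\mathcal{C}=\langle g\rangle$. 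For the upper bounds I would exhibit two explicit multiples of $g$: first $u\,g=u\,T^\ell$ (using $u^2=0$), which gives $\Im\le\ell$; second $T^{\,2p^s-\ell}\,g=u\,T^{\,2p^s-\ell+t}z(x)$ (using $T^{2p^s}=0$), which after multiplying by $z(x)^{-1}$ gives $u\,T^{\,2p^s+t-\ell}\in\mathcal{C}$, so $\Im\le 2p^s+t-\ell$. Hence $\Im\le\min\{\ell,\ 2p^s+t-\ell\}$ (and $0\le t<\ell\le 2p^s-1$ makes this quantity lie in $\{1,\dots,2p^s-1\}$).

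For the reverse inequality, suppose $u\,T^j\in\mathcal{C}$, say $u\,T^j=f(x)\,g$ with $f=P+uQ$ and $P,Q\in V$. Expanding and using $u^2=0$,
\[
u\,T^j = P\,T^\ell + u\bigl(P\,T^{t}z(x)+Q\,T^\ell\bigr);
\]
uniqueness of the $V\oplus uV$ decomposition forces $P\,T^\ell=0$ and $P\,T^{t}z(x)+Q\,T^\ell=T^j$ in $V$. The first identity gives $P=T^{\,2p^s-\ell}P_1$ for some $P_1\in V$, and substituting into the second (recalling $z(x)$ is a unit of $V$) yields
\[
T^j\in T^{\,2p^s-\ell+t}V+T^\ell V\subseteq T^{\,\min\{2p^s+t-\ell,\ \ell\}}V,
\]
so $j\ge\min\{\ell,\ 2p^s+t-\ell\}$. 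Together with the upper bound, $\Im=\min\{\ell,\ 2p^s+t-\ell\}$.

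The only non-mechanical part of this argument is the structural preparation in the first paragraph: checking that $V$ is genuinely a subring, that it is a chain ring — which is precisely where the hypothesis that $\alpha_0$ is a non-cube is used (forcing $p^m\equiv 1\pmod 3$ and the irreducibility of $x^3-\alpha_0$) — and that multiplication by $g$ respects the $P+uQ$ normal form. After that, everything reduces to the familiar ``least power / annihilator in a chain ring'' computation already carried out in the proof of Theorem~$\ref{4.6}$, plus the two explicit witnesses $u$ and $T^{\,2p^s-\ell}z(x)^{-1}$. If one prefers to avoid introducing $V$, the same steps can be run directly from Equation~$(\ref{eqn 5.1})$, Lemma~$\ref{8.1}$ (nilpotency index $2p^s$ of $x^3-\alpha_0$), the fact that a nonzero quadratic over $\mathbb{F}_{p^m}$ is invertible modulo $x^3-\alpha_0$, and the fact that $1$ plus a nilpotent is a unit.
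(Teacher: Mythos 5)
Your proof is correct and follows essentially the same route as the paper: the same two explicit witnesses $u\cdot g=u(x^3-\alpha_0)^{\ell}$ and $(x^3-\alpha_0)^{2p^s-\ell}g\,z(x)^{-1}$ for the upper bound, and the same expansion of $f(x)\cdot g$ with the vanishing of the non-$u$ component giving the lower bound $\min\{\ell,\,2p^s+t-\ell\}$. The only difference is presentational: you make explicit the decomposition $R_{\alpha_1,\alpha_3,\alpha_4}=V\oplus uV$ with $V\cong\mathbb{F}_{p^m}[x]/\langle(x^3-\alpha_0)^{2p^s}\rangle$ a chain ring (via irreducibility of $x^3-\alpha_0$), which the paper uses implicitly when it discards the $V$-component and truncates the index ranges.
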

	\begin{proof}
		We have $\Im \leq \ell $ because
		\begin{equation*}
			u(x^3-\alpha_0)^\ell=u[(x^3-\alpha_0)^\ell+u(x^3-\alpha_0)^t z(x)] \in \mathcal{C}.
		\end{equation*}
		Suppose $z(x)=0$,  then $\mathcal{C}=\langle (x^3-\alpha_0)^\ell \rangle $,  implies $\Im =\ell$. Let us now examine the case $z(x) \neq 0$,  i.e.,  $z(x)$ is an invertible. As $u(x^3-\alpha_0)^ {\Im} \in \langle (x^3-\alpha_0)^\ell+u(x^3-\alpha_0)^t z(x) \rangle $ there exists a polynomial $f(x)=\sum\limits_{\kappa=0}^{2p^s-1}(a_{0\kappa}x^2+b_{0\kappa}x+c_{0\kappa})(x^3-\alpha_0)^{\kappa}+u\sum\limits_{\kappa=0}^{2p^s-1}(a_{1\kappa}x^2+b_{1\kappa}x+c_{1\kappa})(x^3-\alpha_0)^{\kappa} \in R_{\alpha_1, \alpha_3, \alpha_4} $, where $a_{0\kappa},  b_{0\kappa}, c_{0\kappa}, a_{1\kappa},  b_{1\kappa}, c_{1\kappa}  \in \mathbb{F}_{p^m}$ such that $u(x^3-\alpha_0)^{\Im} =f(x)[(x^3-\alpha_0)^\ell+u(x^3-\alpha_0)^t z(x)]$. Then 
		\begin{align*}
			u(x^3-\alpha_0)^{\Im} =&
			\begin{bmatrix}
				\sum\limits_{\kappa=0}^{2p^s-1}(a_{0\kappa}x^2+b_{0\kappa}x+c_{0\kappa})(x^3-\alpha_0)^{\kappa}+u\sum\limits_{\kappa=0}^{2p^s-1}(a_{1\kappa}x^2+b_{1\kappa}x+c_{1\kappa})(x^3-\alpha_0)^{\kappa}
			\end{bmatrix}\\
			& \times [(x^3-\alpha_0)^\ell +u(x^3-\alpha_0)^t z(x)] \\
			=&(x^3-\alpha_0)^\ell \sum\limits_{\kappa=0}^{2p^s-1}(a_{0\kappa}x^2+b_{0\kappa}x+c_{0\kappa})(x^3-\alpha_0)^{\kappa}\\
			&+u(x^3-\alpha_0)^\ell \sum\limits_{\kappa=0}^{2p^s-1}(a_{1\kappa}x^2+b_{1\kappa}x+c_{1\kappa})(x^3-\alpha_0)^{\kappa}\\
			&+u(x^3-\alpha_0)^t z(x)\sum\limits_{\kappa=0}^{2p^s-1}(a_{0\kappa}x^2+b_{0\kappa}x+c_{0\kappa})(x^3-\alpha_0)^{\kappa}\\
	       =&u(x^3-\alpha_0)^\ell \sum\limits_{\kappa=0}^{2p^s-\ell-1}(a_{1\kappa}x^2+b_{1\kappa}x+c_{1\kappa})(x^3-\alpha_0)^{\kappa}\\
			&+u(x^3-\alpha_0)^{2p^s+t-\ell} z(x)\sum\limits_{\kappa=0}^{\ell-1}(a_{0\kappa}x^2+b_{0\kappa}x+c_{0\kappa})(x^3-\alpha_0)^{\kappa}.
		\end{align*}
	 Thus, $\Im \geq min\{\ell, 2p^s+t-\ell\}$. Moreover, by Lemma \ref{8.1},
		\begin{align*}
			&[(x^3-\alpha_0)^\ell +u(x^3-\alpha_0)^t z(x)](x^3-\alpha_0)^{2p^s-\ell}=u(x^3-\alpha_0)^{2p^s+t-\ell}z(x) \\
   \implies & u(x^3-\alpha_0)^{2p^s+t-\ell}=[(x^3-\alpha_0)^\ell +u(x^3-\alpha_0)^t z(x)](x^3-\alpha_0)^{2p^s-\ell}z^{-1}(x) \in \mathcal{C}.
		\end{align*}
Thus, $\Im \leq 2p^s+t-\ell$. Hence $\Im = min\{\ell, 2p^s+t-\ell\}$.
	\end{proof}
\begin{theorem}\label{8.3}
		Let $\mathcal{C}$ be a $(\alpha_1 +  \alpha_3 v + \alpha_4 uv)$-constacyclic code of length $3p^s$ over $\mathcal{R}$ and let $\eta_\mathcal{C}$ denote the number of codewords in $\mathcal{C}$. Then
\begin{enumerate}
    \item Type A: Trivial ideals
        \begin{itemize}
            \item If $\mathcal{C}=\langle 0 \rangle$,  then $Res(\mathcal{C})=\langle 0 \rangle=Tor(\mathcal{C})$ and $\eta_\mathcal{C}=1$.
            \item If $\mathcal{C}=\langle 1 \rangle$,  then $Res(\mathcal{C})=\langle 1 \rangle=Tor(\mathcal{C})$ and $\eta_\mathcal{C}=p^{12mp^{s}}$.
        \end{itemize}
     \item Type B: Principal ideals with nonmonic generators
    \begin{itemize}
        \item If $\mathcal{C}=\langle u(x^3-\alpha_0)^\ell \rangle $,  where $0\leq  \ell \leq 2p^s-1$,  then $Res(\mathcal{C})=\langle 0 \rangle$,  $Tor(\mathcal{C})=\langle (x^3-\alpha_0)^\ell \rangle $ and $\eta_\mathcal{C}=p^{3m(2p^{s}-\ell)}$.
    \end{itemize}
    \item Type C: Principal ideals with monic generators
    \begin{itemize}
        \item If $\mathcal{C}=\langle (x^3-\alpha_0)^\ell \rangle $,  where $1\leq  \ell \leq 2p^s-1$,  then $Res(\mathcal{C})=\langle (x^3-\alpha_0)^\ell \rangle $ and $Tor(\mathcal{C})=\langle (x^3-\alpha_0)^{\Im} \rangle $ and $\eta_\mathcal{C}=p^{6m(2p^s-\ell)}.$
				
        \item If $\mathcal{C}=\langle (x^3-\alpha_0)^\ell +u(x^3-\alpha_0)^t z(x) \rangle $,  where $1\leq  \ell \leq 2p^s-1$,  $0\leq  t < \ell  $ and $z(x)$ is an invertible, then  $Res(\mathcal{C})=\langle (x^3-\alpha_0)^\ell \rangle $ and $Tor(\mathcal{C})=\langle (x^3-\alpha_0)^{\Im} \rangle $, where $\Im$ is the smallest integer such that $u(x^3-\alpha_0)^{\Im} \in \mathcal{C}$,  which is given by $\Im = min\{ \ell, 2p^s+t-\ell\}$. Thus,
        \begin{center}
            $\eta_\mathcal{C}=$
            $\begin{cases}
                p^{6m(2p^s-\ell)} & \text{if}\quad 1\leq \ell\leq p^s+\frac{t}{2};\\
                p^{3m(2p^s-t)} & \text{if} \quad  p^s+ \frac{t}{2} < \ell \leq 2p^s-1.
            \end{cases}$
        \end{center} 
        \end{itemize}
    \item Type D: Non-principal ideals \\
    If $\mathcal{C}=\langle (x^3-\alpha_0)^\ell +u(x^3-\alpha_0)^t z(x),  u (x^3-\alpha_0)^{\mu} \rangle $,  where $1\leq  \ell \leq 2p^s-1$,  $0\leq  t < \ell  $ and $z(x)$ is 0 or $z(x)$ is an invertible, and deg $(z(x))\leq\mu-t-1$ and
    \begin{center}
			$\mu<\Im=$
			$\begin{cases}
				\ell  & \text{if}\quad z(x)=0;\\
				min\{\ell, 2p^s +t-\ell\}  &\text{if}\quad z(x)\neq 0. \\
			\end{cases}$
		\end{center}
    Then $Res(\mathcal{C})=\langle (x^3-\alpha_0)^\ell \rangle $ and $Tor(\mathcal{C})=\langle (x^3-\alpha_0)^{\mu} \rangle $. Thus, $\eta_\mathcal{C}=p^{3m(4p^s-\ell-\mu)}$.
\end{enumerate}
\end{theorem}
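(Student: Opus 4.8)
The plan is to compute $\eta_{\mathcal{C}}$ from the residue and torsion codes. Write $S=\frac{(\mathbb{F}_{p^m}+v\mathbb{F}_{p^m})[x]}{\langle x^{3p^s}-(\alpha_1+\alpha_3 v)\rangle}$ for the image of $R_{\alpha_1,\alpha_3,\alpha_4}$ under reduction modulo $u$; as used in the proof of Theorem \ref{4.6} (see \cite{chen2016constacyclic}), $S$ is a chain ring whose ideals are precisely $\langle (x^3-\alpha_0)^{\kappa}\rangle$, $0\le\kappa\le 2p^s$. Since $\alpha_0$ is non-cube in $\mathbb{F}_{p^m}$ we necessarily have $p^m\equiv 1\pmod 3$, hence $x^3-\alpha_0$ is irreducible over $\mathbb{F}_{p^m}$ and the residue field $S/\langle x^3-\alpha_0\rangle\cong\mathbb{F}_{p^m}[x]/\langle x^3-\alpha_0\rangle$ has $p^{3m}$ elements; as $x^3-\alpha_0$ has nilpotency index $2p^s$ in $S$, every ideal satisfies $|\langle(x^3-\alpha_0)^{\kappa}\rangle|=p^{3m(2p^s-\kappa)}$. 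I would record this count first, together with $|R_{\alpha_1,\alpha_3,\alpha_4}|=p^{12mp^s}$, which follows from the $\mathbb{F}_{p^m}$-basis $\{x^j,ux^j,vx^j,uvx^j:0\le j<3p^s\}$ of $R_{\alpha_1,\alpha_3,\alpha_4}$.

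The second step is the factorization $\eta_{\mathcal{C}}=|Res(\mathcal{C})|\cdot|Tor(\mathcal{C})|$. Using the above basis one checks that $\mathrm{Ann}_{R_{\alpha_1,\alpha_3,\alpha_4}}(u)=\langle u\rangle$, so multiplication by $u$ induces a module isomorphism $S\xrightarrow{\ \sim\ }\langle u\rangle$. Feeding this into the exact sequence $0\to\mathcal{C}\cap\langle u\rangle\to\mathcal{C}\to Res(\mathcal{C})\to 0$ identifies $\mathcal{C}\cap\langle u\rangle$ with $Tor(\mathcal{C})$ and yields the product formula. Furthermore $Res(\mathcal{C})$ is obtained by reducing the generators of $\mathcal{C}$ modulo $u$, and $Tor(\mathcal{C})$, being an ideal of the chain ring $S$, equals $\langle(x^3-\alpha_0)^{j_0}\rangle$ where $j_0$ is the least integer with $u(x^3-\alpha_0)^{j_0}\in\mathcal{C}$.

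It then remains to read off $Res(\mathcal{C})$ and $j_0$ type by type, which is the same bookkeeping as in Theorem \ref{8.2}: multiply each generator by an arbitrary element of $R_{\alpha_1,\alpha_3,\alpha_4}$, reduce modulo $u$, and use $(x^3-\alpha_0)^{2p^s}=0$. For Type A the two factors are $\langle 0\rangle,\langle 0\rangle$ or $S,S$, giving $1$ and $p^{12mp^s}$. For Type B, $Res(\mathcal{C})=\langle 0\rangle$ and $j_0=\ell$, so $\eta_{\mathcal{C}}=p^{3m(2p^s-\ell)}$. For Type C, $Res(\mathcal{C})=\langle(x^3-\alpha_0)^{\ell}\rangle$ and $j_0=\Im$ with $\Im$ as computed in Theorem \ref{8.2}, so $\eta_{\mathcal{C}}=p^{3m(2p^s-\ell)}\cdot p^{3m(2p^s-\Im)}=p^{3m(4p^s-\ell-\Im)}$; this is $p^{6m(2p^s-\ell)}$ when $\Im=\ell$ (that is, $z(x)=0$, or $z(x)\ne 0$ with $2\ell\le 2p^s+t$) and $p^{3m(2p^s-t)}$ when $\Im=2p^s+t-\ell$ (that is, $2\ell>2p^s+t$). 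For Type D, $Res(\mathcal{C})=\langle(x^3-\alpha_0)^{\ell}\rangle$, and a short computation of $(\mathcal{C}:u)$ gives $Tor(\mathcal{C})=\langle(x^3-\alpha_0)^{\ell},\,(x^3-\alpha_0)^{2p^s+t-\ell}z(x),\,(x^3-\alpha_0)^{\mu}\rangle$, which collapses to $\langle(x^3-\alpha_0)^{\mu}\rangle$ precisely because $\mu<\Im$; hence $\eta_{\mathcal{C}}=p^{3m(2p^s-\ell)}\cdot p^{3m(2p^s-\mu)}=p^{3m(4p^s-\ell-\mu)}$.

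The main obstacle is the Type D case: one must show that $\mu$ is genuinely the least $j_0$ with $u(x^3-\alpha_0)^{j_0}\in\mathcal{C}$, and this is exactly where the hypothesis $\mu<\Im$ is indispensable (otherwise the generator $u(x^3-\alpha_0)^{\mu}$ would be redundant and the ideal would drop to Type C). The two remaining technical points are the verification that $\mathrm{Ann}(u)=\langle u\rangle$, which legitimizes the factorization $\eta_{\mathcal{C}}=|Res(\mathcal{C})|\cdot|Tor(\mathcal{C})|$ in this non-chain ambient ring, and the clean split of the Type C formula at $2\ell=2p^s+t$ according to which argument realizes $\min\{\ell,2p^s+t-\ell\}$.
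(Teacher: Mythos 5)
Your proposal is correct and follows essentially the same route the paper takes: the paper leaves Theorem \ref{8.3} without a written proof, relying on the residue/torsion decomposition $\vert\mathcal{C}\vert=\vert Res(\mathcal{C})\vert\cdot\vert Tor(\mathcal{C})\vert$ and the fact from \cite{chen2016constacyclic} that ideals of $\frac{(\mathbb{F}_{p^m}+v\mathbb{F}_{p^m})[x]}{\langle x^{3p^s}-(\alpha_1+\alpha_3 v)\rangle}$ are $\langle(x^3-\alpha_0)^{\kappa}\rangle$ with $p^{3m(2p^s-\kappa)}$ elements, exactly as you do. Your additional verifications (that $\mathrm{Ann}(u)=\langle u\rangle$ justifies the product formula, and that $Tor(\mathcal{C})$ collapses to $\langle(x^3-\alpha_0)^{\mu}\rangle$ in Type D because $\mu<\Im$) are sound and in fact supply details the paper omits.
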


	\subsection{Duals of $(\alpha_1+  \alpha_3 v + \alpha_4 uv)$-constacyclic codes}
	\begin{lemma}{\label{8.4}}
		If $\mathcal{C}=\langle(x^3-\alpha_0)^\ell+u(x^3-\alpha_0)^tz(x),  u(x^3-\alpha_0)^\mu \rangle$,  then the smallest positive integer $\varepsilon$ such that $u(x^3-\alpha_0)^{\varepsilon} \in \mathcal{A}(\mathcal{C})$ is $2p^s-\ell$. 
	\end{lemma}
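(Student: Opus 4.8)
The plan is to reduce membership of $u(x^3-\alpha_0)^{\varepsilon}$ in $\mathcal{A}(\mathcal{C})$ to a single vanishing condition for a power of $(x^3-\alpha_0)$, exploiting that $u^{2}=0$ annihilates all mixed terms.

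First I would record the auxiliary fact that, in $R_{\alpha_1,\alpha_3,\alpha_4}$, $u(x^3-\alpha_0)^{j}=0$ if and only if $j\geq 2p^{s}$. The forward implication follows from Lemma~\ref{8.1}: since $(x^3-\alpha_0)^{2p^{s}}=0$, for $j\geq 2p^{s}$ we get $u(x^3-\alpha_0)^{j}=u(x^3-\alpha_0)^{2p^{s}}(x^3-\alpha_0)^{j-2p^{s}}=0$. For the converse, I would observe that for $0\leq j\leq 2p^{s}-1$ the element $u(x^3-\alpha_0)^{j}$ is exactly the term of the unique representation~\eqref{eqn 5.1} whose sole nonzero $\mathbb{F}_{p^m}$-coefficient is the constant coefficient $c'_{1j}=1$ of the $u$-part, hence it is nonzero by uniqueness of that representation.

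Next, since an element lies in $\mathcal{A}(\mathcal{C})$ precisely when it annihilates each of the two generators $(x^3-\alpha_0)^{\ell}+u(x^3-\alpha_0)^{t}z(x)$ and $u(x^3-\alpha_0)^{\mu}$ of $\mathcal{C}$, I would compute, for a positive integer $\varepsilon$ and using $u^{2}=0$, that $u(x^3-\alpha_0)^{\varepsilon}\cdot u(x^3-\alpha_0)^{\mu}=u^{2}(x^3-\alpha_0)^{\varepsilon+\mu}=0$ automatically, while $u(x^3-\alpha_0)^{\varepsilon}\bigl[(x^3-\alpha_0)^{\ell}+u(x^3-\alpha_0)^{t}z(x)\bigr]=u(x^3-\alpha_0)^{\varepsilon+\ell}$, the cross term $u^{2}(x^3-\alpha_0)^{\varepsilon+t}z(x)$ vanishing whether $z(x)=0$ or $z(x)$ is a unit. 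Therefore $u(x^3-\alpha_0)^{\varepsilon}\in\mathcal{A}(\mathcal{C})$ if and only if $u(x^3-\alpha_0)^{\varepsilon+\ell}=0$, which by the auxiliary fact is equivalent to $\varepsilon+\ell\geq 2p^{s}$, i.e.\ $\varepsilon\geq 2p^{s}-\ell$. Since $\ell\leq 2p^{s}-1$, the integer $2p^{s}-\ell$ is positive, and hence it is the smallest such $\varepsilon$.

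The only real content is the nonvanishing of $u(x^3-\alpha_0)^{j}$ for $j<2p^{s}$; the rest is bookkeeping with $u^{2}=0$ and commutativity. I expect the one mild obstacle to be making the appeal to~\eqref{eqn 5.1} precise: one should note that \eqref{eqn 5.1} is a genuine $\mathbb{F}_{p^m}$-basis expansion of $R_{\alpha_1,\alpha_3,\alpha_4}$---its $12p^{s}$ free coefficients matching $\dim_{\mathbb{F}_{p^m}}R_{\alpha_1,\alpha_3,\alpha_4}=4\cdot 3p^{s}$---so that distinct coefficient tuples represent distinct ring elements; in particular the tuple supported only at $c'_{1j}=1$ is not $0$.
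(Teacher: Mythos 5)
Your proof is correct and follows essentially the same route as the paper's: use $u^{2}=0$ to reduce the annihilation condition to the vanishing of $u(x^3-\alpha_0)^{\varepsilon+\ell}$, and then invoke the nilpotency behaviour of $x^3-\alpha_0$ from Lemma~\ref{8.1}. You are in fact more thorough than the paper, whose proof records only the inequality $\varepsilon\geq 2p^s-\ell$ and leaves implicit both the verification that $u(x^3-\alpha_0)^{2p^s-\ell}$ does annihilate $\mathcal{C}$ and the nonvanishing of $u(x^3-\alpha_0)^{j}$ for $j<2p^s$, which you justify via the uniqueness of the expansion~\eqref{eqn 5.1}.
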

	\begin{proof}
		Suppose
		\begin{equation*}
			[(x^3-\alpha_0)^\ell+u(x^3-\alpha_0)^tz(x)]u(x^3-\alpha_0)^{\varepsilon}=0.
		\end{equation*}
		Then from Lemma \ref{8.1},  we have $\ell+\varepsilon \geq 2p^s$,  i.e.,  $\varepsilon \geq 2p^s-\ell$.
	\end{proof}
	\begin{theorem}\label{8.5}
		Let $\mathcal{C}=\langle u(x^3-\alpha_0)^\ell \rangle $ be a  $(\alpha_1 + \alpha_3 v + \alpha_4 uv)$-constacyclic code of length $3p^s$ over $\mathcal{R}$. Then $\mathcal{C}^{\perp}=\langle (x^3-\alpha_0^{-1})^{2p^s-\ell},  u \rangle$.
	\end{theorem}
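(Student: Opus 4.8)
The plan is to use the identity $\mathcal{C}^{\perp}=\mathcal{A}(\mathcal{C})^{*}$ recorded in the preliminaries (the proposition from \cite{chen2016constacyclic}), so the proof has two halves: first pin down the annihilator $\mathcal{A}(\mathcal{C})$ of $\mathcal{C}=\langle u(x^{3}-\alpha_{0})^{\ell}\rangle$ inside $R_{\alpha_{1},\alpha_{3},\alpha_{4}}$, and then apply the reciprocal-polynomial map $(\,\cdot\,)^{*}$ to it.

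For the annihilator I would claim $\mathcal{A}(\mathcal{C})=\langle (x^{3}-\alpha_{0})^{2p^{s}-\ell},\,u\rangle$. The inclusion $\supseteq$ is immediate: $u\cdot u(x^{3}-\alpha_{0})^{\ell}=u^{2}(\cdots)=0$, while $(x^{3}-\alpha_{0})^{2p^{s}-\ell}\cdot u(x^{3}-\alpha_{0})^{\ell}=u(x^{3}-\alpha_{0})^{2p^{s}}=0$ by the nilpotency index in Lemma \ref{8.1}. For $\subseteq$, take $f$ with $f\cdot u(x^{3}-\alpha_{0})^{\ell}=0$ and write it in the normal form \eqref{eqn 5.1} as $f=f_{0}(x)+uf_{1}(x)$, with $f_{0},f_{1}\in\mathbb{F}_{p^{m}}[x]$ of shape $\sum_{\kappa=0}^{2p^{s}-1}(a_{\kappa}x^{2}+b_{\kappa}x+c_{\kappa})(x^{3}-\alpha_{0})^{\kappa}$. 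Since $u^{2}=0$ the summand $uf_{1}(x)$ contributes nothing, so the hypothesis collapses to $u\,f_{0}(x)(x^{3}-\alpha_{0})^{\ell}=0$; reading this modulo $u$ gives $f_{0}(x)(x^{3}-\alpha_{0})^{\ell}=0$ in the chain ring $\frac{(\mathbb{F}_{p^{m}}+v\mathbb{F}_{p^{m}})[x]}{\langle x^{3p^{s}}-(\alpha_{1}+\alpha_{3}v)\rangle}$ that already appeared in Case 2 of the proof of Theorem \ref{4.6}, whose maximal ideal $\langle x^{3}-\alpha_{0}\rangle$ has nilpotency index $2p^{s}$. In a chain ring this forces $f_{0}(x)\in\langle (x^{3}-\alpha_{0})^{2p^{s}-\ell}\rangle$, hence $f\in\langle (x^{3}-\alpha_{0})^{2p^{s}-\ell},\,u\rangle$, and equality follows. (One could instead stay inside $R_{\alpha_{1},\alpha_{3},\alpha_{4}}$ and argue by the $\mathbb{F}_{p^{m}}$-linear independence of $\{x^{j}(x^{3}-\alpha_{0})^{\kappa}u:0\le j\le 2,\ 0\le\kappa\le 2p^{s}-1\}$ provided by \eqref{eqn 5.1}.)

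It then remains to compute $\langle (x^{3}-\alpha_{0})^{2p^{s}-\ell},\,u\rangle^{*}$. By the argument of Lemma \ref{3}, which rests only on the formal identities for the reciprocal map in Lemma \ref{2.6} (valid over $\mathcal{R}$) together with the fact that $(x^{3}-\alpha_{0})^{2p^{s}-\ell}$ is monic, this ideal equals $\langle \big((x^{3}-\alpha_{0})^{2p^{s}-\ell}\big)^{*},\,u^{*}\rangle$. Here $u$ is a constant, so $u^{*}=u$; and with $n=2p^{s}-\ell$,
\[
\big((x^{3}-\alpha_{0})^{n}\big)^{*}=x^{3n}(x^{-3}-\alpha_{0})^{n}=(1-\alpha_{0}x^{3})^{n}=(-\alpha_{0})^{n}(x^{3}-\alpha_{0}^{-1})^{n},
\]
and the unit $(-\alpha_{0})^{n}\in\mathbb{F}_{p^{m}}^{*}$ may be dropped from the generator. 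Since $\mathcal{C}^{\perp}$ is an $\alpha^{-1}$-constacyclic code it lives in $\frac{\mathcal{R}[x]}{\langle x^{3p^{s}}-\alpha^{-1}\rangle}$, and $(\alpha_{0}^{-1})^{p^{s}}=\alpha_{1}^{-1}$ is precisely the scalar playing the role of $\alpha_{0}$ for $\alpha^{-1}$; therefore $\mathcal{C}^{\perp}=\mathcal{A}(\mathcal{C})^{*}=\langle (x^{3}-\alpha_{0}^{-1})^{2p^{s}-\ell},\,u\rangle$.

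I expect the main obstacle to be the annihilator computation — specifically, being sure that the splitting $f=f_{0}(x)+uf_{1}(x)$ together with $u^{2}=0$ genuinely reduces the annihilator condition to a divisibility statement in the chain quotient modulo $u$, and that the nilpotency index used there is $2p^{s}$ rather than $p^{s}$. The reciprocal step is routine once the analogue of Lemma \ref{3} is in hand. As a consistency check, the answer gives $|\mathcal{C}^{\perp}|=p^{6mp^{s}+3m\ell}$, which together with $|\mathcal{C}|=p^{3m(2p^{s}-\ell)}$ from Theorem \ref{8.3} satisfies $|\mathcal{C}|\cdot|\mathcal{C}^{\perp}|=p^{12mp^{s}}=|\mathcal{R}|^{3p^{s}}$.
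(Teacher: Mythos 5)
Your proposal is correct, but it reaches the conclusion by a genuinely different route than the paper. The paper's proof is short: from $\mathcal{C}\subseteq\langle u\rangle$ and $\mathcal{C}\subseteq\langle (x^3-\alpha_0)^\ell\rangle$ it gets $\langle (x^3-\alpha_0^{-1})^{2p^s-\ell},u\rangle\subseteq\mathcal{C}^{\perp}$, and then forces equality by comparing cardinalities, using $\vert\mathcal{C}\vert=p^{3m(2p^s-\ell)}$ from Theorem \ref{8.3} and the size $p^{3m(2p^s+\ell)}$ of the candidate ideal. You instead compute $\mathcal{A}(\mathcal{C})$ exactly — reducing the annihilator condition modulo $u$ to a divisibility statement in the chain ring $\frac{(\mathbb{F}_{p^m}+v\mathbb{F}_{p^m})[x]}{\langle x^{3p^s}-(\alpha_1+\alpha_3v)\rangle}$, where $\langle x^3-\alpha_0\rangle$ has nilpotency index $2p^s$ — and then apply the reciprocal map via the analogue of Lemma \ref{3}. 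The step you flagged as the main obstacle does go through: multiplication by $u$ induces a bijection from that chain quotient onto $uR_{\alpha_1,\alpha_3,\alpha_4}$ (this is exactly the $\mathbb{F}_{p^m}$-linear independence of the basis monomials $ux^{j}(x^3-\alpha_0)^{\kappa}$ visible in the normal form \ref{eqn 5.1}), so $uf_0(x)(x^3-\alpha_0)^\ell=0$ in $R_{\alpha_1,\alpha_3,\alpha_4}$ is indeed equivalent to $f_0(x)(x^3-\alpha_0)^\ell=0$ in the quotient, and the chain-ring structure then gives $f_0\in\langle (x^3-\alpha_0)^{2p^s-\ell}\rangle$ modulo $u$. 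What each approach buys: the paper's argument is shorter but leans on the cardinality formula for the non-principal ideal $\langle (x^3-\alpha_0^{-1})^{2p^s-\ell},u\rangle$ (and on the identities $\langle u\rangle^{\perp}=\langle u\rangle$, $\langle(x^3-\alpha_0)^\ell\rangle^{\perp}=\langle(x^3-\alpha_0^{-1})^{2p^s-\ell}\rangle$, which themselves amount to small annihilator computations); yours is self-contained at the level of ideals, identifies $\mathcal{A}(\mathcal{C})$ explicitly (which is what the paper does anyway for the Type C and D duals in Theorems \ref{8.6} and \ref{8.7}), and uses the counting identity only as a sanity check.
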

	\begin{proof}
		As $\mathcal{C}\subseteq \langle u \rangle $ and $\mathcal{C} \subseteq \langle (x^3-\alpha_0)^\ell \rangle $,  we have that $\langle (x^3-\alpha_0^{-1})^{2p^s-\ell }\rangle =\langle (x^3-\alpha_0)^\ell \rangle ^{\perp} \subseteq \mathcal{C}^{\perp}$ and $\langle u \rangle =\langle u \rangle ^{\perp} \subseteq \mathcal{C}^{\perp}$. So $\langle (x^3-\alpha_0^{-1})^{2p^s-\ell },  u \rangle \subseteq \mathcal{C}^{\perp}$. We have $\vert \mathcal{C}\vert=p^{3m(2p^s-\ell)}$ and \begin{equation*}
			\vert \langle (x^3-\alpha_0^{-1})^{2p^s-\ell },  u \rangle \vert =p^{3m(2p^s+\ell)}.
		\end{equation*}
		Therefore, 
		\begin{equation*}
			\vert \mathcal{C}^{\perp} \vert=\frac{\vert \mathcal{R} \vert ^{3p^s}}{\vert \mathcal{C} \vert}=\frac{p^{12mp^s}}{p^{3m(3p^s-\ell)}}=p^{3m(2p^s+\ell)}=\vert \langle (x^3-\alpha_0^{-1})^{2p^s-\ell },  u \rangle \vert.
		\end{equation*}
		Hence,  $\mathcal{C}^{\perp}=\langle (x^3-\alpha_0^{-1})^{2p^s-\ell },  u \rangle$.
	\end{proof}
	\begin{theorem}\label{8.6}
		Let $\mathcal{C}$ be a $(\alpha_1 + \alpha_3 v + \alpha_4 uv)$-constacyclic code associate to the ideal $\mathcal{C}=\langle (x^3-\alpha_0)^\ell +u(x^3-\alpha_0)^t z(x) \rangle $,  where $z(x)$ is 0 or an invertible. Then the dual code $\mathcal{C}^\perp $ associate to the ideal $\mathcal{A}(\mathcal{C})^*$ is determined as follows:
		\begin{enumerate}
			\item If $z(x)$ is 0 and $1\leq \ell\leq p^s$,  then $\mathcal{A}(\mathcal{C})^* = \langle x^3-\alpha_0^{-1}  \rangle$.
			\item If $z(x)$ is an invertible and $1\leq \ell\leq p^s+\frac{t}{2}$,  then $\mathcal{A}(\mathcal{C})^*=\langle \chi^*_1(x) \rangle $,  where $\chi^*_3(x)=(-\alpha_0)^{2p^s-\ell}(x^3-\alpha_0^{-1})^{2p^s-\ell}-u(-\alpha_0)^{2p^s-2\ell+t}(x^3-\alpha_0^{-1})^{2p^s-2\ell+t} \sum \limits_{\kappa=0}^{\ell-t-1}(z_{2\kappa}x^2+z_{1\kappa}x+z_{0\kappa})(-\alpha_0)^{\kappa}(x^3-\alpha_0^{-1})^{\kappa} x^{3\ell-3t-3\kappa-2}$.
        \item If $z(x)$ is an invertible and $p^s+ \frac{t}{2} < \ell \leq 2p^s-1$,  then $\mathcal{A}(\mathcal{C})^*=\langle \chi^*_2(x),  (x^3-\alpha_0^{-1})^{2p^s-\ell} \rangle $,  where $\chi^*_5(x)= (-\alpha_0)^{\ell-t}(x^3-\alpha_0^{-1})^{\ell-t}-u\sum \limits_{\kappa=0}^{2p^s-\ell-1}(z_{2\kappa}x^2+z_{1\kappa}x+z_{0\kappa})(-\alpha_0)^{\kappa}(x^3-\alpha_0^{-1})^{\kappa} x^{3\ell-3t-3\kappa-2}$.
		\end{enumerate}
	\end{theorem}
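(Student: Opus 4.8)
The plan is to use the identity $\mathcal{C}^{\perp}=\mathcal{A}(\mathcal{C})^{*}$ (the proposition asserting that the dual of a constacyclic code is $\mathcal{A}(\mathcal{C})^{*}$): I first compute the annihilator ideal $\mathcal{A}(\mathcal{C})$ inside $R_{\alpha_1,\alpha_3,\alpha_4}$ and then apply the reciprocal‑polynomial map, which carries ideals of this ring to ideals of $\mathcal{R}[x]/\langle x^{3p^{s}}-\alpha^{-1}\rangle$, whose natural nilpotent element is $x^{3}-\alpha_0^{-1}$ since $(\alpha_0^{-1})^{p^{s}}=\alpha_1^{-1}$. Put $y=x^{3}-\alpha_0$. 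Since $\alpha_0$ is a non‑cube in $\mathbb{F}_{p^m}$ and $p\neq3$, necessarily $3\mid p^{m}-1$ and $y$ is irreducible over $\mathbb{F}_{p^m}$, so $\mathbb{F}_{p^m}[x]/\langle y^{2p^{s}}\rangle$ is a finite chain ring with maximal ideal $\langle y\rangle$ of nilpotency index $2p^{s}$, in agreement with Lemma~\ref{8.1}. Together with Lemma~\ref{8.1}, which exhibits $v$ as a unit multiple of $y^{p^{s}}$, this shows $R_{\alpha_1,\alpha_3,\alpha_4}$ is free of rank $2$ over that chain ring with basis $\{1,u\}$ and $u^{2}=0$. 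Writing a general element as $f_{0}(x)+uf_{1}(x)$ in the normal form used in proving that $R_{\alpha_1,\alpha_3,\alpha_4}$ is local, the equation $(f_{0}+uf_{1})\bigl((x^{3}-\alpha_0)^{\ell}+u(x^{3}-\alpha_0)^{t}z(x)\bigr)=0$ is, because $u^{2}=0$, equivalent to the pair $f_{0}y^{\ell}=0$ and $f_{1}y^{\ell}+f_{0}y^{t}z=0$.

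Solving this system over the chain ring is routine: the first equation gives $f_{0}=y^{2p^{s}-\ell}g_{0}$, and substitution into the second, followed by comparison of $y$‑adic valuations, splits into two regimes. If $2p^{s}-\ell+t\ge\ell$, i.e. $1\le\ell\le p^{s}+\frac{t}{2}$, one may take $f_{1}=-y^{2p^{s}-2\ell+t}g_{0}z$ modulo $\operatorname{ann}(y^{\ell})$, yielding the principal ideal $\mathcal{A}(\mathcal{C})=\langle(x^{3}-\alpha_0)^{2p^{s}-\ell}-u(x^{3}-\alpha_0)^{2p^{s}-2\ell+t}z(x)\rangle$ — multiplying this generator by $u$ shows $u(x^{3}-\alpha_0)^{2p^{s}-\ell}$ lies in it. If $2p^{s}-\ell+t<\ell$, i.e. $p^{s}+\frac{t}{2}<\ell\le 2p^{s}-1$, the second equation forces $y^{2\ell-2p^{s}-t}\mid g_{0}$, so $f_{0}\in\langle y^{\ell-t}\rangle$ and $\mathcal{A}(\mathcal{C})=\langle(x^{3}-\alpha_0)^{\ell-t}-uz(x),\ u(x^{3}-\alpha_0)^{2p^{s}-\ell}\rangle$, which is non‑principal because $\ell-t>2p^{s}-\ell$ in this range (its second generator matches Lemma~\ref{8.4}). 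When $z(x)=0$ the same computation gives $\mathcal{A}(\mathcal{C})=\langle(x^{3}-\alpha_0)^{2p^{s}-\ell}\rangle$. In each case I confirm the identification by counting: as in the proof of Theorem~\ref{8.5}, $|\mathcal{A}(\mathcal{C})|=|\mathcal{R}|^{3p^{s}}/|\mathcal{C}|=p^{12mp^{s}}/\eta_{\mathcal{C}}$ with $\eta_{\mathcal{C}}$ read off from Theorem~\ref{8.3}, and the cardinality of the explicit ideal just written — obtained again from Theorems~\ref{4.6} and~\ref{8.3} — agrees, forcing equality.

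Since $\mathcal{C}^{\perp}=\mathcal{A}(\mathcal{C})^{*}=\{g^{*}:g\in\mathcal{A}(\mathcal{C})\}$, the reciprocals of the generators above lie in $\mathcal{C}^{\perp}$; it remains to evaluate them and to verify by one further count that they generate $\mathcal{C}^{\perp}$. Using $y^{*}=1-\alpha_0x^{3}=-\alpha_0(x^{3}-\alpha_0^{-1})$ and Lemma~\ref{2.6}: for $z=0$ this gives $\mathcal{C}^{\perp}=\langle(x^{3}-\alpha_0^{-1})^{2p^{s}-\ell}\rangle$, which is part~(1) (with the exponent $2p^{s}-\ell$ on the displayed generator). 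For the two‑term generators, in the respective range the monic part $(x^{3}-\alpha_0)^{2p^{s}-\ell}$ (resp. $(x^{3}-\alpha_0)^{\ell-t}$) has strictly larger degree than the $u$‑part, so Lemma~\ref{2.6}(1) introduces $x$ to the power equal to that degree difference; carrying the reciprocal through each quadratic coefficient $z_{0\kappa}x^{2}+z_{1\kappa}x+z_{2\kappa}$ (which reverses to $z_{2\kappa}x^{2}+z_{1\kappa}x+z_{0\kappa}$) and through each $y^{\kappa}$ (contributing $(-\alpha_0)^{\kappa}(x^{3}-\alpha_0^{-1})^{\kappa}$) produces precisely the factors $x^{3\ell-3t-3\kappa-2}$ in $\chi^{*}_{1}(x)$ and $\chi^{*}_{2}(x)$, while $u(x^{3}-\alpha_0)^{2p^{s}-\ell}$ reciprocates — via $u^{*}=u$ and Lemma~\ref{2.6}(2) — to $u(x^{3}-\alpha_0^{-1})^{2p^{s}-\ell}$, the second generator in part~(3) (cf. Lemma~\ref{3}). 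The counting argument then closes parts~(2) and~(3).

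The main obstacle is the bookkeeping in this last step: one must fix the precise degree of $z(x)$ — by the generator reductions in Theorem~\ref{4.6} one may take $\deg_{y}z<\ell-t$ in the principal case and $\deg_{y}z<2p^{s}-\ell$ in the Type~D case — apply Lemma~\ref{2.6}(1) with the correct degree difference, and check that the resulting formula is unchanged when the leading coefficient of $z(x)$ vanishes. Two smaller points also need care: the irreducibility of $x^{3}-\alpha_0$, which underpins the chain‑ring valuation arguments in the annihilator computation, and the fact that $(gh)^{*}=g^{*}h^{*}$ can fail in $\mathcal{R}[x]$ when leading terms cancel — so membership of $g^{*}$ in $\mathcal{C}^{\perp}$ should be deduced only from $g\in\mathcal{A}(\mathcal{C})$ together with the definition of $\mathcal{A}(\mathcal{C})^{*}$, and the ideals should be pinned down by their cardinalities rather than by direct manipulation of generators.
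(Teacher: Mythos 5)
Your proposal is correct and follows essentially the same route as the paper: exhibit explicit elements of $\mathcal{A}(\mathcal{C})$, force equality of ideals via the cardinality identity $\vert\mathcal{A}(\mathcal{C})\vert=\vert\mathcal{C}^{\perp}\vert=\vert\mathcal{R}\vert^{3p^s}/\vert\mathcal{C}\vert$ using the codeword counts of Theorem \ref{8.3}, and then pass to $\mathcal{C}^{\perp}=\mathcal{A}(\mathcal{C})^{*}$ by the reciprocal map with the same degree bookkeeping. The only (cosmetic) difference is that you derive the annihilator generators $(x^3-\alpha_0)^{2p^s-\ell}-u(x^3-\alpha_0)^{2p^s-2\ell+t}z(x)$ and $(x^3-\alpha_0)^{\ell-t}-uz(x)$ systematically by solving the system $f_0y^{\ell}=0$, $f_1y^{\ell}+f_0y^{t}z=0$ over the underlying chain ring, whereas the paper writes these candidates down directly and verifies that they annihilate the generator of $\mathcal{C}$.
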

	\begin{proof}
		By Lemma \ref{8.4}, $2p^s-\ell$ is the smallest positive integer $\varepsilon$ such that $u(x^3-\alpha_0)^{\varepsilon} \in \mathcal{A}(\mathcal{C})$. If $(x^3-\alpha_0)^{\rho}+u(x^3-\alpha_0)^{\xi}r(x)\in \mathcal{A}(\mathcal{C})$, where $r(x)$ is 0 or an invertible then [$(x^3-\alpha_0)^{\rho}+u(x^3-\alpha_0)^{\xi}r(x)][(x^3-\alpha_0)^{\ell}+u(x^3-\alpha_0)^{t}z(x)]=(x^3-\alpha_0)^{\rho+\ell}+u(x^3-\alpha_0)^{\rho+t}z(x)+u(x^3-\alpha_0)^{\xi +\ell}r(x)=0$.
\begin{enumerate}
    \item  The proof is obvious. 
    \item  If $z(x)$ is an invertible and $1\leq \ell\leq p^s+\frac{t}{2}$, then $\mathcal{C}=\langle (x^3-\alpha_0)^\ell+u(x^3-\alpha_0)^t z(x) \rangle $. If we choose $\rho=2p^s-\ell$,  $\xi=2p^s-2\ell+t$ and $r(x)=-z(x)$ then by Lemma \ref{8.1},  $\chi_1(x)=(x^3-\alpha_0)^{2p^s-\ell}-u(x^3-\alpha_0)^{2p^s-2\ell-t}z(x) \in \mathcal{A}(\mathcal{C})$. Therefore, $\langle \chi_1(x) \rangle \subseteq \mathcal{A}(\mathcal{C})$. Furthermore, 
    \begin{equation*}
        p^{6m\ell}=\vert \langle \chi_1(x) \rangle \vert\leq \vert \mathcal{A}(\mathcal{C}) \vert =\vert \mathcal{A}(\mathcal{C})^{*} \vert =\vert \mathcal{C}^{\perp}\vert=\frac{p^{12mp^s}}{\vert \mathcal{C} \vert}=\frac{p^{12mp^s}}{p^{6m(2p^s-\ell)}}=p^{6m\ell}.
    \end{equation*}
Thus, $\langle \chi_1(x) \rangle=\mathcal{A}(\mathcal{C})$. 
Let $z(x)=\sum\limits_{\kappa}^{}(z_{0\kappa}x^2+z_{1\kappa}x+z_{2\kappa})(x^3-\alpha_0)^{\kappa}$, where $z_{0\kappa},  z_{1\kappa}, z_{2\kappa} \in \mathbb{F}_{p^m}$ and $z_{00}x^2+z_{10}x+z_{20} \neq 0$. Here $\kappa \leq \Im-t-1=\ell-t-1$. Then 
\begin{align*}
    \chi_1(x)&=(x^3-\alpha_0)^{2p^s-\ell}-u(x^3-\alpha_0)^{2p^s-2\ell+t}z(x)\\
    &=(x^3-\alpha_0)^{2p^s-\ell}-u(x^3-\alpha_0)^{2p^s-2\ell+t}\sum\limits_{\kappa=0}^{\ell-t-1}(z_{0\kappa}x^2+z_{1\kappa}x+z_{2\kappa})(x^3-\alpha_0)^{\kappa}.
\end{align*}
Then $\mathcal{A}(\mathcal{C})^{*}=\langle \chi^*_1(x) \rangle $,  where $\chi^*_1(x)=(-\alpha_0)^{2p^s-\ell}(x^3-\alpha_0^{-1})^{2p^s-\ell}-u(-\alpha_0)^{2p^s-2\ell+t}(x^3-\alpha_0^{-1})^{2p^s-2\ell+t} \sum \limits_{\kappa=0}^{\ell-t-1}(z_{2\kappa}x^2+z_{1\kappa}x+z_{0\kappa})(-\alpha_0)^{\kappa}(x^3-\alpha_0^{-1})^{\kappa} x^{3\ell-3t-3\kappa-2}$.
\item If $z(x)$ is an invertible and $p^s+ \frac{t}{2} < \ell \leq 2p^s-1$,  then $\mathcal{C}=\langle (x^3-\alpha_0)^\ell+u(x^3-\alpha_0)^t z(x) \rangle $. As above $\chi_2(x)=(x^3-\alpha_0)^{\ell-t}-u z(x)\in \mathcal{A}(\mathcal{C})$. Therefore, $\langle \chi_2(x),  u(x^3-\alpha_0)^{2p^s-\ell} \rangle \subseteq \mathcal{A}(\mathcal{C})$. Furthermore, 
\begin{equation*}
    p^{3m(2p^s+t)}=\vert \langle \chi_2(x), u(x^3-\alpha_0)^{2p^s-\ell}  \rangle \vert\leq \vert \mathcal{A}(\mathcal{C}) \vert =\vert \mathcal{A}(\mathcal{C})^{*} \vert =\vert \mathcal{C}^{\perp}\vert=\frac{p^{12mp^s}}{\vert \mathcal{C} \vert}=\frac{p^{12mp^s}}{p^{3m(2p^s-t)}}=p^{3m(2p^s+t)}.
\end{equation*}
Thus, $\langle \chi_2(x), u(x^3-\alpha_0)^{2p^s-\ell}  \rangle=\mathcal{A}(\mathcal{C})$.
Let $z(x)=\sum\limits_{\kappa}^{}(z_{0\kappa}x^2+z_{1\kappa}x+z_{2\kappa})(x^3-\alpha_0)^{\kappa}$ where $z_{0\kappa},  z_{1\kappa}, z_{2\kappa} \in \mathbb{F}_{p^m}$ and $z_{00}x^2+z_{10}x+z_{20} \neq 0$. Here $\kappa \leq\Im-t-1=2p^s-\ell-1$. Then 
\begin{align*}
    \chi_2(x)&=(x^3-\alpha_0)^{\ell-t}-uz(x)\\
    &=(x^3-\alpha_0)^{\ell-t}-u\sum\limits_{\kappa=0}^{2p^s-\ell-1}(z_{0\kappa}x^2+z_{1\kappa}x+z_{2\kappa})(x^3-\alpha_0)^{\kappa}.
\end{align*}
Then  $ \mathcal{A}(\mathcal{C})^{*}=\langle \chi^*_2(x),  (x^3-\alpha_0^{-1})^{2p^s-\ell} \rangle $,  where $\chi^*_2(x)= (-\alpha_0)^{\ell-t}(x^3-\alpha_0^{-1})^{\ell-t}-u\sum \limits_{\kappa=0}^{2p^s-\ell-1}(z_{2\kappa}x^2+z_{1\kappa}x+z_{0\kappa})(-\alpha_0)^{\kappa}(x^3-\alpha_0^{-1})^{\kappa} x^{3\ell-3t-3\kappa-2}$.
\end{enumerate}
\end{proof}
\begin{theorem}\label{8.7}
    If $\mathcal{C}=\langle (x^3-\alpha_0)^\ell +u(x^3-\alpha_0)^t z(x),  u (x^3-\alpha_0)^{\mu} \rangle $,  where $0\leq  \ell \leq 2p^s-1$,  $0\leq  t < \ell  $ and $z(x)$ is 0 or $z(x)$ is an invertible, then  the dual code $\mathcal{C}^\perp $ associate to the ideal $\mathcal{A}(\mathcal{C})^*$ is determined as follows:
\begin{enumerate}
    \item If $z(x)$ is 0 then $ \mathcal{A}(\mathcal{C})^{*} = \langle \varphi_1^*(x), u(x^3-\alpha_0^{-1})^{2p^s-\ell }  \rangle$,  where $ \varphi_1^*(x)= (-\alpha_0)^{2p^s-\mu}(x^3-\alpha_0^{-1})^{2p^s-\mu} $.
    \item If $z(x)$ is an invertible  then $ \mathcal{A}(\mathcal{C})^{*} = \langle \varphi_2^*(x), u(x^3-\alpha_0^{-1})^{2p^s-\ell }  \rangle$,  where $ \varphi_2^*(x)= (-\alpha_0)^{2p^s-\mu}(x^3-\alpha_0^{-1})^{2p^s-\mu}-u(-\alpha_0)^{2p^s+t-\ell-\mu}(x^3-\alpha_0^{-1})^{2p^s+t-\ell-\mu}
    \sum\limits_{\kappa=0}^{\mu-t-1}(z_{2\kappa}x^2+z_{1\kappa}x+z_{0\kappa})(-\alpha_0)^{\kappa}(x^3-\alpha_0^{-1})^{\kappa} x^{3\ell-3t-3\kappa-2} $.
\end{enumerate}  
\end{theorem}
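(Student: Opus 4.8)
The plan is to mirror the strategy of Theorems~\ref{8.5} and~\ref{8.6}: first pin down the annihilator $\mathcal{A}(\mathcal{C})$ exactly, then transport it through the reciprocal-polynomial map. Since $\mathcal{C}^{\perp}=\mathcal{A}(\mathcal{C})^{*}$ and $\vert\mathcal{C}\vert\cdot\vert\mathcal{C}^{\perp}\vert=\vert\mathcal{R}\vert^{3p^{s}}=p^{12mp^{s}}$, the Type~D count in Theorem~\ref{8.3} gives $\vert\mathcal{A}(\mathcal{C})\vert=\vert\mathcal{C}^{\perp}\vert=p^{3m(\ell+\mu)}$, and this is the cardinality any proposed generating set for $\mathcal{A}(\mathcal{C})$ must meet. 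It is worth recording at the outset that $\mathcal{C}^{\perp}$ is an $\alpha^{-1}$-constacyclic code whose defining unit has constant term $\alpha_{1}^{-1}=(\alpha_{0}^{-1})^{p^{s}}$, which is why $x^{3}-\alpha_{0}^{-1}$ is the relevant nilpotent element in the dual ring.

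Next I would produce an explicit annihilating element. Looking for $\varphi(x)=(x^{3}-\alpha_{0})^{\rho}+u(x^{3}-\alpha_{0})^{\xi}r(x)$ killing both generators of $\mathcal{C}$: multiplying by $u(x^{3}-\alpha_{0})^{\mu}$ and invoking Lemma~\ref{8.1} forces $\rho\geq 2p^{s}-\mu$, so one takes $\rho=2p^{s}-\mu$; then in the product with $(x^{3}-\alpha_{0})^{\ell}+u(x^{3}-\alpha_{0})^{t}z(x)$ the pure term $(x^{3}-\alpha_{0})^{\rho+\ell}$ vanishes because $\rho+\ell>2p^{s}$ (here $\mu<\ell$), and the remaining $u(x^{3}-\alpha_{0})^{\rho+t}z(x)+u(x^{3}-\alpha_{0})^{\xi+\ell}r(x)$ is annihilated by $r(x)=-z(x)$, $\xi=2p^{s}-\mu+t-\ell$, a legitimate non-negative exponent smaller than $\rho$ precisely because $\mu<2p^{s}+t-\ell$ and $t<\ell$; when $z(x)=0$ one simply takes $\varphi_{1}(x)=(x^{3}-\alpha_{0})^{2p^{s}-\mu}$. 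Together with $u(x^{3}-\alpha_{0})^{2p^{s}-\ell}\in\mathcal{A}(\mathcal{C})$ from Lemma~\ref{8.4}, this gives $\langle\varphi(x),u(x^{3}-\alpha_{0})^{2p^{s}-\ell}\rangle\subseteq\mathcal{A}(\mathcal{C})$. To upgrade this to equality I would recognize $\langle\varphi(x),u(x^{3}-\alpha_{0})^{2p^{s}-\ell}\rangle$ as one of the ideals of Theorem~\ref{4.6}, with parameters $\ell\mapsto 2p^{s}-\mu$, $t\mapsto 2p^{s}-\mu+t-\ell$, $\mu\mapsto 2p^{s}-\ell$, $z\mapsto -z$ (whose $(x^{3}-\alpha_{0})$-degree is $\leq\mu-t-1$, exactly the Type~D bound for the new parameters); Theorem~\ref{8.3} then evaluates its order to $p^{3m(4p^{s}-(2p^{s}-\mu)-(2p^{s}-\ell))}=p^{3m(\ell+\mu)}=\vert\mathcal{A}(\mathcal{C})\vert$, forcing equality. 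A small check is needed that the Type~D inequalities survive the parameter change, which comes down to $\mu<\ell$ and $t\geq 0$; in the borderline case $t=0$ the candidate is a Type~C ideal rather than Type~D, but Theorem~\ref{8.3} still returns the same count $p^{3m(\ell+\mu)}$.

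Finally I would apply the reciprocal map. By Lemma~\ref{3}, $\mathcal{A}(\mathcal{C})^{*}=\langle\varphi^{*}(x),\,u\bigl((x^{3}-\alpha_{0})^{2p^{s}-\ell}\bigr)^{*}\rangle$. From $(x^{3}-\alpha_{0})^{*}=1-\alpha_{0}x^{3}=-\alpha_{0}(x^{3}-\alpha_{0}^{-1})$ and Lemma~\ref{2.6}(2) one gets $\bigl((x^{3}-\alpha_{0})^{k}\bigr)^{*}=(-\alpha_{0})^{k}(x^{3}-\alpha_{0}^{-1})^{k}$, so the $u$-generator becomes $u(x^{3}-\alpha_{0}^{-1})^{2p^{s}-\ell}$ after absorbing the unit scalar, and for $z(x)=0$ this already yields $\varphi_{1}^{*}(x)=(-\alpha_{0})^{2p^{s}-\mu}(x^{3}-\alpha_{0}^{-1})^{2p^{s}-\mu}$, proving~(1). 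For $z(x)$ invertible, write $z(x)=\sum_{\kappa}(z_{0\kappa}x^{2}+z_{1\kappa}x+z_{2\kappa})(x^{3}-\alpha_{0})^{\kappa}$ with $0\leq\kappa\leq\mu-t-1$ and apply Lemma~\ref{2.6}(1) summand by summand to $\varphi_{2}(x)=(x^{3}-\alpha_{0})^{2p^{s}-\mu}-u(x^{3}-\alpha_{0})^{2p^{s}-\mu+t-\ell}z(x)$: the monic summand has degree $3(2p^{s}-\mu)$, the $\kappa$-th summand of the $u$-part has degree $3(2p^{s}-\mu+t-\ell+\kappa)+2$, so the padding factor supplied by Lemma~\ref{2.6}(1) is $x^{3(\ell-t-\kappa)-2}=x^{3\ell-3t-3\kappa-2}$, while reversing each quadratic block sends $z_{0\kappa}x^{2}+z_{1\kappa}x+z_{2\kappa}$ to $z_{2\kappa}x^{2}+z_{1\kappa}x+z_{0\kappa}$; assembling the pieces and factoring out $(-\alpha_{0})^{2p^{s}+t-\ell-\mu}(x^{3}-\alpha_{0}^{-1})^{2p^{s}+t-\ell-\mu}$ reproduces exactly the stated $\varphi_{2}^{*}(x)$, proving~(2).

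The step I expect to be the main obstacle is precisely this degree bookkeeping: one must be sure that, \emph{as a polynomial in $\mathcal{R}[x]$ before reduction modulo $x^{3p^{s}}-\alpha$}, the monic term of $\varphi_{2}(x)$ genuinely has the largest degree, so that $\varphi_{2}^{*}$ is formed relative to $\deg=3(2p^{s}-\mu)$, and that the bound $\deg(z)\leq\mu-t-1$ keeps every padding exponent $3\ell-3t-3\kappa-2$ non-negative; a slip in these ranges changes the exponents in $\varphi_{2}^{*}(x)$. A secondary point needing care is verifying that the candidate annihilator really lands among the ideals of Theorem~\ref{4.6} with admissible parameters, including the degenerate cases $\mu=0$ (where $\varphi$ collapses to $0$ and $\mathcal{A}(\mathcal{C})$ is principal of Type~B) and $t=0$ mentioned above. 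Everything else — the cardinality comparison and the arithmetic in $\mathcal{R}$ — is routine given Lemmas~\ref{8.1},~\ref{8.4} and Theorem~\ref{8.3}.
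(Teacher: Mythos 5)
Your proposal is correct and follows essentially the same route as the paper: exhibit $\varphi(x)=(x^3-\alpha_0)^{2p^s-\mu}-u(x^3-\alpha_0)^{2p^s+t-\ell-\mu}z(x)$ together with $u(x^3-\alpha_0)^{2p^s-\ell}$ inside $\mathcal{A}(\mathcal{C})$, force equality by the cardinality count $\vert\mathcal{A}(\mathcal{C})\vert=p^{3m(\ell+\mu)}$, and then push everything through the reciprocal map with the padding exponents $3\ell-3t-3\kappa-2$. Your sign choice $r(x)=-z(x)$ is in fact the correct one (the paper's displayed $\varphi_3$ carries a $+$ that would fail to annihilate the first generator when $t<\mu$, although its final $\varphi_3^*$ agrees with your minus sign), and your extra verification that the candidate annihilator sits in the Theorem \ref{4.6} classification with the transformed parameters merely makes the cardinality step more explicit than the paper's.
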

\begin{proof}
    \begin{enumerate}
        \item If $z(x)$ is 0 then $\mathcal{C}=\langle (x^3-\alpha_0)^\ell, u (x^3-\alpha_0)^{\mu}\rangle $. Clearly  $\varphi_1(x)=(x^3-\alpha_0)^{2p^s-\mu} \in \mathcal{A}(\mathcal{C})$. Therefore, $\langle \varphi_1(x),  u(x^3-\alpha_0)^{2p^s-\ell} \rangle \subseteq \mathcal{A}(\mathcal{C})$. Furthermore, 
        \begin{equation*}
        p^{3m(\ell+\mu)}=\vert \langle \varphi_1(x), u(x^3-\alpha_0)^{2p^s-\ell}  \rangle \vert\leq \vert \mathcal{A}(\mathcal{C}) \vert =\vert \mathcal{A}(\mathcal{C})^{*} \vert =\vert \mathcal{C}^{\perp}\vert=\frac{p^{12mp^s}}{\vert \mathcal{C} \vert}=\frac{p^{12mp^s}}{p^{3m(4p^s-\ell-\mu)}}=p^{3m(\ell+\mu)}.
        \end{equation*}
    Thus, $\langle \varphi_1(x), u(x^3-\alpha_0)^{2p^s-\ell}  \rangle=\mathcal{A}(\mathcal{C})$ and  $ \mathcal{A}(\mathcal{C})^{*} = \langle \varphi_1^*(x), u(x^3-\alpha_0^{-1})^{2p^s-\ell }  \rangle$,  where $ \varphi_1^*(x)= (-\alpha_0)^{2p^s-\mu}(x^3-\alpha_0^{-1})^{2p^s-\mu} $.
    \item If $z(x)$ is an invertible then $\mathcal{C}=\langle (x^3-\alpha_0)^\ell +u(x^3-\alpha_0)^t z(x),  u (x^3-\alpha_0)^{\mu} \rangle $. Clearly $\varphi_3(x)=(x^3-\alpha_0)^{2p^s-\mu}+u(x^3-\alpha_0)^{2p^s+t-\ell-\mu}z(x)\in \mathcal{A}(\mathcal{C})$. Therefore, $\langle \varphi_3(x),  u(x^3-\alpha_0)^{2p^s-\ell} \rangle \subseteq \mathcal{A}(\mathcal{C})$. Furthermore, 
\begin{equation*}
    p^{3m(\ell+\mu)}=\vert \langle \varphi_3(x), u(x^3-\alpha_0)^{2p^s-\ell}  \rangle \vert\leq \vert \mathcal{A}(\mathcal{C}) \vert =\vert \mathcal{A}(\mathcal{C})^{*} \vert =\vert \mathcal{C}^{\perp}\vert=\frac{p^{12mp^s}}{\vert \mathcal{C} \vert}=\frac{p^{12mp^s}}{p^{3m(4p^s-\ell-\mu)}}=p^{3m(\ell+\mu)}.
\end{equation*}
Thus, $\langle \varphi_3(x), u(x^3-\alpha_0)^{2p^s-\ell}  \rangle=\mathcal{A}(\mathcal{C})$. Let $z(x)=\sum\limits_{\kappa}^{}(z_{0\kappa}x^2+z_{1\kappa}x+z_{2\kappa})(x^3-\alpha_0)^{\kappa}$, where $z_{0\kappa},  z_{1\kappa}, z_{2\kappa} \in \mathbb{F}_{p^m}$ and $z_{00}x^2+z_{10}x+z_{20} \neq 0$. Here $\kappa \leq\mu-t-1$. Then 
\begin{align*}
    \varphi_3(x)&=(x^3-\alpha_0)^{2p^s-\mu}+u(x^3-\alpha_0)^{2p^s+t-\ell-\mu}z(x)\\
    &=(x^3-\alpha_0)^{2p^s-\mu}+u(x^3-\alpha_0)^{2p^s+t-\ell-\mu}\sum\limits_{\kappa=0}^{\mu-t-1}(z_{0\kappa}x^2+z_{1\kappa}x+z_{2\kappa})(x^3-\alpha_0)^{\kappa}.
\end{align*}
Thus,  $ \mathcal{A}(\mathcal{C})^{*} = \langle \varphi_3^*(x), u(x^3-\alpha_0^{-1})^{2p^s-\ell }  \rangle$,  where $ \varphi_3^*(x)= (-\alpha_0)^{2p^s-\mu}(x^3-\alpha_0^{-1})^{2p^s-\mu}-u(-\alpha_0)^{2p^s+t-\ell-\mu}(x^3-\alpha_0^{-1})^{2p^s+t-\ell-\mu}
\sum\limits_{\kappa=0}^{\mu-t-1}(z_{2\kappa}x^2+z_{1\kappa}x+z_{0\kappa})(-\alpha_0)^{\kappa}(x^3-\alpha_0^{-1})^{\kappa} x^{3\ell-3t-3\kappa-2} $.
\end{enumerate}
\end{proof}

\begin{remark}
    By interchanging the roles of $u$ and $v$ in the above section we get the structure, number of codewords, and duals of $(\alpha_1 + \alpha_2 u + \alpha_4 uv)$-constacyclic codes of length $3p^s$ over $\mathcal{R}$.
\end{remark}

\section{  $\alpha=\alpha_1 + \alpha_2 u + \alpha_3 v + \alpha_4 uv$ is non-cube in $\mathcal{R}$}

    Let  $\alpha=\alpha_1 + \alpha_2 u + \alpha_3 v + \alpha_4 uv$ be a non-cube in $\mathcal{R}$, where $\alpha_1 , \alpha_2, \alpha_3 \in \mathbb{F}^*_{p^m}$ and $\alpha_4 \in \mathbb{F}_{p^m}$. Since $\alpha_1 \in\mathbb{F}^*_{p^m}$,  $\alpha=\alpha_1 + \alpha_2 u + \alpha_3 v + \alpha_4 uv$ is a unit in $\mathcal{R}$. As is well known, $\alpha$-constacyclic codes of length $3p^s$ over $\mathcal{R}$ are ideals of the quotient ring $R_{\alpha_1,  \alpha_2,  \alpha_3, \alpha_4} =\frac{\mathcal{R}[x]}{\langle x^{3p^s}-\alpha \rangle}$.
	
    \begin{proposition}\label{2}
        $\alpha=\alpha_1 + \alpha_2 u + \alpha_3 v + \alpha_4 uv$ is non-cube in $\mathcal{R}$ if and only if  $\alpha_1$ is non-cube in $\mathbb{F}_{p^m}$.
    \end{proposition}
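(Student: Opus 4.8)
The plan is to prove both implications by contraposition, using nothing more than the explicit expansion of a cube in $\mathcal{R}$. First I would record, for an arbitrary $\beta = \beta_1 + \beta_2 u + \beta_3 v + \beta_4 uv$ with $\beta_i \in \mathbb{F}_{p^m}$, the identity
\begin{equation*}
\beta^3 = \beta_1^3 + 3\beta_1^2\beta_2\, u + 3\beta_1^2\beta_3\, v + \bigl(3\beta_1^2\beta_4 + 6\beta_1\beta_2\beta_3\bigr) uv,
\end{equation*}
obtained by multiplying out and applying only the defining relations $u^2 = v^2 = 0$ and $uv = vu$; this is exactly the computation already carried out in the proof of Proposition~\ref{5}.

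For the direction ``$\alpha$ non-cube in $\mathcal{R}$ $\Rightarrow$ $\alpha_1$ non-cube in $\mathbb{F}_{p^m}$'' I would argue the contrapositive. Suppose $\alpha_1 = c^3$ for some $c \in \mathbb{F}_{p^m}$. Since $\alpha_1 \in \mathbb{F}_{p^m}^*$ we have $c \neq 0$, and since $p \neq 3$ the element $3$ is a unit in $\mathbb{F}_{p^m}$, so $3c^2 \neq 0$. Taking $\beta_1 = c$, the three equations $3\beta_1^2\beta_2 = \alpha_2$, $3\beta_1^2\beta_3 = \alpha_3$, $3\beta_1^2\beta_4 + 6\beta_1\beta_2\beta_3 = \alpha_4$ can be solved successively for $\beta_2,\beta_3,\beta_4 \in \mathbb{F}_{p^m}$ (explicitly $\beta_2 = 3^{-1}c^{-2}\alpha_2$, $\beta_3 = 3^{-1}c^{-2}\alpha_3$, $\beta_4 = 3^{-1}c^{-2}(\alpha_4 - 6c\beta_2\beta_3)$); with these choices $\beta^3 = \alpha$, so $\alpha$ is a cube in $\mathcal{R}$, contradicting the hypothesis.

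For the converse, ``$\alpha_1$ non-cube $\Rightarrow$ $\alpha$ non-cube'', I would again take the contrapositive: if $\alpha = \beta^3$ for some $\beta \in \mathcal{R}$, then comparing the $\mathbb{F}_{p^m}$-components (the coefficient of $1$) on both sides of the cube expansion gives $\alpha_1 = \beta_1^3$, so $\alpha_1$ is a cube in $\mathbb{F}_{p^m}$. Combining the two contrapositives yields the stated equivalence.

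Since the whole argument reduces to elementary coefficient bookkeeping, there is no genuine obstacle; the only points worth flagging are that the hypotheses $p \neq 3$ (so $3$ is invertible in $\mathbb{F}_{p^m}$) and $\alpha_1 \neq 0$ are precisely what make the system for $\beta_2,\beta_3,\beta_4$ solvable. In fact this proposition is simply Proposition~\ref{5} with $\alpha_2$ allowed to be nonzero, and the proof of Proposition~\ref{5} already handles the general four-coefficient case verbatim, so I would restate that argument here for completeness rather than introduce anything new.
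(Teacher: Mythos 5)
Your proof is correct and follows essentially the same route as the paper's: both directions reduce to the explicit cube expansion $(\beta_1+\beta_2 u+\beta_3 v+\beta_4 uv)^3=\beta_1^3+3\beta_1^2\beta_2 u+3\beta_1^2\beta_3 v+(3\beta_1^2\beta_4+6\beta_1\beta_2\beta_3)uv$, with the same explicit formulas $\beta_2=3^{-1}c^{-2}\alpha_2$, $\beta_3=3^{-1}c^{-2}\alpha_3$, $\beta_4=3^{-1}c^{-2}(\alpha_4-6c\beta_2\beta_3)$ for one direction and coefficient comparison for the other. Your explicit remark that $p\neq 3$ and $\alpha_1\neq 0$ are what make the system solvable is a small but worthwhile clarification that the paper leaves implicit.
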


    \begin{proof}
        Suppose $\alpha=\alpha_1 + \alpha_2 u + \alpha_3 v + \alpha_4 uv$ is non-cube in $\mathcal{R}$ and assume $\alpha_1= \alpha_1^{\prime^{3}}\in \mathbb{F}_{p^m}$. Then $\alpha_1 + \alpha_2 u + \alpha_3 v + \alpha_4 uv=(\alpha_1^{\prime} +\alpha_2^{\prime}u+ \alpha_3^{\prime} v + \alpha_4^{\prime} uv)^3$, where $\alpha_2^{\prime}=3^{-1} \alpha_2 \alpha_1^{\prime^{-2}},  \alpha_3^{\prime}=3^{-1} \alpha_1^{\prime^{-2}}\alpha_3$ and $\alpha_4^{\prime}=3^{-1} \alpha_1^{\prime^{-2}}(\alpha_4-6\alpha_1^{\prime}\alpha_2^{\prime}\alpha_3^{\prime} )$,  a contradiction. 
        
        Conversely, 
        assume that $\alpha$ is cube in $\mathcal{R}$. Then there exists $\alpha_1^{\prime} +\alpha_2^{\prime}u+ \alpha_3^{\prime} v + \alpha_4^{\prime} uv$, where $\alpha_1^{\prime}, \alpha_2^{\prime}, \alpha_3^{\prime}, \alpha_4^{\prime} \in \mathbb{F}_{p^m}$, such that 
        \begin{align*}
            \alpha=&(\alpha_1^{\prime} +\alpha_2^{\prime}u+ \alpha_3^{\prime} v + \alpha_4^{\prime} uv)^3\\
            =&\alpha_1^{\prime^{3}}+(3\alpha_1^{\prime^{2}}\alpha_2^{\prime})u+(3\alpha_1^{\prime^{2}}\alpha_3^{\prime})v+(3\alpha_1^{\prime^{2}}\alpha_4^{\prime}+6\alpha_1^{\prime}\alpha_2^{\prime}\alpha_3^{\prime})uv.
        \end{align*}
        Thus, $\alpha_1 + \alpha_2 u + \alpha_3 v + \alpha_4 uv=\alpha_1^{\prime^{3}}+(3\alpha_1^{\prime^{2}}\alpha_2^{\prime})u+(3\alpha_1^{\prime^{2}}\alpha_3^{\prime})v+(3\alpha_1^{\prime^{2}}\alpha_4^{\prime}+6\alpha_1^{\prime}\alpha_2^{\prime}\alpha_3^{\prime})uv$. By comparing coefficients, we have $\alpha_1= \alpha_1^{\prime^{3}} \in \mathbb{F}_{p^m}$.
    \end{proof}
\begin{remark}
   We have $\alpha_1={\alpha_0}^{p^{s}}$. By Lemma \ref{2}, $\alpha$ is non-cube in $\mathcal{R}$ if and only if $\alpha_0$ is non-cube in $\mathbb{F}_{p^m}$.
\end{remark}
    \begin{proposition}
        Any non zero  polynomial $c_1x^2+c_2x+c_3 \in \mathbb{F}_{p^m}[x]$ is a unit in $R_{\alpha_1, \alpha_2,  \alpha_3, \alpha_4}$.
    \end{proposition}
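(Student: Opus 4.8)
The plan is to follow the strategy used for the analogous statement in Section~4, splitting according to $\deg g$ where $g(x)=c_1x^2+c_2x+c_3$ and $(c_1,c_2,c_3)\neq(0,0,0)$; the only genuinely new feature is the extra summand $\alpha_2u$ in $\alpha$, which I will argue is harmless. The case $\deg g=0$ is immediate: then $g=c_3\in\mathbb{F}^*_{p^m}$ is a unit of $\mathcal{R}\subseteq R_{\alpha_1,\alpha_2,\alpha_3,\alpha_4}$. For $\deg g=1$ it suffices, after dividing out a nonzero scalar, to invert $x-c$ for $c\in\mathbb{F}_{p^m}$: in $R_{\alpha_1,\alpha_2,\alpha_3,\alpha_4}$ one has
\[
(x-c)^{p^s}(x^2+cx+c^2)^{p^s}=x^{3p^s}-c^{3p^s}=\alpha-c^{3p^s}=(\alpha_1-c^{3p^s})+\alpha_2u+\alpha_3v+\alpha_4uv,
\]
and since $\alpha_1$ is non-cube (Proposition~\ref{2}) while $c^{3p^s}=(c^{p^s})^3$ is a cube, the constant $\alpha_1-c^{3p^s}$ is nonzero; hence the right-hand side is a unit of $\mathcal{R}$, so $x-c$, being a factor of a unit, is invertible, with explicit inverse $(x-c)^{p^s-1}(x^2+cx+c^2)^{p^s}\bigl((\alpha_1-c^{3p^s})+\alpha_2u+\alpha_3v+\alpha_4uv\bigr)^{-1}$. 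Consequently $c_2x+c_3=c_2\bigl(x-(-c_2^{-1}c_3)\bigr)$ is a unit whenever $c_2\neq0$.

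For $\deg g=2$ (so $c_1\neq0$) I would reduce to the monic polynomial $x^2+bx+d$ with $b=c_1^{-1}c_2$, $d=c_1^{-1}c_3$, and use $(x^2+bx+d)(x-b)=x^3+(d-b^2)x-bd$. Raising to the $p^s$-th power and then using $x^{3p^s}=\alpha$, $\alpha_1=\alpha_0^{p^s}$, and additivity of the Frobenius $y\mapsto y^{p^s}$, one obtains in $R_{\alpha_1,\alpha_2,\alpha_3,\alpha_4}$
\[
(x^2+bx+d)^{p^s}(x-b)^{p^s}=\bigl((\alpha_0-bd)+(d-b^2)x\bigr)^{p^s}+\alpha_2u+\alpha_3v+\alpha_4uv=w(x)^{p^s}+n,
\]
where $w(x)=(\alpha_0-bd)+(d-b^2)x\in\mathbb{F}_{p^m}[x]$ and $n=\alpha_2u+\alpha_3v+\alpha_4uv$. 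It therefore suffices to show $w(x)^{p^s}+n$ is a unit, since then $g(x)^{p^s}$ divides a unit and hence $g(x)$ is a unit.

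The key observation---and the one place where the new term $\alpha_2u$ is handled---is that $n=\alpha_2u+\alpha_3v+\alpha_4uv$ is nilpotent in $\mathcal{R}$ (one checks $n^2\in\langle uv\rangle$ and $n^3=0$), so adding $n$ to any unit of $R_{\alpha_1,\alpha_2,\alpha_3,\alpha_4}$ again yields a unit; thus it is enough to prove $w(x)$, equivalently $w(x)^{p^s}$, is a unit. Since $\deg w\leq1$, if $w$ is a nonzero constant it is a unit, and if $\deg w=1$ it is a unit by the linear case above. Finally $w(x)=0$ is impossible, for it would force $\alpha_0-bd=0$ and $d-b^2=0$, hence $\alpha_0=bd=b^3=(c_1^{-1}c_2)^3$, contradicting that $\alpha_0$ is non-cube (which holds because $\alpha_1$ is non-cube, by the Remark following Proposition~\ref{2}). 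The main---essentially the only---obstacle is this degree-$2$ reduction: pushing the degree-$\leq1$ polynomial $w(x)$ through the nilpotent correction $n$ and converting ``$w(x)\neq0$'' into the non-cube contradiction; everything else is routine bookkeeping with the relation $x^{3p^s}=\alpha$ and the Frobenius.
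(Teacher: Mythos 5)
Your proposal is correct and follows essentially the same route as the paper: the same three-way split on $\deg g$, the same use of $(x-c)(x^2+cx+c^2)=x^3-c^3$ and the Frobenius to reduce invertibility of $g$ to invertibility of the degree-$\le 1$ polynomial $(\alpha_0-bd)+(d-b^2)x$, and the same non-cube contradiction when that polynomial vanishes. Your explicit remark that $\alpha_2u+\alpha_3v+\alpha_4uv$ is nilpotent (so it can be absorbed into a unit) only makes precise a step the paper leaves implicit.
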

    \begin{proof}
       Consider a  non zero polynomial $g(x)=c_1x^2+c_2x+c_3$ in $\mathbb{F}_{p^m}[x]$ i.e., $c_1,c_2,c_3 \in \mathbb{F}_{p^m}$ are not all zeros. 
        \begin{itemize}
            \item If $deg(g(x))=0$. i.e., $c_1=c_2=0, c_3\neq 0$. Thus, $g(x)=c_3$ is a unit.
            \item If $deg(g(x))=1$. i.e., $c_1=0, c_2\neq 0$ and $g(x)=c_2x+c_3$
            In $R_{\alpha_1, \alpha_2,  \alpha_3, \alpha_4}$ we have 
            \begin{align*}
                (x-c_3)^{p^{s}}(x^2+c_3x+c_3^2)^{p^{s}}&=(x^3-c_3^2)^{p^{s}}\\
                &=x^{3p^{s}}-c_3^{2p^{s}}\\
                 &=\alpha_1 +\alpha_2 u+ \alpha_3 v + \alpha_4 uv -c_3^{3p^{s}}\\
                 &=(\alpha_1-c_3^{3p^{s}})+(\alpha_2 u+ \alpha_3v+\alpha_4 uv).
            \end{align*} 
        Not that $\alpha_1-c_2^{3p^{s}}$ is unit in $\mathbb{F}_{p^m}$ as $\alpha_1$ is non-cube in $\mathbb{F}_{p^m}$. Hence, $(\alpha_1-c_2^{3p^{s}})+(\alpha_2 u+ \alpha_3v+\alpha_4 uv)$ is a unit in $\mathcal{R}$. Therefore, 
        \begin{center}
            $(x-c_3)^{-1}=(x-c_3)^{p^{s-1}}(x^2+c_3x+c_3^2)^{p^{s}}(\alpha_1-c_3^{3p^{s}}+\alpha_2 u+ \alpha_3v+\alpha_4 uv)^{-1}$.
        \end{center}
        For any $c_2\neq 0$ in $\mathbb{F}_{p^m}$,  we have 
        \begin{align*}
            (c_2x+c_3)^{-1}&=c_2^{-1}(x-c_2^{-1}(-c_3))^{-1}\\
            &=c_2^{-1}(x-c_2^{-1}(-c_3))^{p^{s-1}}(x^2+c_2^{-1}(-c_3)x+(c_2^{-1}(-c_3))^2)^{p^{s}}(\alpha_1-c_1^{-3p^{s}}c_2^{3p^{s}}+\alpha_2 u+ \alpha_3v+\alpha_4 uv)^{-1}.
        \end{align*}

        \item If $deg(g(x))=2$. i.e., $c_1\neq 0$ and $g(x)=c_1x^2+c_2x+c_3$. In $R_{\alpha_1, \alpha_2,  \alpha_3, \alpha_4}$ we have 
    \begin{align*}
       g^{-1}(x)=&(c_1x^2+c_2x+c_3)^{-1}\\
        =&c^{-1}_1(x^2+c^{-1}_1c_2x+c^{-1}_1c_3)^{-1}\\
        =&c^{-1}_1(x^2+c^{-1}_1c_2x+c^{-1}_1c_3)^{p^s-1}(x^2+c^{-1}_1c_2x+c^{-1}_1c_3)^{-p^s}(x-c^{-1}_1c_2)^{-p^s}(x-c^{-1}_1c_2)^{p^s}\\
        =&c^{-1}_1(x^2+c^{-1}_1c_2x+c^{-1}_1c_3)^{p^s-1}(x-c^{-1}_1c_2)^{p^s}\big[(x^2+c^{-1}_1c_2x+c^{-1}_1c_3)(x-c^{-1}_1c_2)\big]^{-p^s}\\
        =&c^{-1}_1(x^2+c^{-1}_1c_2x+c^{-1}_1c_3)^{p^s-1}(x-c^{-1}_1c_2)^{p^s}\big[x^3+(c^{-1}_1c_3-c^{-2}_1c^{2}_2)x-c^{-2}_1c_2c_3\big]^{-p^s}\\
        =&c^{-1}_1(x^2+c^{-1}_1c_2x+c^{-1}_1c_3)^{p^s-1}(x-c^{-1}_1c_2)^{p^s}\big[x^{3p^s}+(c^{-1}_1c_3-c^{-2}_1c^{2}_2)^{p^s}x^{p^s}-(c^{-2}_1c_2c_3)^{p^s}\big]^{-1}\\
        =&c^{-1}_1(x^2+c^{-1}_1c_2x+c^{-1}_1c_3)^{p^s-1}(x-c^{-1}_1c_2)^{p^s}\\
        &\times \big[ \alpha_1 + \alpha_2 u + \alpha_3 v + \alpha_4 uv+(c^{-1}_1c_3-c^{-2}_1c^{2}_2)^{p^s}x^{p^s}-(c^{-2}_1c_2c_3)^{p^s}\big]^{-1}\\
        =&c^{-1}_1(x^2+c^{-1}_1c_2x+c^{-1}_1c_3)^{p^s-1}(x-c^{-1}_1c_2)^{p^s}\\
        &\times \big[ (\alpha_0-c^{-2}_1c_2c_3 +(c^{-1}_1c_3-c^{-2}_1c^{2}_2)x)^{p^s} +\alpha_2 u + \alpha_3 v + \alpha_4 uv\big]^{-1}
    \end{align*}
So, $g(x)$ is invertible if and only if $\alpha_0-c^{-2}_1c_2c_3 +(c^{-1}_1c_3-c^{-2}_1c^{2}_2)x$ is invertible. Suppose $\alpha_0-c^{-2}_1c_2c_3 +(c^{-1}_1c_3-c^{-2}_1c^{2}_2)x=0$. Then $\alpha_0-c^{-2}_1c_2c_3=0$ and  $c^{-1}_1c_3-c^{-2}_1c^{2}_2=0.$ This implies that $\alpha_0=(c_1^{-1}c_2)^{3}$, a contradiction as $\alpha_0$ is non-cube.
 \end{itemize}
 Therefore, any non zero polynomial $c_1x^2+c_2x+c_3 \in \mathbb{F}_{p^m}[x]$ is a unit in $R_{\alpha_1, \alpha_2,  \alpha_3, \alpha_4}$.
       
    \end{proof}

    \begin{lemma}\label{2.3}
        In $\mathcal{R}$,  $\langle v \rangle \subseteq \langle (x^3-\alpha_0)^{p^{s}}, u \rangle$. Also $x^3-\alpha_0$ is a nilpotent element with nilpotency index $2p^s$ if the characteristic of the ring $R_{\alpha_1, \alpha_2,  \alpha_3, \alpha_4}$ is even and nilpotency index of $x^3-\alpha_0$ is $3p^s$ if the characteristic of the ring $R_{\alpha_1, \alpha_2,  \alpha_3, \alpha_4}$ is odd.
    \end{lemma}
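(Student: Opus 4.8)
The plan is to mimic the computation in Lemma~\ref{8.1}: compute the successive $p^s$-th powers of $x^3-\alpha_0$ inside $R_{\alpha_1,\alpha_2,\alpha_3,\alpha_4}$ by means of the Frobenius identity, and then read both assertions off the answer.

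First I would note that since $p$ is the characteristic and $p \mid \binom{p^s}{i}$ for every $1 \le i \le p^s-1$, the ``freshman's dream'' gives
\[
(x^3-\alpha_0)^{p^s} \;=\; x^{3p^s}-\alpha_0^{p^s} \;=\; x^{3p^s}-\alpha_1 \;=\; (\alpha_1+\alpha_2 u+\alpha_3 v+\alpha_4 uv)-\alpha_1 \;=\; \alpha_2 u+\alpha_3 v+\alpha_4 uv,
\]
using $x^{3p^s}=\alpha$ in $R_{\alpha_1,\alpha_2,\alpha_3,\alpha_4}$ and $\alpha_0^{p^s}=\alpha_1$. Rewriting $\alpha_2 u+\alpha_3 v+\alpha_4 uv=\alpha_2 u+v(\alpha_3+\alpha_4 u)$ and using that $\alpha_3\ne 0$ makes $\alpha_3+\alpha_4 u$ a unit, I solve $v=(\alpha_3+\alpha_4 u)^{-1}\bigl[(x^3-\alpha_0)^{p^s}-\alpha_2 u\bigr]$, so $v\in\langle (x^3-\alpha_0)^{p^s},u\rangle$ and hence $\langle v\rangle\subseteq\langle (x^3-\alpha_0)^{p^s},u\rangle$. (Note that one cannot hope for equality here, unlike in Lemma~\ref{8.1}, because of the $\alpha_2 u$ term.)

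Next, writing $w:=(x^3-\alpha_0)^{p^s}=\alpha_2 u+\alpha_3 v+\alpha_4 uv$, the relations $u^2=v^2=0$ (whence also $u^2v=uv^2=u^2v^2=0$) give, after a direct expansion, $w^2=2\alpha_2\alpha_3\,uv$ and then $w^3=w\cdot w^2=2\alpha_2\alpha_3(\alpha_2 u^2v+\alpha_3 uv^2+\alpha_4 u^2v^2)=0$. So $(x^3-\alpha_0)^{2p^s}=2\alpha_2\alpha_3\,uv$ and $(x^3-\alpha_0)^{3p^s}=0$ in all cases. If the characteristic is $2$ then $2\alpha_2\alpha_3\,uv=0$, so $(x^3-\alpha_0)^{2p^s}=0$; if it is odd then $2\alpha_2\alpha_3\ne 0$, so $(x^3-\alpha_0)^{2p^s}=2\alpha_2\alpha_3\,uv\ne 0$ while $(x^3-\alpha_0)^{3p^s}=0$.

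It remains to check that the power just below the claimed index is nonzero, and this is the only point needing a little care. I would use that $R_{\alpha_1,\alpha_2,\alpha_3,\alpha_4}$ is free over $\mathbb{F}_{p^m}$ on the $12p^s$ elements $\{x^i,ux^i,vx^i,uvx^i:0\le i\le 3p^s-1\}$. For $0\le j<p^s$ the element $(x^3-\alpha_0)^j$ is a monic polynomial in $x$ of degree $3j<3p^s$, hence nonzero; therefore $(x^3-\alpha_0)^{p^s+j}=w\,(x^3-\alpha_0)^j$ has $u$-component $\alpha_2(x^3-\alpha_0)^j\ne 0$, and in the odd-characteristic case $(x^3-\alpha_0)^{2p^s+j}=2\alpha_2\alpha_3\,uv\,(x^3-\alpha_0)^j$ has $uv$-component $2\alpha_2\alpha_3(x^3-\alpha_0)^j\ne 0$. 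Taking $j=p^s-1$ yields $(x^3-\alpha_0)^{2p^s-1}\ne 0$ when the characteristic is $2$ and $(x^3-\alpha_0)^{3p^s-1}\ne 0$ when it is odd, which pins the nilpotency index to $2p^s$ and $3p^s$ respectively. The main obstacle is precisely this non-vanishing bookkeeping; everything else is a routine use of the Frobenius and the relations $u^2=v^2=0$.
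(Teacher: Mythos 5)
Your proposal is correct and follows essentially the same route as the paper: apply the Frobenius identity to get $(x^3-\alpha_0)^{p^s}=\alpha_2 u+\alpha_3 v+\alpha_4 uv$, solve for $v$ using that $\alpha_3+\alpha_4 u$ is a unit, and then square and cube to obtain $2\alpha_2\alpha_3\,uv$ and $0$. Your additional verification that $(x^3-\alpha_0)^{2p^s-1}$ (resp.\ $(x^3-\alpha_0)^{3p^s-1}$) is nonzero, via the free $\mathbb{F}_{p^m}$-basis of $R_{\alpha_1,\alpha_2,\alpha_3,\alpha_4}$, is a welcome completeness check that the paper's proof omits.
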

    \begin{proof}
        Let $x^3-\alpha_0 \in \mathcal{R}$. As $p$ is the characteristic and $p\vert\binom{p^s}{i}$,  for every $i$, $1\leq i \leq p^s-1$,  we have 
        \begin{align*}
            (x^3-\alpha_0)^{p^{s}}&=x^{3p^{s}}-\alpha_0^{p^{s}}+\sum_{i=1}^{p^s-1}(-1)^{p^{s}-i}\binom{p^s}{i}(x^{3})^{i}(\alpha_0)^{p^{s}-i}\\
            &=x^{3p^{s}}-\alpha_0^{p^{s}}\\
            &=x^{3p^{s}}-\alpha_1\\
            &=\alpha_2 u+\alpha_3 v+\alpha_4 uv\\
			(x^3-\alpha_0)^{p^{s}}-\alpha_2 u&=v(\alpha_3+\alpha_4 u).
        \end{align*}
		Since $\alpha_3 \neq 0$,  $(\alpha_3+\alpha_4 u)$ is a unit element of $R_{\alpha_1, \alpha_2, \alpha_3,  \alpha_4}$.
		Thus, $\langle v \rangle \subseteq \langle (x^3-\alpha_0)^{p^{s}}, u \rangle$. Now we have $(x^3-\alpha_0)^{2p^{s}}=2\alpha_2 \alpha_3 uv$. If the characteristic is even nilpotency index of $(x^3-\alpha_0)$ is $2p^s$. If the characteristic is odd, we have $(x^3-\alpha_0)^{3p^{s}}=2\alpha_2 \alpha_3 uv(\alpha_2 u+\alpha_3 v+\alpha_4 uv)=0$. Hence,  the nilpotency index of $(x^3-\alpha_0)$ is $3p^s$.
	\end{proof}
	\begin{lemma}\label{4.4}
		Let $x^3-\alpha_0$ be an element of $R_{\alpha_1, \alpha_2, \alpha_3,  \alpha_4}$. Then $(x^3-\alpha_0)^{2p^s}=2\alpha_2 u (x^3-\alpha_0)^{p^s}$,  equivalently,   $\langle (x^3-\alpha_0)^{2p^s}\rangle =\langle u(x^3-\alpha_0)^{p^s} \rangle $.
	\end{lemma}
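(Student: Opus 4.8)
The plan is to derive the identity directly from the description of $(x^3-\alpha_0)^{p^s}$ obtained in Lemma~\ref{2.3} and then read off the ideal statement. Recall that in $R_{\alpha_1,\alpha_2,\alpha_3,\alpha_4}$ we showed $(x^3-\alpha_0)^{p^s}=\alpha_2 u+\alpha_3 v+\alpha_4 uv$. First I would square this expression and use the defining relations $u^2=0$, $v^2=0$, $uv=vu$ of $\mathcal{R}$: every monomial containing $u^2$ or $v^2$ (hence also $(uv)^2=u^2v^2$, $u\cdot uv=u^2v$ and $v\cdot uv=uv^2$) vanishes, leaving only the cross term $2\alpha_2\alpha_3\,uv$. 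Thus $(x^3-\alpha_0)^{2p^s}=2\alpha_2\alpha_3\,uv$.

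On the other hand, I would compute $u(x^3-\alpha_0)^{p^s}=u(\alpha_2 u+\alpha_3 v+\alpha_4 uv)=\alpha_3\,uv$, again because $u^2=0$. Multiplying by $2\alpha_2$ gives $2\alpha_2\,u(x^3-\alpha_0)^{p^s}=2\alpha_2\alpha_3\,uv$, which coincides with the expression for $(x^3-\alpha_0)^{2p^s}$ found above. This establishes the asserted equality $(x^3-\alpha_0)^{2p^s}=2\alpha_2\,u(x^3-\alpha_0)^{p^s}$.

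For the ideal reformulation, the inclusion $\langle (x^3-\alpha_0)^{2p^s}\rangle\subseteq\langle u(x^3-\alpha_0)^{p^s}\rangle$ is immediate from the identity just proved. For the reverse inclusion, note that $\alpha_2\in\mathbb{F}_{p^m}^{*}$ and $2$ is invertible in $\mathbb{F}_{p^m}$ (here $p$ is odd; in characteristic $2$ both sides vanish and there is nothing to prove), so $2\alpha_2$ is a unit of $R_{\alpha_1,\alpha_2,\alpha_3,\alpha_4}$. Multiplying the identity by $(2\alpha_2)^{-1}$ then expresses $u(x^3-\alpha_0)^{p^s}$ as an $R_{\alpha_1,\alpha_2,\alpha_3,\alpha_4}$-multiple of $(x^3-\alpha_0)^{2p^s}$, giving $\langle u(x^3-\alpha_0)^{p^s}\rangle\subseteq\langle (x^3-\alpha_0)^{2p^s}\rangle$. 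Hence the two ideals coincide.

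There is no genuine obstacle here: the whole argument is a short computation inside $R_{\alpha_1,\alpha_2,\alpha_3,\alpha_4}$ once Lemma~\ref{2.3} is available. The only point needing care is the bookkeeping when expanding $(\alpha_2 u+\alpha_3 v+\alpha_4 uv)^2$, i.e.\ correctly setting to zero all nilpotent monomials, and tracking the coefficient $2$, which is precisely what makes the characteristic-odd hypothesis relevant to the ideal equality.
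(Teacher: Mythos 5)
Your proof is correct and follows essentially the same route as the paper: both arguments rest on the Lemma~\ref{2.3} computations $(x^3-\alpha_0)^{p^s}=\alpha_2u+\alpha_3v+\alpha_4uv$ and $(x^3-\alpha_0)^{2p^s}=2\alpha_2\alpha_3uv$; you evaluate $u(x^3-\alpha_0)^{p^s}=\alpha_3uv$ directly where the paper instead substitutes $v=(\alpha_3+\alpha_4u)^{-1}[(x^3-\alpha_0)^{p^s}-\alpha_2u]$, and you additionally justify the ideal equality (via invertibility of $2\alpha_2$), which the paper merely asserts. One small correction to your parenthetical: in characteristic $2$ it is \emph{not} the case that both sides vanish --- $u(x^3-\alpha_0)^{p^s}=\alpha_3uv\neq0$ while $(x^3-\alpha_0)^{2p^s}=0$, so the ideal equality genuinely requires $p$ odd, which is the standing assumption in this part of the section.
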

	\begin{proof}
		From Lemma \ref{2.3}, we have $v=(\alpha_3+\alpha_4 u)^{-1}[(x^3-\alpha_0)^{p^{s}}-\alpha_2 u]$ and $(x^3-\alpha_0)^{2p^{s}}=2\alpha_2 \alpha_3 uv$. This gives
		\begin{align*}
			(x^3-\alpha_0)^{2p^{s}}&=2\alpha_2 \alpha_3 uv\\
			&=2\alpha_2 \alpha_3 u(\alpha_3+\alpha_4 u)^{-1}[(x^3-\alpha_0)^{p^{s}}-\alpha_2 u]\\
			&=2\alpha_2 \alpha_3 u(\alpha_3+\alpha_4 u)^{-1}(x^3-\alpha_0)^{p^{s}}\\
			&=2\alpha_2 u (x^3-\alpha_0)^{p^{s}}.
		\end{align*}
	\end{proof}
	\begin{lemma}
		The ring $R_{\alpha_1, \alpha_2,  \alpha_3, \alpha_4}$ is a local ring with the unique maximal ideal  $\langle (x^3-\alpha_0),  u \rangle $, but $R_{\alpha_1, \alpha_2,  \alpha_3, \alpha_4}$ is not a chain ring.
	\end{lemma}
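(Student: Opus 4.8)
The argument runs parallel to the corresponding lemma for $R_{\alpha_1,\alpha_3,\alpha_4}$ in Section~4; the one genuinely new feature is that the nilpotency index of $x^3-\alpha_0$ now depends on the parity of $p$ (Lemma~\ref{2.3}) and that $v,uv$ are no longer pure powers of $x^3-\alpha_0$. The plan is to first set up a canonical form for the elements of $R_{\alpha_1,\alpha_2,\alpha_3,\alpha_4}$. Starting from the identity of Lemma~\ref{2.3}, namely $v=(\alpha_3+\alpha_4u)^{-1}[(x^3-\alpha_0)^{p^s}-\alpha_2u]$, together with its consequence $uv=\alpha_3^{-1}u(x^3-\alpha_0)^{p^s}$ (using $u^2=0$), one eliminates $v$ and $uv$ from any expression $h_1(x)+uh_2(x)+vh_3(x)+uvh_4(x)$, expands the $h_i$ in powers of $x^3-\alpha_0$ with coefficients of the form $ax^2+bx+c$, and, invoking Lemma~\ref{4.4} when $p$ is odd to fold the higher $u$-powers back into the $u$-free part, arrives at the shape
\[
h(x)=\sum_{\ell}(a_{0\ell}x^2+b_{0\ell}x+c_{0\ell})(x^3-\alpha_0)^{\ell}+u\sum_{\ell}(a_{1\ell}x^2+b_{1\ell}x+c_{1\ell})(x^3-\alpha_0)^{\ell},\qquad a_{j\ell},b_{j\ell},c_{j\ell}\in\mathbb{F}_{p^m}.
\]
The admissible ranges of $\ell$ differ in the two parity cases, and must be chosen so that the number of free $\mathbb{F}_{p^m}$-parameters equals $\dim_{\mathbb{F}_{p^m}}R_{\alpha_1,\alpha_2,\alpha_3,\alpha_4}=12p^s$; this same dimension count gives uniqueness of the representation.

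Granted the canonical form, I would read off the non-units. Since $u$ and $x^3-\alpha_0$ are nilpotent (Lemma~\ref{2.3}) and the ring is commutative, $\langle x^3-\alpha_0,u\rangle$ lies in the nilradical, so every summand of $h(x)$ except $a_{00}x^2+b_{00}x+c_{00}$ is nilpotent. If $a_{00}x^2+b_{00}x+c_{00}\neq 0$, it is a unit by the proposition that every nonzero $c_1x^2+c_2x+c_3\in\mathbb{F}_{p^m}[x]$ is a unit in $R_{\alpha_1,\alpha_2,\alpha_3,\alpha_4}$, hence $h(x)=(\text{unit})+(\text{nilpotent})$ is a unit; if $a_{00}=b_{00}=c_{00}=0$, then $h(x)\in\langle x^3-\alpha_0,u\rangle$, which is proper because $1$ has nonzero quadratic part in its canonical form. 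Thus $\langle x^3-\alpha_0,u\rangle$ is exactly the set of non-units, so $R_{\alpha_1,\alpha_2,\alpha_3,\alpha_4}$ is local with this maximal ideal.

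Finally, to see the ring is not a chain ring it suffices to exhibit two incomparable ideals; I claim $\langle x^3-\alpha_0\rangle$ and $\langle u\rangle$ work, which then shows the maximal ideal is not principal and, by Proposition~\ref{prop2.1}, that $R_{\alpha_1,\alpha_2,\alpha_3,\alpha_4}$ is not a chain ring. For $x^3-\alpha_0\notin\langle u\rangle$: every element of $\langle u\rangle$ is annihilated by $u$ (as $u^2=0$), whereas $u(x^3-\alpha_0)\neq 0$, since otherwise $u(x^3-\alpha_0)^{p^s}=0$, contradicting $u(x^3-\alpha_0)^{p^s}=u(\alpha_2u+\alpha_3v+\alpha_4uv)=\alpha_3uv\neq 0$ (here $uv\neq 0$ because $\mathcal{R}$ embeds in $R_{\alpha_1,\alpha_2,\alpha_3,\alpha_4}$). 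For $u\notin\langle x^3-\alpha_0\rangle$: multiplying any element by $x^3-\alpha_0$ raises every exponent in its canonical form by one, and the reductions needed to return to canonical shape (killing powers at the nilpotency index, or invoking Lemma~\ref{4.4}, which trades $(x^3-\alpha_0)^{2p^s}$ for a multiple of $u(x^3-\alpha_0)^{p^s}$ with $p^s\geq 1$) never create a constant $u$-term, so every element of $\langle x^3-\alpha_0\rangle$ has zero $u$-coefficient on $(x^3-\alpha_0)^0$ in its canonical form, while $u$ itself has that coefficient equal to $1$. I expect the main obstacle to be the first step: pinning down the canonical form and its uniqueness precisely enough in both parity cases, since the interplay of the $v,uv$ substitutions with Lemma~\ref{4.4} makes the exponent ranges parity-dependent; the unit characterization and the incomparability argument are then routine.
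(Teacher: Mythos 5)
Your proposal follows essentially the same route as the paper: eliminate $v$ via $v=(\alpha_3+\alpha_4u)^{-1}[(x^3-\alpha_0)^{p^s}-\alpha_2u]$ to obtain the canonical form $h(x)=\sum_{\ell=0}^{2p^s-1}(a_{0\ell}x^2+b_{0\ell}x+c_{0\ell})(x^3-\alpha_0)^{\ell}+u\sum_{\ell=0}^{2p^s-1}(\cdots)(x^3-\alpha_0)^{\ell}$, conclude that $h$ is a non-unit exactly when $a_{00}=b_{00}=c_{00}=0$, and then show the maximal ideal is not principal so Proposition~\ref{prop2.1} applies. Your only additions are harmless: the parity-dependence you worry about does not actually affect the exponent ranges (both sums run over $0\le\ell\le 2p^s-1$ in either case, since the substitution never produces exponents $\ge 2p^s$), and your explicit verification that $x^3-\alpha_0\notin\langle u\rangle$ and $u\notin\langle x^3-\alpha_0\rangle$ fills in a step the paper merely asserts as "easy to verify."
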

	\begin{proof}
		Let $h(x)$ be a polynomial in $R_{\alpha_1, \alpha_2, \alpha_3, \alpha_4}$  then it can be expressed as  $h(x)=h_1(x)+uh_2(x)+vh_3(x)+uvh_4(x)$,  where $h_i(x)$, $1\leq i\leq 4$,
		are polynomials of degree less than or equal to $3p^s-1$ over $\mathbb{F}_{p^m}$. Thus, $h(x)$ can be uniquely expressed as
		\begin{align*}
			h(x)=&\sum_{\ell=0}^{p^s-1}(a_{0\ell}x^{2}+b_{0\ell}x+c_{0\ell})(x^3-\alpha_0)^\ell+u\sum_{\ell=0}^{p^s-1}(a_{1\ell}x^{2}+b_{1\ell}x+c_{1\ell})(x^3-\alpha_0)^\ell\\
			&+v\sum_{\ell=0}^{p^s-1}(a_{2\ell}x^{2}+b_{2\ell}x+c_{2\ell})(x^3-\alpha_0)^\ell+uv\sum_{\ell=0}^{p^s-1}(a_{3\ell}x^{2}+b_{3\ell}x+c_{3\ell})(x^3-\alpha_0)^\ell\\
			=&(a_{00}x^2+b_{00}x+c_{00})+\sum_{\ell=1}^{p^s-1}(a_{0\ell}x^{2}+b_{0\ell}x+c_{0\ell})(x^3-\alpha_0)^\ell+u\sum_{\ell=0}^{p^s-1}(a_{1\ell}x^2+b_{1\ell}x+c_{1\ell})(x^3-\alpha_0)^\ell\\
			&+v\begin{pmatrix}
				\sum\limits_{\ell=0}^{p^s-1}(a_{2\ell}x^{2}+b_{2\ell}x+c_{2\ell})(x^3-\alpha_0)^\ell+u\sum\limits_{\ell=0}^{p^s-1}(a_{3\ell}x^{2}+b_{3\ell}x+c_{3\ell})(x^3-\alpha_0)^\ell
			\end{pmatrix},
		\end{align*} 
		where $a_{j\ell},  b_{j\ell}, c_{j\ell} \in \mathbb{F}_{p^m}$,  $j=0, 1, 2, 3$. By Lemma \ref{2.3},  we have $v=(\alpha_3+\alpha_4 u)^{-1}((x^3-\alpha_0)^{p^{s}}-\alpha_2 u )$. Thus, $h(x)$ can be written as
		\begin{align}\label{eqn 4.1}
			\notag h(x)=&(a_{00}x^2+b_{00}x+c_{00})+(x^3-\alpha_0)\sum_{\ell=1}^{p^s-1}(a_{0\ell}x^{2}+b_{0\ell}x+c_{0\ell})(x^3-\alpha_0)^{\ell-1}+u\sum_{\ell=0}^{p^s-1}(a_{1\ell}x^{2}+b_{1\ell}x+c_{1\ell})(x^3-\alpha_0)^\ell\\
			\notag	&+(\alpha_3+\alpha_4 u)^{-1}((x^3-\alpha_0)^{p^{s}}-\alpha_2 u) \Bigg(\sum\limits_{\ell=0}^{p^s-1}(a_{2\ell}x^{2}+b_{2\ell}x+c_{2\ell})(x^3-\alpha_0)^\ell+u\sum\limits_{\ell=0}^{p^s-1}(a_{3\ell}x^{2}+b_{3\ell}x+c_{3\ell})(x^3-\alpha_0)^\ell
			\Bigg)\\
			=&(a_{00}x^2+b_{00}x+c_{00})+(x^3-\alpha_0)\sum\limits_{\ell=1}^{2p^s-1}(a^{\prime}_{0\ell}x^2+b^{\prime}_{0\ell}x+c^{\prime}_{0\ell})(x^3-\alpha_0)^{\ell-1}+u\sum\limits_{\ell=0}^{2p^s-1}(a^{\prime}_{1\ell}x^2+b^{\prime}_{1\ell}x+c^{\prime}_{1\ell})(x^3-\alpha_0)^\ell.
		\end{align} 
		As both $u$ and $(x^3-\alpha_0)$ are nilpotent  elements in $R_{\alpha_1, \alpha_2,  \alpha_3, \alpha_4}$,  the polynomial $h(x)$ is non-unit if and only if $a_{00}=b_{00}=c_{00}=0$. Hence $\langle (x^3-\alpha_0),  u \rangle $ represents the set of all non-units of $R_{\alpha_1, \alpha_2,  \alpha_3, \alpha_4}$,  which shows that $R_{\alpha_1, \alpha_2,  \alpha_3, \alpha_4}$ is a local ring, and the unique maximal ideal is given by $\langle (x^3-\alpha_0),  u \rangle $. It is easy to verify that $x^3-\alpha_0 \notin \langle u \rangle $ and $u\notin \langle (x^3-\alpha_0) \rangle $. Therefore,  the maximal ideal $\langle (x^3-\alpha_0),  u \rangle $ is not principal. By Proposition \ref{prop2.1},  $R_{\alpha_1, \alpha_2,  \alpha_3, \alpha_4}$ is not a chain ring.
	\end{proof}
	
\begin{remark}
    The structure of $(\alpha_1 + \alpha_2 u + \alpha_3 v + \alpha_4 uv)$- constacyclic code is similar to the previous section when the characteristic of $R_{\alpha_1, \alpha_2,  \alpha_3, \alpha_4}$ is even. Now look into the case when the characteristic of $R_{\alpha_1, \alpha_2,  \alpha_3, \alpha_4}$ is odd.
\end{remark}

	\begin{theorem}\label{2.7}
		Let $\mathcal{C}=\langle (x^3-\alpha_0)^\ell+u(x^3-\alpha_0)^{t}z(x)\rangle$ and $\Im$ be the smallest integer such that $u (x^3-\alpha_0)^{\Im} \in \mathcal{C}$. Then
		\begin{center}
			$\Im=$
			$\begin{cases}
				min\{\ell, p^s\}  & \text{if}\quad z(x)=0;\\
				min\{\ell, p^s,  2p^s +t-\ell\}  &\text{if}\quad z(x)\neq 0 \quad \text{and} \quad \ell \neq p^s+t;\\
				p^s &\text{if} \quad z(x)\neq 0 \quad \text{and}\quad \ell= p^s+t.
			\end{cases}$
		\end{center}
	\end{theorem}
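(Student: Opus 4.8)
The plan is to mirror the proof of Theorem~\ref{8.2}, the only structural novelty being that in odd characteristic $(x^3-\alpha_0)^{p^s}=\alpha_2u+\alpha_3v+\alpha_4uv$ is nilpotent of index $3p^s$ and, by Lemma~\ref{4.4}, $(x^3-\alpha_0)^{2p^s}=2\alpha_2\,u(x^3-\alpha_0)^{p^s}$ with $2\alpha_2$ a unit; this relation (together with $\alpha_2\neq0$) is precisely what injects the extra ``$p^s$'' into the minimum. I would first settle the upper bounds. Multiplying the generator by $u$ gives $u(x^3-\alpha_0)^\ell\in\mathcal C$, so $\Im\le\ell$. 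To obtain $\Im\le p^s$: when $\ell\le p^s$, multiply the generator by $u(x^3-\alpha_0)^{p^s-\ell}$ and use $u^2=0$; when $\ell>p^s$, multiply it by $(x^3-\alpha_0)^{2p^s-\ell}$, replace $(x^3-\alpha_0)^{2p^s}$ by $2\alpha_2\,u(x^3-\alpha_0)^{p^s}$ via Lemma~\ref{4.4}, and peel off the cofactor, which is a unit by the earlier Proposition that every nonzero quadratic over $\mathbb F_{p^m}$ is a unit in $R_{\alpha_1,\alpha_2,\alpha_3,\alpha_4}$ (except in the degenerate situation discussed below). Finally, when $z(x)\neq0$ and $\ell\ge p^s+t$, multiplying the generator by $(x^3-\alpha_0)^{2p^s-\ell}z(x)^{-1}$ and again invoking Lemma~\ref{4.4} isolates $u(x^3-\alpha_0)^{2p^s+t-\ell}$ up to a unit, so $\Im\le 2p^s+t-\ell$.

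For the reverse inequality I would write $u(x^3-\alpha_0)^{j}=f(x)\big[(x^3-\alpha_0)^\ell+u(x^3-\alpha_0)^t z(x)\big]$, expand $f(x)=A(x)+uB(x)$ in the normal form~\eqref{eqn 4.1} (with $A,B$ sums of quadratics times powers of $(x^3-\alpha_0)$), and multiply out. Since the left-hand side lies in $\langle u\rangle$, the components of $A(x)(x^3-\alpha_0)^\ell$ in the ``$1$'' and ``$v$'' directions of the $\{1,u,v,uv\}$-decomposition — reducing powers $\ge p^s$ by $(x^3-\alpha_0)^{p^s}=\alpha_2u+\alpha_3v+\alpha_4uv$ from Lemma~\ref{2.3} — must vanish, which on comparing coefficients forces $A(x)=(x^3-\alpha_0)^{2p^s-\ell}\widetilde A(x)$. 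Substituting back and applying Lemma~\ref{4.4} gives $u(x^3-\alpha_0)^{j}=2\alpha_2\,u(x^3-\alpha_0)^{p^s}\widetilde A(x)+u(x^3-\alpha_0)^{2p^s+t-\ell}\widetilde A(x)z(x)+uB(x)(x^3-\alpha_0)^\ell$; since any cancellation among these three terms only raises the $(x^3-\alpha_0)$-order of what survives, pushing it into the $uv$-graded part which cannot match a lower-order pure-$u$ element, reading off orders yields $j\ge\min\{\ell,\,p^s,\,2p^s+t-\ell\}$ (and $j\ge\min\{\ell,p^s\}$ when $z=0$). Combined with the upper bounds, this proves the stated formula whenever $\ell\neq p^s+t$.

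The case $\ell=p^s+t$ with $z\neq0$ is the delicate one, and I expect it to be the main obstacle: there $2p^s+t-\ell=p^s$, the first two terms above acquire equal $(x^3-\alpha_0)$-order, and they combine to $u(x^3-\alpha_0)^{p^s}\widetilde A(x)\big(2\alpha_2+z(x)\big)$. If $2\alpha_2+z(x)$ is a unit, the preceding arguments apply unchanged and $\Im=p^s$. Otherwise $2\alpha_2+z(x)$ is a non-unit, which forces $z(x)=-2\alpha_2+(x^3-\alpha_0)w(x)$ for some $w$, and then I would rewrite the generator as $(x^3-\alpha_0)^t\big[\alpha_3v-\alpha_2u+\alpha_4uv+u(x^3-\alpha_0)w(x)\big]$ using $(x^3-\alpha_0)^{p^s}=\alpha_2u+\alpha_3v+\alpha_4uv$, and exploit that both the $v$-coefficient $\alpha_3$ and the $u$-coefficient $-\alpha_2$ are units: this suffices to exhibit $u(x^3-\alpha_0)^{p^s}\in\mathcal C$, while the same $\{1,u,v,uv\}$-component bookkeeping shows $u(x^3-\alpha_0)^{j}\notin\mathcal C$ for $j<p^s$, so once more $\Im=p^s$. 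Tracking exactly how $(x^3-\alpha_0)^{p^s}=\alpha_2u+\alpha_3v+\alpha_4uv$ mixes the $u$-graded pieces in this sub-case, and so ruling out the spurious value $p^s+t$, is the crux; everything else is bookkeeping parallel to Theorem~\ref{8.2}.
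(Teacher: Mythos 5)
Your handling of the generic branches is essentially the paper's own argument: the upper bounds come from $u\cdot g$ and from $(x^3-\alpha_0)^{2p^s-\ell}\cdot g$ combined with Lemma \ref{4.4}, and the lower bound $\Im\ge\min\{\ell,p^s,2p^s+t-\ell\}$ from expanding $f(x)g(x)$ in the normal form of Equation \ref{eqn 4.1}; that part is sound. You have also correctly isolated the one genuinely problematic configuration, namely $\ell=p^s+t$ with $2\alpha_2+z(x)$ a non-unit, which the paper's proof passes over in silence (there $(x^3-\alpha_0)^{2p^s-\ell}g=u(x^3-\alpha_0)^{p^s}\bigl(2\alpha_2+z(x)\bigr)$ yields no information).

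However, your resolution of that sub-case is where the argument breaks, and it cannot be repaired: the asserted value $\Im=p^s$ is false there once $t\ge1$. Take $z(x)=-2\alpha_2$ (so $w=0$) and any $1\le t\le p^s-1$, $\ell=p^s+t$. Then $g=(x^3-\alpha_0)^t\delta$ with $\delta=-\alpha_2u+\alpha_3v+\alpha_4uv$. Write $f=f_0+uf_1+vf_2+uvf_3$ with $f_i\in\mathbb{F}_{p^m}[x]$ of degree $<3p^s$. The $u$- and $v$-components of $fg$ are $-\alpha_2(x^3-\alpha_0)^tf_0$ and $\alpha_3(x^3-\alpha_0)^tf_0$ reduced modulo $x^{3p^s}-\alpha_1=(x^3-\alpha_0)^{p^s}$; they are proportional, so $fg=u(x^3-\alpha_0)^{j}$ forces both to vanish, hence $j\ge p^s$ and $f_0=(x^3-\alpha_0)^{p^s-t}h_0$. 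The overflow of these two components into the $uv$-slot then cancels exactly ($-\alpha_2\alpha_3h_0+\alpha_3\alpha_2h_0=0$, using $u(x^3-\alpha_0)^{p^s}=\alpha_3uv$ and $v(x^3-\alpha_0)^{p^s}=\alpha_2uv$), so the $uv$-component of $fg$ is $(x^3-\alpha_0)^{t}(\alpha_3f_1-\alpha_2f_2)\bmod(x^3-\alpha_0)^{p^s}$, which lies in $\langle(x^3-\alpha_0)^{t}\rangle$. Since $u(x^3-\alpha_0)^{j}=\alpha_3(x^3-\alpha_0)^{j-p^s}uv$, this forces $j\ge p^s+t$; hence $\Im=\ell=p^s+t>p^s$. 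In particular your proposed moves (multiplying the factored generator by $u$ or by $v$) only return $u(x^3-\alpha_0)^{p^s+t}$ up to units, i.e.\ the trivial bound $\Im\le\ell$, and no other multiplier does better. The correct conclusion is that the third branch of the theorem holds only when $2\alpha_2+z(x)$ is a unit (equivalently $z_{00}x^2+z_{10}x+z_{20}\ne-2\alpha_2$); in the degenerate sub-case it is the statement itself, not merely the proof, that needs to be amended.
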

	\begin{proof}
		We have $\Im \leq \ell $ because 
		\begin{equation*}
			u(x^3-\alpha_0)^\ell=u[(x^3-\alpha_0)^\ell+u(x^3-\alpha_0)^t z(x)] \in \mathcal{C}.
		\end{equation*}
		Suppose $z(x)=0$,  then $\mathcal{C}=\langle (x^3-\alpha_0)^\ell \rangle $,  implies $\Im =\ell$. Let us now examine the case $z(x) \neq 0$,  i.e.,  $z(x)$ is an invertible. As $u(x^3-\alpha_0)^ {\Im} \in \langle (x^3-\alpha_0)^\ell+u(x^3-\alpha_0)^t z(x) \rangle $, there exists a polynomial $f(x)=\sum\limits_{\kappa=0}^{2p^s-1}(a_{0\kappa}x^2+b_{0\kappa}x+c_{0\kappa})(x^3-\alpha_0)^{\kappa}+u\sum\limits_{\kappa=0}^{2p^s-1}(a_{1\kappa}x^2+b_{1\kappa}x+c_{1\kappa})(x^3-\alpha_0)^{\kappa} \in R_{\alpha_1 , \alpha_2,  \alpha_3,  \alpha_4} $, where $a_{0\kappa},  b_{0\kappa},  c_{0\kappa}  a_{1\kappa},  b_{1\kappa}, c_{1\kappa}  \in \mathbb{F}_{p^m}$ such that $u(x^3-\alpha_0)^{\Im} =f(x)[(x^3-\alpha_0)^\ell+u(x^3-\alpha_0)^t z(x)]$. Then 
		\begin{align*}
			u(x^3-\alpha_0)^{\Im} =&
			\begin{bmatrix}
				\sum\limits_{\kappa=0}^{2p^s-1}(a_{0\kappa}x^2+b_{0\kappa}x+c_{0\kappa})(x^3-\alpha_0)^{\kappa}+u\sum\limits_{\kappa=0}^{2p^s-1}(a_{1\kappa}x^2+b_{1\kappa}x+c_{1\kappa})(x^3-\alpha_0)^{\kappa}
			\end{bmatrix}\\
			& \times [(x^3-\alpha_0)^\ell +u(x^3-\alpha_0)^t z(x)] \\
			=&(x^3-\alpha_0)^\ell \sum\limits_{\kappa=0}^{2p^s-1}(a_{0\kappa}x^2+b_{0\kappa}x+c_{0\kappa})(x^3-\alpha_0)^{\kappa}\\
			&+u(x^3-\alpha_0)^\ell \sum\limits_{\kappa=0}^{2p^s-1}(a_{1\kappa}x^2+b_{1\kappa}x+c_{1\kappa})(x^3-\alpha_0)^{\kappa}\\
			&+u(x^3-\alpha_0)^t z(x)\sum\limits_{\kappa=0}^{2p^s-1}(a_{0\kappa}x^2+b_{0\kappa}x+c_{0\kappa})(x^3-\alpha_0)^{\kappa}\\
			=&2\alpha_2 u(x^3-\alpha_0)^{p^s}
			\sum\limits_{\kappa=0}^{\ell-1}(a_{0\kappa}x^2+b_{0\kappa}x+c_{0\kappa})(x^3-\alpha_0)^{\kappa}\\
			&+u(x^3-\alpha_0)^\ell \sum\limits_{\kappa=0}^{2p^s-\ell-1}(a_{1\kappa}x^2+b_{1\kappa}x+c_{1\kappa})(x^3-\alpha_0)^{\kappa}\\
			&+u(x^3-\alpha_0)^{2p^s+t-\ell} z(x)\sum\limits_{\kappa=0}^{\ell-1}(a_{0\kappa}x^2+b_{0\kappa}x+c_{0\kappa})(x^3-\alpha_0)^{\kappa}.
		\end{align*}
		Thus,  $\Im \geq \min\{\ell, p^s,  2p^s+t-\ell\}$. If $z(x)=0$,  then $\Im \geq \min\{\ell, p^s\}$. Moreover by Lemma \ref{4.4}
		\begin{equation*}
			[(x^3-\alpha_0)^\ell +u(x^3-\alpha_0)^t z(x)](x^3-\alpha_0)^{2p^s-\ell}=2\alpha_2 u(x^3-\alpha_0)^{p^s}+u(x^3-\alpha_0)^{2p^s+t-\ell}z(x) \in \mathcal{C}.
		\end{equation*}
		If $z(x)=0$,  then $\Im \leq p^s$.
		Suppose $z(x)\neq 0$ and $\ell\neq p^s+t$, then $\Im \leq \min\{ \ell, p^s, 2p^s+t-\ell\}$.
		Suppose $z(x) \neq 0$ and $\ell=p^s+t$, then $\Im= p^s$.
	\end{proof}
	The next step is determining the number of codewords in a constacyclic code.
	Let $\psi_u$ denote reduction modulo $u$ map from $R_{\alpha_1,  \alpha_2,  \alpha_3, \alpha_4}$ to $\frac{(\mathbb{F}_{p^m} + v\mathbb{F}_{p^m})[x]}{\langle x^{3p^s}-(\alpha_1 + \alpha_3 v) \rangle}$. The map $\psi_u$ is a surjective ring homomorphism. Let
	$\mathcal{C}$ be an ideal of $R_{\alpha_1,  \alpha_2,  \alpha_3, \alpha_4}$. We define the torsion of $\mathcal{C}$ by $Tor(\mathcal{C})=\{a(x) \in \frac{(\mathbb{F}_{p^m} + v\mathbb{F}_{p^m})[x]}{\langle x^{3p^s}-(\alpha_1 + \alpha_3 v) \rangle} : ua(x)\in \mathcal{C}\}$ and residue of $\mathcal{C}$ by  $Res(\mathcal{C})=\psi_u(\mathcal{C}).$ Clearly, $Tor(\mathcal{C})$ and $Res(\mathcal{C})$  are ideals of $\frac{(\mathbb{F}_{p^m} + v\mathbb{F}_{p^m})[x]}{\langle x^{3p^s}-(\alpha_1 + \alpha_3 v) \rangle}$ and $Tor(\mathcal{C}) \cong Ker(\psi_u\vert \mathcal{C} )$. Therefore, $\vert \mathcal{C}\vert =\vert Res(\mathcal{C})\vert \vert Tor(\mathcal{C})\vert  $. We calculate the number of codewords in $Tor(\mathcal{C})$ and $Res(\mathcal{C})$ to get the number of codewords in $\mathcal{C}$.  We know from \cite{chen2016constacyclic} that any ideal of $\frac{(\mathbb{F}_{p^m} + v\mathbb{F}_{p^m})[x]}{\langle x^{3p^s}-(\alpha_1 + \alpha_3 v) \rangle}$ is of the form  $\langle (x^3-\alpha_0)^\ell \rangle $, where
	$0 \leq \ell \leq 2p^s$ and it has $p^{3m(2p^s-\ell)}$ codewords.

	\begin{theorem}
		Let $\mathcal{C}$ be a $(\alpha_1 +\alpha_2 u+  \alpha_3 v + \alpha_4 uv)$-constacyclic code of length $3p^s$ over $\mathcal{R}$ and let $\eta_\mathcal{C}$ denote the number of codewords in $\mathcal{C}$. Then
		\begin{enumerate}
			\item Type A: Trivial ideals
			\begin{itemize}
				\item If $\mathcal{C}=\langle 0 \rangle$,  then $Res(\mathcal{C})=\langle 0 \rangle=Tor(\mathcal{C})$ and $\eta_\mathcal{C}=1$.
				\item If $\mathcal{C}=\langle 1 \rangle$,  then $Res(\mathcal{C})=\langle 1 \rangle=Tor(\mathcal{C})$ and $\eta_\mathcal{C}=p^{12mp^{s}}$.
			\end{itemize}
			\item Type B: Principal ideals with nonmonic generators
			\begin{itemize}
				\item If $\mathcal{C}=\langle u(x^3-\alpha_0)^\ell \rangle $,  where $0\leq  \ell \leq 2p^s-1$,  then  $Tor(\mathcal{C})=\langle (x^3-\alpha_0)^\ell \rangle $, $Res(\mathcal{C})=\langle 0 \rangle$ and $\eta_\mathcal{C}=p^{3m(2p^{s}-\ell)}$.
			\end{itemize}
			\item Type C: Principal ideals with monic generators
			\begin{itemize}
				\item If $\mathcal{C}=\langle (x^3-\alpha_0)^\ell \rangle $,  where $1\leq  \ell \leq 2p^s-1$,  then $Res(\mathcal{C})=\langle (x^3-\alpha_0)^\ell \rangle $ and $Tor(\mathcal{C})=\langle (x^3-\alpha_0)^{\Im} \rangle $ and 
				\begin{center}
					$\eta_\mathcal{C}=$
					$\begin{cases}
						p^{6m(2p^s-\ell)} & \text{if}\quad 1\leq \ell\leq p^s;\\
						p^{3m(3p^s-\ell)} & \text{if}\quad p^s < \ell \leq 2p^s-1.
					\end{cases}$
				\end{center}
				\item Let $\mathcal{C}=\langle (x^3-\alpha_0)^\ell +u(x^3-\alpha_0)^t z(x) \rangle $,  where $1\leq  \ell \leq 2p^s-1$,  $0\leq  t < \ell  $ and $z(x)$ is an invertible. If $\ell \neq p^s+t$ then  $Res(\mathcal{C})=\langle (x^3-\alpha_0)^\ell \rangle $ and $Tor(\mathcal{C})=\langle (x^3-\alpha_0)^{\Im} \rangle $, where $\Im$ is the smallest integer such that $u(x^3-\alpha_0)^{\Im} \in \mathcal{C}$,  which is given by $\Im = min\{ p^s, \ell, 2p^s+t-\ell\}$. Thus,
				\begin{center}
					$\eta_\mathcal{C}=$
					$\begin{cases}
						p^{6m(2p^s-\ell)} & \text{if}\quad 1\leq \ell\leq p^s;\\
						p^{3m(3p^s-\ell)} & \text{if}\quad p^s \leq \ell < p^s+t;\\
						p^{3m(2p^s-t)} & \text{if} \quad  p^s+ t < \ell \leq 2p^s-1.
					\end{cases}$
				\end{center} 
				\item Let $\mathcal{C}=\langle (x^3-\alpha_0)^{p^s+t} +u(x^3-\alpha_0)^t z(x) \rangle $,  where $0\leq  t < p^s  $ and $z(x)$ is an invertible. Then  $Res(\mathcal{C})=\langle (x^3-\alpha_0)^\ell \rangle $ and  $Tor(\mathcal{C})=\langle (x^3-\alpha_0)^{p^s} \rangle $. Thus, $\eta_\mathcal{C}=p^{3m(2p^s-t)}$.
			\end{itemize}
			\item Type D: Non-principal ideals \\
			If $\mathcal{C}=\langle (x^3-\alpha_0)^\ell +u(x^3-\alpha_0)^t z(x),  u (x^3-\alpha_0)^{\mu} \rangle $,  where $1\leq  \ell \leq 2p^s-1$,  $0\leq  t < \ell  $ and $z(x)$ is 0 or $z(x)$ is an invertible, and deg $(z(x))\leq\mu-t-1$ and
			\begin{center}
				$\mu < \Im=$
				$\begin{cases}
					min\{\ell, p^s\}  & \text{if}\quad z(x)=0;\\
					min\{\ell,p^s,  2p^s-\ell+t\}  &\text{if}\quad z(x)\neq 0\quad \text{and}\quad \ell \neq p^s+t;\\
					p^s &\text{if}\quad z(x)\neq 0 \quad\text{and}\quad \ell= p^s+t.
				\end{cases}$
			\end{center}
			Then $Res(\mathcal{C})=\langle (x^3-\alpha_0)^\ell \rangle $ and $Tor(\mathcal{C})=\langle (x^3-\alpha_0)^{\mu} \rangle $. Thus, $\eta_\mathcal{C}=p^{3m(4p^s-\ell-\mu)}$.
		\end{enumerate}
	\end{theorem}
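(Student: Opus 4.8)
The plan is to compute $\eta_{\mathcal{C}}=\vert\mathcal{C}\vert$ from the multiplicativity $\vert\mathcal{C}\vert=\vert Res(\mathcal{C})\vert\cdot\vert Tor(\mathcal{C})\vert$ recorded just above, which comes from restricting the reduction map $\psi_u$ to $\mathcal{C}$: one gets $0\to\ker(\psi_u|_{\mathcal{C}})\to\mathcal{C}\to Res(\mathcal{C})\to 0$, and $Tor(\mathcal{C})\cong\ker(\psi_u|_{\mathcal{C}})$ because multiplication by $u$ is injective on $\frac{(\mathbb{F}_{p^m}+v\mathbb{F}_{p^m})[x]}{\langle x^{3p^s}-(\alpha_1+\alpha_3 v)\rangle}$. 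Since by \cite{chen2016constacyclic} every ideal of that ring is $\langle(x^3-\alpha_0)^j\rangle$ with exactly $p^{3m(2p^s-j)}$ codewords, the whole computation reduces to recognising, for each of the four types, which power of $x^3-\alpha_0$ cuts out $Res(\mathcal{C})$ and which cuts out $Tor(\mathcal{C})$.

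First I would settle $Res(\mathcal{C})$. The trivial ideals give $Res(\mathcal{C})\in\{\langle 0\rangle,\langle 1\rangle\}$; a Type~B ideal $\langle u(x^3-\alpha_0)^\ell\rangle$ consists entirely of multiples of $u$, so $Res(\mathcal{C})=\langle 0\rangle$; and for Types~C and~D, applying $\psi_u$ to the monic generator $(x^3-\alpha_0)^\ell+u(\cdots)$ gives $(x^3-\alpha_0)^\ell$, while every element of $\mathcal{C}$ reduced modulo $u$ already lies in $\langle(x^3-\alpha_0)^\ell\rangle$ by the way $\ell$ arose in the classification of ideals of $R_{\alpha_1,\alpha_2,\alpha_3,\alpha_4}$, so $Res(\mathcal{C})=\langle(x^3-\alpha_0)^\ell\rangle$. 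Next I would pin down $Tor(\mathcal{C})$, i.e.\ the least $j$ with $u(x^3-\alpha_0)^j\in\mathcal{C}$. For Type~B this $j$ is $\ell$: the generator forces $u(x^3-\alpha_0)^\ell\in\mathcal{C}$, and the reverse inclusion is immediate from the shape of $\langle u\rangle$. For Type~C it is the number $\Im$ of Theorem~\ref{2.7}. For Type~D the generating set has been chosen so that $u(x^3-\alpha_0)^\mu$ is a generator while $\mu<\Im$, hence $Tor(\mathcal{C})=\langle(x^3-\alpha_0)^\mu\rangle$.

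The rest is arithmetic. Plugging into $\vert\mathcal{C}\vert=p^{3m(2p^s-\ell_{\mathrm{Res}})}\,p^{3m(2p^s-\ell_{\mathrm{Tor}})}$: the trivial cases give $1$ and $p^{12mp^s}$; Type~B gives $p^{3m(2p^s-\ell)}$. For Type~C with $z(x)=0$ one has $\Im=\min\{\ell,p^s\}$, giving $p^{6m(2p^s-\ell)}$ when $\ell\le p^s$ and $p^{3m(3p^s-\ell)}$ otherwise. For Type~C with $z(x)$ invertible and $\ell\neq p^s+t$ one has $\Im=\min\{\ell,p^s,2p^s+t-\ell\}$; the ranges $1\le\ell\le p^s$, $p^s\le\ell<p^s+t$ and $p^s+t<\ell\le 2p^s-1$ make $\Im$ equal to $\ell$, $p^s$ and $2p^s+t-\ell$, whence the codeword exponents $6m(2p^s-\ell)$, $3m(2p^s-\ell)+3mp^s=3m(3p^s-\ell)$ and $3m(2p^s-\ell)+3m(\ell-t)=3m(2p^s-t)$. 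When $\ell=p^s+t$ one has $\Im=p^s$, $\vert Res(\mathcal{C})\vert=p^{3m(p^s-t)}$ and $\vert Tor(\mathcal{C})\vert=p^{3mp^s}$, again summing to $p^{3m(2p^s-t)}$. Type~D gives $p^{3m(2p^s-\ell)}\,p^{3m(2p^s-\mu)}=p^{3m(4p^s-\ell-\mu)}$.

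The main obstacle is bookkeeping rather than conceptual: one must verify in each Type~C sub-range that the two exponents collapse to the stated value, and — more delicately — that $Tor(\mathcal{C})$ is exactly $\langle(x^3-\alpha_0)^{\Im}\rangle$ (resp.\ $\langle(x^3-\alpha_0)^{\mu}\rangle$) and not a larger ideal, i.e.\ that no $u(x^3-\alpha_0)^j$ with $j$ below $\Im$ (resp.\ $\mu$) lies in $\mathcal{C}$. That minimality is precisely what is built into the definition of $\Im$ in Theorem~\ref{2.7} and of $\mu$ in the Type~D construction, so once those are invoked the proof closes. The one ingredient specific to odd characteristic, responsible for the extra thresholds at $\ell=p^s$ and $\ell=p^s+t$, is the relation $\langle(x^3-\alpha_0)^{2p^s}\rangle=\langle u(x^3-\alpha_0)^{p^s}\rangle$ of Lemma~\ref{4.4}.
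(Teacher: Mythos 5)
Your proposal is correct and follows exactly the route the paper intends: the paper states this theorem without a written proof, relying on the $\vert\mathcal{C}\vert=\vert Res(\mathcal{C})\vert\cdot\vert Tor(\mathcal{C})\vert$ decomposition and the classification of ideals of $\frac{(\mathbb{F}_{p^m}+v\mathbb{F}_{p^m})[x]}{\langle x^{3p^s}-(\alpha_1+\alpha_3 v)\rangle}$ set up in the paragraph immediately preceding it, together with Theorem \ref{2.7} for $\Im$. Your identification of $Res$ and $Tor$ in each type and the exponent arithmetic in all sub-ranges check out.
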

	\subsection{Duals of constacyclic codes}
	\begin{lemma}
		Let $\pi(x)=(x^3-\alpha_0)^{j}+u\sum\limits_{i=0}^{t}(a_{i}x^2+b_{i}x+c_{i})(x^3-\alpha_0)^{i}$ be a polynomial over $R_{\alpha_1, \alpha_2,  \alpha_3, \alpha_4}$,  where $t<j$. Then
		\begin{equation*}
			\pi^*(x)=(-\alpha_0)^{j}(x^3-\alpha_0^{-1})^{j}+u\sum\limits_{i=0}^{t}(c_{i}x^2+b_{i}x+a_{i})(-\alpha_0)^{i}(x^3-\alpha_0^{-1})^ix^{3j-3i-2}.
		\end{equation*}
	\end{lemma}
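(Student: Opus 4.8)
The plan is to compute $\pi^*(x)$ directly from the definition of the reciprocal polynomial, using Lemma~\ref{2.6} to separate the sum and product structure. First I would pin down $\deg\pi(x)$: the summand $(x^3-\alpha_0)^j$ has degree $3j$ with leading coefficient $1$, while each term $u(a_ix^2+b_ix+c_i)(x^3-\alpha_0)^i$ has degree at most $3i+2\le 3t+2$, and since $t<j$ we get $3t+2\le 3(j-1)+2=3j-1<3j$. Hence the coefficient of $x^{3j}$ in $\pi(x)$ equals $1\neq 0$ in $\mathcal{R}$, so $\deg\pi(x)=3j$ and, by definition, $\pi^*(x)=x^{3j}\pi(x^{-1})$.

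Next I would split $\pi=g+h$ with $g(x)=(x^3-\alpha_0)^j$ and $h(x)=u\sum_{i=0}^{t}(a_ix^2+b_ix+c_i)(x^3-\alpha_0)^i$, and treat the two contributions to $x^{3j}\pi(x^{-1})$ separately. (Equivalently, one may invoke Lemma~\ref{2.6}(1): $x^{\deg g-\deg h}h^*(x)=x^{\deg g}h(x^{-1})$, so the exact value of $\deg h$ never enters.) For the $g$-part, $x^{3j}g(x^{-1})=(x^3)^j(x^{-3}-\alpha_0)^j=(1-\alpha_0x^3)^j=(-\alpha_0)^j(x^3-\alpha_0^{-1})^j$; this can also be read off from Lemma~\ref{2.6}(2) together with $(x^3-\alpha_0)^*=1-\alpha_0x^3=-\alpha_0(x^3-\alpha_0^{-1})$.

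For the $h$-part I would expand termwise. For each $i$, write $x^{3j}=x^{3j-3i}\cdot x^{3i}$ and use $x^{3i}(x^{-3}-\alpha_0)^i=(1-\alpha_0x^3)^i=(-\alpha_0)^i(x^3-\alpha_0^{-1})^i$, so that
\begin{align*}
x^{3j}\bigl(a_ix^{-2}+b_ix^{-1}+c_i\bigr)(x^{-3}-\alpha_0)^i
&=x^{3j-3i}\bigl(a_ix^{-2}+b_ix^{-1}+c_i\bigr)\cdot x^{3i}(x^{-3}-\alpha_0)^i\\
&=\bigl(c_ix^2+b_ix+a_i\bigr)\,x^{3j-3i-2}\,(-\alpha_0)^i(x^3-\alpha_0^{-1})^i .
\end{align*}
Here $3j-3i-2\ge 1>0$ because $i\le t\le j-1$, so every exponent occurring is nonnegative and the result is a genuine polynomial. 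Multiplying through by $u$, summing over $i$, and adding the $g$-part yields exactly the asserted expression for $\pi^*(x)$.

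There is no real obstacle in this argument; the only points needing care are the exponent bookkeeping and the role of the hypothesis $t<j$, which is used twice: once to guarantee $\deg\pi(x)=3j$ (so that $\pi^*(x)=x^{3j}\pi(x^{-1})$ is legitimate) and once to ensure $3j-3i-2\ge 0$, so that no negative powers of $x$ appear. If one insists on arguing strictly through Lemma~\ref{2.6} rather than through $\pi^*(x)=x^{3j}\pi(x^{-1})$, the mild subtlety is that $\deg h$ may be smaller than $3t+2$ when the top coefficients $a_t,b_t$ vanish; this is harmless precisely because $x^{\deg g-\deg h}h^*(x)$ does not depend on $\deg h$.
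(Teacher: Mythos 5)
Your proposal is correct and follows essentially the same route as the paper: decompose $\pi$ into the monic part $(x^3-\alpha_0)^j$ and the $u$-part, use $(x^3-\alpha_0)^*=-\alpha_0(x^3-\alpha_0^{-1})$ together with multiplicativity of $*$ on each power, and insert the shift $x^{3j-3i-2}$ coming from Lemma~\ref{2.6}(1). The paper simply applies Lemma~\ref{2.6} formally, whereas you additionally verify the degree bookkeeping (that $t<j$ forces $\deg\pi=3j$ and keeps all exponents nonnegative, and that the formula is insensitive to $\deg h$ dropping when $a_t,b_t$ vanish) — a point the paper glosses over but which your argument handles correctly.
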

	\begin{proof}
		By Lemma \ref{2.6},
		\begin{equation*}
			[(x^3-\alpha_0)^{k}]^{*}=[(x^3-\alpha_0)^{*}]^{k}=(-\alpha_0 x^3+1)^k=(-\alpha_0)^k(x^3-\alpha_0^{-1})^k.
		\end{equation*}
		Applying Lemma \ref{2.6}. again, we have 
		\begin{align*}
			\pi^*(x)&=[(x^3-\alpha_0)^{j}]^{*}+u\sum\limits_{i=0}^{t}(a_{i}x^2+b_{i}x+c_{i})^*[(x^3-\alpha_0)^{i}]^{*}x^{3j-3i-2}\\
			&=(-\alpha_0)^{j}(x^3-\alpha_0^{-1})^{j}+u\sum\limits_{i=0}^{t}(c_{i}x^2+b_{i}x+a_{i})(-\alpha_0)^{i}(x^3-\alpha_0^{-1})^ix^{3j-3i-2}.
		\end{align*}
	\end{proof}
	\begin{lemma}{\label{5.2}}
		If $\mathcal{C}=\langle(x^3-\alpha_0)^\ell+u(x^3-\alpha_0)^tz(x),  u(x^3-\alpha_0)^\mu \rangle$,  then the smallest positive integer $\varepsilon$ such that $u(x^3-\alpha_0)^{\varepsilon} \in \mathcal{A}(\mathcal{C})$ is $2p^s-\ell$. 
	\end{lemma}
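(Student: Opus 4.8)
The plan is to imitate the proof of Lemma~\ref{8.4}, with one extra wrinkle coming from the fact that in $R_{\alpha_1,\alpha_2,\alpha_3,\alpha_4}$ the element $x^3-\alpha_0$ is nilpotent of index $3p^s$ (Lemma~\ref{2.3}), not $2p^s$. First I would reduce the membership $u(x^3-\alpha_0)^{\varepsilon}\in\mathcal{A}(\mathcal{C})$ to a condition on a single power of $x^3-\alpha_0$: it suffices that $u(x^3-\alpha_0)^{\varepsilon}$ annihilate both generators of $\mathcal{C}$. Multiplying it by $u(x^3-\alpha_0)^{\mu}$ gives $0$ automatically, since $u^2=0$; multiplying it by $(x^3-\alpha_0)^{\ell}+u(x^3-\alpha_0)^{t}z(x)$ gives $u(x^3-\alpha_0)^{\varepsilon+\ell}$, the cross term again vanishing because $u^2=0$. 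Hence $u(x^3-\alpha_0)^{\varepsilon}\in\mathcal{A}(\mathcal{C})$ if and only if $u(x^3-\alpha_0)^{\varepsilon+\ell}=0$ in $R_{\alpha_1,\alpha_2,\alpha_3,\alpha_4}$.

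The key step is then to determine the exact exponent at which $u(x^3-\alpha_0)^{k}$ vanishes. By Lemma~\ref{2.3}, $(x^3-\alpha_0)^{p^s}=\alpha_2 u+\alpha_3 v+\alpha_4 uv$, so $u(x^3-\alpha_0)^{p^s}=\alpha_3 uv\neq 0$; and by Lemma~\ref{4.4}, $u(x^3-\alpha_0)^{p^s}=(2\alpha_2)^{-1}(x^3-\alpha_0)^{2p^s}$, where $2\alpha_2$ is a unit because the characteristic is odd and $\alpha_2\neq 0$. Consequently $u(x^3-\alpha_0)^{p^s+j}=(2\alpha_2)^{-1}(x^3-\alpha_0)^{2p^s+j}$ for every $j\geq 0$, which is $0$ precisely when $2p^s+j\geq 3p^s$, i.e. $j\geq p^s$; and for $k<p^s$, multiplying $u(x^3-\alpha_0)^{k}$ by $(x^3-\alpha_0)^{p^s-k}$ yields the nonzero element $\alpha_3 uv$, so $u(x^3-\alpha_0)^{k}\neq 0$. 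Therefore $u(x^3-\alpha_0)^{k}=0$ if and only if $k\geq 2p^s$.

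Combining the two steps, $u(x^3-\alpha_0)^{\varepsilon}\in\mathcal{A}(\mathcal{C})$ holds exactly when $\varepsilon+\ell\geq 2p^s$, so the smallest positive $\varepsilon$ with this property is $2p^s-\ell$; this is genuinely positive since $\ell\leq 2p^s-1$. I expect the only real obstacle to be the middle step, namely pinning down that $u(x^3-\alpha_0)^{k}$ already vanishes at $k=2p^s$ even though $x^3-\alpha_0$ itself only becomes nilpotent at $3p^s$; once Lemmas~\ref{2.3} and~\ref{4.4} are invoked this is immediate, and everything else is routine bookkeeping with $u^2=0$.
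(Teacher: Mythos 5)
Your proof is correct and follows essentially the same route as the paper's own (one-line) argument: reduce membership of $u(x^3-\alpha_0)^{\varepsilon}$ in $\mathcal{A}(\mathcal{C})$ to the vanishing of $u(x^3-\alpha_0)^{\varepsilon+\ell}$, then use Lemmas \ref{2.3} and \ref{4.4} to show $u(x^3-\alpha_0)^{k}=0$ exactly when $k\geq 2p^s$. In fact you supply the details the paper omits -- the cross terms killed by $u^2=0$, the precise vanishing threshold in odd characteristic, and the sufficiency of $\varepsilon=2p^s-\ell$, which the paper's proof (phrased only as a lower bound on $\varepsilon$) leaves implicit.
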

	\begin{proof}
		Suppose
		\begin{equation*}
			[(x^3-\alpha_0)^\ell+u(x^3-\alpha_0)^tz(x)]u(x^3-\alpha_0)^{\varepsilon}=0.
		\end{equation*}
		Then from Lemma \ref{2.3} and \ref{4.4},  we have $\ell+\varepsilon \geq 2p^s$,  i.e.,  $\varepsilon \geq 2p^s-\ell$.
	\end{proof}
	\begin{theorem}
		Let $\mathcal{C}=\langle u(x^3-\alpha_0)^\ell \rangle $ be a  $(\alpha_1 +\alpha_2 u+  \alpha_3 v + \alpha_4 uv)$-constacyclic code of length $3p^s$ over $\mathcal{R}$. Then $\mathcal{C}^{\perp}=\langle (x^3-\alpha_0^{-1})^{2p^s-\ell},  u \rangle$.
	\end{theorem}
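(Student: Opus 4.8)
The plan is to follow the same route as in Theorem~\ref{8.5}: identify the annihilator $\mathcal{A}(\mathcal{C})$ explicitly, then pass to the dual through the identification $\mathcal{C}^{\perp}=\mathcal{A}(\mathcal{C})^{*}$ \cite{chen2016constacyclic}, keeping everything under control by the cardinality identity $\vert\mathcal{C}\vert\cdot\vert\mathcal{C}^{\perp}\vert=\vert\mathcal{R}\vert^{3p^{s}}=p^{12mp^{s}}$.

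First I would produce a sub-annihilator by exhibiting two elements. Since $u^{2}=0$, clearly $u\cdot u(x^{3}-\alpha_{0})^{\ell}=0$, so $u\in\mathcal{A}(\mathcal{C})$. Next, $(x^{3}-\alpha_{0})^{2p^{s}-\ell}\cdot u(x^{3}-\alpha_{0})^{\ell}=u(x^{3}-\alpha_{0})^{2p^{s}}$, and by Lemma~\ref{4.4} one has $(x^{3}-\alpha_{0})^{2p^{s}}=2\alpha_{2}u(x^{3}-\alpha_{0})^{p^{s}}$, whence $u(x^{3}-\alpha_{0})^{2p^{s}}=2\alpha_{2}u^{2}(x^{3}-\alpha_{0})^{p^{s}}=0$; thus $(x^{3}-\alpha_{0})^{2p^{s}-\ell}\in\mathcal{A}(\mathcal{C})$. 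Hence $\langle(x^{3}-\alpha_{0})^{2p^{s}-\ell},u\rangle\subseteq\mathcal{A}(\mathcal{C})$.

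The next step is a cardinality count forcing equality. From the Type~B description, $\vert\mathcal{C}\vert=p^{3m(2p^{s}-\ell)}$, so $\vert\mathcal{A}(\mathcal{C})\vert=\vert\mathcal{C}^{\perp}\vert=p^{12mp^{s}}/p^{3m(2p^{s}-\ell)}=p^{3m(2p^{s}+\ell)}$. On the other hand, for $J=\langle(x^{3}-\alpha_{0})^{2p^{s}-\ell},u\rangle$ the reduction modulo $u$ gives $Res(J)=\langle(x^{3}-\alpha_{0})^{2p^{s}-\ell}\rangle$, which has $p^{3m\ell}$ codewords in the chain ring $\frac{(\mathbb{F}_{p^{m}}+v\mathbb{F}_{p^{m}})[x]}{\langle x^{3p^{s}}-(\alpha_{1}+\alpha_{3}v)\rangle}$, while $Tor(J)$ is the whole of that ring (because $u$ itself is a generator of $J$), contributing $p^{6mp^{s}}$ codewords; thus $\vert J\vert=p^{3m\ell+6mp^{s}}=p^{3m(2p^{s}+\ell)}$. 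Since $J\subseteq\mathcal{A}(\mathcal{C})$ and both are finite of the same size, $\mathcal{A}(\mathcal{C})=\langle(x^{3}-\alpha_{0})^{2p^{s}-\ell},u\rangle$. Finally I would reciprocate: by Lemma~\ref{2.6}, $\big[(x^{3}-\alpha_{0})^{2p^{s}-\ell}\big]^{*}=(1-\alpha_{0}x^{3})^{2p^{s}-\ell}=(-\alpha_{0})^{2p^{s}-\ell}(x^{3}-\alpha_{0}^{-1})^{2p^{s}-\ell}$ and $u^{*}=u$, both lying in the ideal $\mathcal{A}(\mathcal{C})^{*}=\mathcal{C}^{\perp}$; as $(-\alpha_{0})^{2p^{s}-\ell}$ is a unit, $\langle(x^{3}-\alpha_{0}^{-1})^{2p^{s}-\ell},u\rangle\subseteq\mathcal{C}^{\perp}$. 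Since reciprocation is a bijection, $\vert\mathcal{C}^{\perp}\vert=\vert\mathcal{A}(\mathcal{C})\vert=p^{3m(2p^{s}+\ell)}=\vert\langle(x^{3}-\alpha_{0}^{-1})^{2p^{s}-\ell},u\rangle\vert$ by the same $Res$/$Tor$ count, and equality follows.

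The only genuinely section-specific subtlety — and the step I expect to need care — is that here $x^{3}-\alpha_{0}$ is nilpotent of index $3p^{s}$ rather than $2p^{s}$, so the annihilator element $(x^{3}-\alpha_{0})^{2p^{s}-\ell}$ does its job only through the collapse $u(x^{3}-\alpha_{0})^{2p^{s}}=0$ coming from Lemma~\ref{4.4}; one must also be careful in the cardinality computation that, with $u$ appearing among the generators, the torsion component is the full quotient ring rather than a proper ideal. Beyond that, the argument is the routine bookkeeping already used in Theorem~\ref{8.5}.
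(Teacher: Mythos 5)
Your proof is correct, and it reaches the stated dual by essentially the same two-step template as the paper (exhibit a sub-ideal of the right object, then force equality by counting), but the way you establish the key containment is genuinely different and, in this section, actually more careful than the paper's. The paper argues via reversal of containments: from $\mathcal{C}\subseteq\langle (x^3-\alpha_0)^\ell\rangle$ and $\mathcal{C}\subseteq\langle u\rangle$ it writes $\langle (x^3-\alpha_0^{-1})^{2p^s-\ell}\rangle=\langle (x^3-\alpha_0)^\ell\rangle^{\perp}\subseteq\mathcal{C}^{\perp}$ and $\langle u\rangle^{\perp}=\langle u\rangle\subseteq\mathcal{C}^{\perp}$. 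That first identification is carried over verbatim from the $\alpha=\alpha_1+\alpha_3v+\alpha_4uv$ section, where $(x^3-\alpha_0)^{2p^s}=0$; here, with $\alpha_2\neq 0$, one has $(x^3-\alpha_0)^{2p^s}=2\alpha_2u(x^3-\alpha_0)^{p^s}\neq 0$, so $\langle(x^3-\alpha_0)^\ell\rangle^{\perp}$ is \emph{not} $\langle(x^3-\alpha_0^{-1})^{2p^s-\ell}\rangle$ (the paper's own Type~C dual theorem in this section gives a generator with an extra $-2\alpha_2u(\cdot)$ term). The containment the paper wants is still true, but only because $u$ is also present, i.e.\ precisely because $u\cdot u(x^3-\alpha_0)^{\ell}=0$ and $u(x^3-\alpha_0)^{2p^s}=2\alpha_2u^2(x^3-\alpha_0)^{p^s}=0$ — which is exactly the computation you perform directly on the annihilator. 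So your route (compute $\mathcal{A}(\mathcal{C})$ explicitly, count it via the $Res$/$Tor$ decomposition with $Tor$ equal to the full quotient ring, then reciprocate) buys a justification of the containment that does not lean on an identity that fails here, at the modest cost of one extra reciprocation step at the end; the cardinality bookkeeping in both arguments is identical.
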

	\begin{proof}
		As $\mathcal{C}\subseteq \langle u \rangle $ and $\mathcal{C} \subseteq \langle (x^3-\alpha_0)^\ell \rangle $,  we have that $\langle (x^3-\alpha_0^{-1})^{2p^s-\ell }\rangle =\langle (x^3-\alpha_0)^\ell \rangle ^{\perp} \subseteq \mathcal{C}^{\perp}$ and $\langle u \rangle =\langle u \rangle ^{\perp} \subseteq \mathcal{C}^{\perp}$. So $\langle (x^3-\alpha_0^{-1})^{2p^s-\ell },  u \rangle \subseteq \mathcal{C}^{\perp}$. We have $\vert \mathcal{C}\vert=p^{3m(2p^s-\ell)}$ and \begin{equation*}
			\vert \langle (x^3-\alpha_0^{-1})^{2p^s-\ell },  u \rangle \vert =p^{3m(2p^s+\ell)}.
		\end{equation*}
		Therefore, 
		\begin{equation*}
			\vert \mathcal{C}^{\perp} \vert=\frac{\vert \mathcal{R} \vert ^{3p^s}}{\vert \mathcal{C} \vert}=\frac{p^{12mp^s}}{p^{3m(2p^s-\ell)}}=p^{3m(2p^s+\ell)}=\vert \langle (x^3-\alpha_0^{-1})^{2p^s-\ell },  u \rangle \vert.
		\end{equation*}
		Hence,  $\mathcal{C}^{\perp}=\langle (x^3-\alpha_0^{-1})^{2p^s-\ell },  u \rangle$.
	\end{proof}
	\begin{theorem}
		Let $\mathcal{C}$ be a $(\alpha_1 +\alpha_2 u+  \alpha_3 v + \alpha_4 uv)$-constacyclic code associate to the ideal $\mathcal{C}=\langle (x^3-\alpha_0)^\ell +u(x^3-\alpha_0)^t z(x) \rangle $,  where $z(x)$ is 0 or an invertible. Then the dual code $\mathcal{C}^\perp $ associate to the ideal $\mathcal{A}(\mathcal{C})^*$ is determined as follows:
		\begin{enumerate}
			\item If $z(x)$ is 0 and $1\leq \ell\leq p^s$,  then $\mathcal{A}(\mathcal{C})^* = \langle \phi_1^*(x)  \rangle$,  where $\phi_1^*(x)=(-\alpha_0)^{2p^s-\ell}(x^3-\alpha_0^{-1})^{2p^s-\ell }-2\alpha_2 u (-\alpha_0)^{p^s-\ell}(x^3-\alpha_0^{-1})^{p^s-\ell } x^{3p^s}  $.
			\item If $z(x)$ is 0 and $p^s < \ell \leq 2p^s-1$,  then $\mathcal{A}(\mathcal{C})^* =\langle \phi_2^*(x), u(x^3-\alpha_0^{-1})^{2p^s-\ell }  \rangle$,  where $ \phi_2^*(x)= (-\alpha_0)^{p^s}(x^3-\alpha_0^{-1})^{p^s}-2\alpha_2 u x^{3p^s} $.
			\item If $z(x)$ is an invertible and $1\leq \ell\leq p^s$,  $\ell \neq p^s+t$,   then $\mathcal{A}(\mathcal{C})^*=\langle \phi^*_3(x) \rangle $,  where $\phi^*_3(x)=(-\alpha_0)^{2p^s-\ell}(x^3-\alpha_0^{-1})^{2p^s-\ell}-2\alpha_2 u (-\alpha_0)^{p^s-\ell}(x^3-\alpha_0^{-1})^{p^s-\ell}x^{3p^s}-u(-\alpha_0)^{2p^s-2\ell+t}(x^3-\alpha_0^{-1})^{2p^s-2\ell+t} \\
   \sum \limits_{\kappa=0}^{\ell-t-1}(z_{2\kappa}x^2+z_{1\kappa}x+z_{0\kappa})(-\alpha_0)^{\kappa} (x^3-\alpha_0^{-1})^{\kappa} x^{3\ell-3t-3\kappa-2}$. 
			\item If $z(x)$ is an invertible and $p^s\leq \ell \leq p^s+t$,  then $\mathcal{A}(\mathcal{C})^*=\langle \phi^*_4(x),  (x^3-\alpha_0^{-1})^{2p^s-\ell} \rangle $,  where $\phi^*_4(x)=(-\alpha_0)^{p^s}(x^3-\alpha_0^{-1})^{p^s}-2\alpha_2 u x^{3p^s}-u(-\alpha_0)^{p^s+t-\ell}(x^3-\alpha_0^{-1})^{p^s+t-\ell} \sum \limits_{\kappa=0}^{p^s-t-1}(z_{2\kappa}x^2+z_{1\kappa}x+z_{0\kappa})(-\alpha_0)^{\kappa}(x^3-\alpha_0^{-1})^{\kappa} x^{3\ell-3t-3\kappa-2} $.
			\item If $z(x)$ is an invertible and $p^s+ t < \ell \leq 2p^s-1$,  then $\mathcal{A}(\mathcal{C})^*=\langle \phi^*_5(x),  (x^3-\alpha_0^{-1})^{2p^s-\ell} \rangle $,  where $\phi^*_5(x)= (-\alpha_0)^{\ell-t}(x^3-\alpha_0^{-1})^{\ell-t}-2\alpha_2 u(-\alpha_0)^{\ell-p^s-t}(x^3-\alpha_0^{-1})^{\ell-p^s-t}x^{3p^s}-u\sum \limits_{\kappa=0}^{2p^s-\ell-1}(z_{2\kappa}x^2+z_{1\kappa}x+z_{0\kappa})(-\alpha_0)^{\kappa}(x^3-\alpha_0^{-1})^{\kappa} x^{3\ell-3t-3\kappa-2}$.
			\item If $z(x)$ is an invertible and $\ell=p^s+t$,   then $\mathcal{A}(\mathcal{C})^*=\langle \phi^*_6(x) ,  (x^3-\alpha_0^{-1})^{2p^s-\ell} \rangle$,  where $\phi^*_6(x)=(-\alpha_0)^{p^s}(x^3-\alpha_0^{-1})^{p^s}-2\alpha_2 u x^{3p^s}- u\sum \limits_{\kappa=0}^{p^s-t-1}(z_{2\kappa}x^2+z_{1\kappa}x+z_{0\kappa})(-\alpha_0)^{\kappa}(x^3-\alpha_0^{-1})^{\kappa} x^{p^s-2\kappa -1} $.
		\end{enumerate}
	\end{theorem}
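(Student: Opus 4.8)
The plan is to compute the annihilator ideal $\mathcal{A}(\mathcal{C})$ explicitly in each of the six cases and then pass to $\mathcal{A}(\mathcal{C})^{*}$, using $\mathcal{C}^{\perp}=\mathcal{A}(\mathcal{C})^{*}$ together with the reciprocal-polynomial lemma proved just above. By Lemma~\ref{5.2}, $2p^s-\ell$ is the least exponent $\varepsilon$ with $u(x^3-\alpha_0)^{\varepsilon}\in\mathcal{A}(\mathcal{C})$, so $u(x^3-\alpha_0)^{2p^s-\ell}$ always lies in $\mathcal{A}(\mathcal{C})$ and will serve as the second generator precisely when $\mathcal{A}(\mathcal{C})$ is non-principal. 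To find a monic generator I would look for $\phi(x)=(x^3-\alpha_0)^{\rho}+u(x^3-\alpha_0)^{\xi}r(x)$, with $r(x)$ zero or a unit (often a constant), such that $\phi(x)\big[(x^3-\alpha_0)^{\ell}+u(x^3-\alpha_0)^{t}z(x)\big]=0$. Expanding with $u^2=0$ gives $(x^3-\alpha_0)^{\rho+\ell}+u(x^3-\alpha_0)^{\rho+t}z(x)+u(x^3-\alpha_0)^{\xi+\ell}r(x)$, and the key device is Lemma~\ref{4.4}: each time an exponent of $(x^3-\alpha_0)$ reaches $2p^s$ it must be rewritten as $(x^3-\alpha_0)^{k}=2\alpha_2\,u(x^3-\alpha_0)^{k-p^s}$, vanishing once $k\ge 3p^s$ by the nilpotency index $3p^s$ of Lemma~\ref{2.3}. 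Hence the pure power $(x^3-\alpha_0)^{\rho+\ell}$ spawns, for $\rho+\ell\ge 2p^s$, an extra $u$-term $2\alpha_2u(x^3-\alpha_0)^{\rho+\ell-p^s}$ to be cancelled.

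Taking $\rho$ minimal then forces $\rho=2p^s-\ell$ when no pure power survives cancellation (the range $\ell\le p^s$), and the smaller values $\rho=\ell-t$ or $\rho=p^s$ in the ranges dictated by $\Im$ in Theorem~\ref{2.7}; solving for $\xi$ and $r(x)$ so the three $u$-terms cancel yields the candidate $\phi_i(x)$. For $z(x)=0$ and $\ell\le p^s$ one gets $\phi_1(x)=(x^3-\alpha_0)^{2p^s-\ell}-2\alpha_2u(x^3-\alpha_0)^{p^s-\ell}$; in the other cases $\phi_i(x)$ must additionally absorb $u(x^3-\alpha_0)^{\rho+t}z(x)$ by taking $r(x)=-z(x)$ up to a $z$-dependent correction where the wrap-around and the $z$-term collide, keeping $\deg z(x)\le\Im-t-1$. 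The inclusions $\langle\phi_i(x)\rangle\subseteq\mathcal{A}(\mathcal{C})$, respectively $\langle\phi_i(x),u(x^3-\alpha_0)^{2p^s-\ell}\rangle\subseteq\mathcal{A}(\mathcal{C})$, are then immediate; computing the size of the left side from the $(x^3-\alpha_0)$-degree $\rho$ and the torsion it forces, and comparing with $|\mathcal{A}(\mathcal{C})|=|\mathcal{C}^{\perp}|=p^{12mp^s}/\eta_{\mathcal{C}}$ where $\eta_{\mathcal{C}}$ comes from the theorem computing the number of codewords, gives equality. Thus $\mathcal{A}(\mathcal{C})$ is the listed ideal, and applying the reciprocal-polynomial lemma — using $[(x^3-\alpha_0)^{k}]^{*}=(-\alpha_0)^{k}(x^3-\alpha_0^{-1})^{k}$, the reversal of a constant $c_i$ into $c_ix^2$, and the degree shift $x^{3\rho-3i-2}$ (which collapses to $x^{3p^s}$ in the wrap-around term) — converts each generator into the stated $\phi_i^{*}(x)$, the substitution $\alpha_0\mapsto\alpha_0^{-1}$ being consistent with duality sending $\alpha$-constacyclic to $\alpha^{-1}$-constacyclic.

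The main obstacle I expect is the bookkeeping forced by Lemma~\ref{4.4} in odd characteristic: unlike the even-characteristic setting of Section~4, $(x^3-\alpha_0)$ now has nilpotency index $3p^s$, so every exponent crossing $2p^s$ emits a $2\alpha_2u$-summand, and this is precisely what produces the extra term $2\alpha_2u(-\alpha_0)^{\cdots}(x^3-\alpha_0^{-1})^{\cdots}x^{3p^s}$ present in all six formulas. Pinning down the case boundaries — $\ell\le p^s$ versus $\ell>p^s$ for principality, and the split at $\ell=p^s+t$ inherited from Theorem~\ref{2.7} — and checking in each range both that the chosen $(\rho,\xi,r(x))$ genuinely annihilates the generator and that $\deg z(x)\le\Im-t-1$ is preserved, is where the substantive work lies; the reciprocal computation itself is then routine.
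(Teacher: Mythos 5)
Your proposal follows essentially the same route as the paper's proof: the ansatz $(x^3-\alpha_0)^{\rho}+u(x^3-\alpha_0)^{\xi}r(x)$ for annihilator elements, the wrap-around identity $(x^3-\alpha_0)^{2p^s}=2\alpha_2 u(x^3-\alpha_0)^{p^s}$ from Lemma~\ref{4.4} as the source of the $-2\alpha_2 u\,x^{3p^s}$ correction terms, the cardinality comparison $\vert\mathcal{A}(\mathcal{C})\vert=p^{12mp^s}/\eta_{\mathcal{C}}$ to upgrade the inclusion to equality, and the reciprocal-polynomial lemma to pass to $\mathcal{A}(\mathcal{C})^{*}$. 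The case split at $\ell=p^s$ and $\ell=p^s+t$ and the specific generators (e.g.\ $\phi_1(x)=(x^3-\alpha_0)^{2p^s-\ell}-2\alpha_2 u(x^3-\alpha_0)^{p^s-\ell}$) all match the paper's choices.
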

	\begin{proof}
		By Lemma \ref{5.2}, $2p^s-\ell$ is the smallest positive integer $\varepsilon$ such that $u(x^3-\alpha_0)^{\varepsilon} \in \mathcal{A}(\mathcal{C})$. If $(x^3-\alpha_0)^{\rho}+u(x^3-\alpha_0)^{\xi}r(x)\in \mathcal{A}(\mathcal{C})$ where $r(x)$ is 0 or an invertible then [$(x^3-\alpha_0)^{\rho}+u(x^3-\alpha_0)^{\xi}r(x)][(x^3-\alpha_0)^{\ell}+u(x^3-\alpha_0)^{t}z(x)]=(x^3-\alpha_0)^{\rho+\ell}+u(x^3-\alpha_0)^{\rho+t}z(x)+u(x^3-\alpha_0)^{\xi +\ell}r(x)=0$.
		\begin{enumerate}
			\item  If $z(x)$ is 0 and $1\leq \ell\leq p^s$,  then $\mathcal{C}=\langle (x^3-\alpha_0)^\ell \rangle $. If we choose $\rho=2p^s-\ell$,  $\xi=p^s-\ell$ and $r(x)=-2\alpha_2$ then by Lemma \ref{4.4},  $\phi_1(x)=(x^3-\alpha_0)^{2p^s-\ell}-2\alpha_2 u(x^3-\alpha_0)^{p^s-\ell} \in \mathcal{A}(\mathcal{C})$. Therefore, $\langle \phi_1(x) \rangle \subseteq \mathcal{A}(\mathcal{C})$. Furthermore, 
			\begin{equation*}
				p^{6m\ell}=\vert \langle \phi_1(x) \rangle \vert\leq \vert \mathcal{A}(\mathcal{C}) \vert =\vert \mathcal{A}(\mathcal{C})^{*} \vert =\vert \mathcal{C}^{\perp}\vert=\frac{p^{12mp^s}}{\vert \mathcal{C} \vert}=\frac{p^{12mp^s}}{p^{6m(2p^s-\ell)}}=p^{6m\ell}.
			\end{equation*}
			Thus, $\langle \phi_1(x) \rangle=\mathcal{A}(\mathcal{C})$ and  $ \mathcal{A}(\mathcal{C})^{*} = \langle \phi_1^*(x)  \rangle$,  where $\phi_1^*(x)=(-\alpha_0)^{2p^s-\ell}(x^3-\alpha_0^{-1})^{2p^s-\ell }-2\alpha_2 u (-\alpha_0)^{p^s-\ell}(x^3-\alpha_0^{-1})^{p^s-\ell } x^{3p^s}$.
			\item If $z(x)$ is 0 and $p^s < \ell \leq 2p^s-1$,  then $\mathcal{C}=\langle (x^3-\alpha_0)^\ell \rangle $. If we choose $\rho=p^s$,  $\xi=0$ and $r(x)=-2\alpha_2$ then by Lemma \ref{4.4},  $\phi_2(x)=(x^3-\alpha_0)^{p^s}-2\alpha_2 u \in \mathcal{A}(\mathcal{C})$. Therefore, $\langle \phi_2(x),  u(x^3-\alpha_0)^{2p^s-\ell} \rangle \subseteq \mathcal{A}(\mathcal{C})$. Furthermore, 
		\begin{align*}
				p^{3m(p^s+\ell)}=\vert \langle \phi_2(x), u(x^3-\alpha_0)^{2p^s-\ell}  \rangle \vert&\leq  \vert \mathcal{A}(\mathcal{C}) \vert 
				=\vert \mathcal{A}(\mathcal{C})^{*} \vert 
				=\vert \mathcal{C}^{\perp}\vert
				=\frac{p^{12mp^s}}{\vert \mathcal{C} \vert}
				=\frac{p^{12mp^s}}{p^{3m(3p^s-\ell)}}=p^{3m(p^s+\ell)}.
			\end{align*}
			Thus, $\langle \phi_2(x), u(x^3-\alpha_0)^{2p^s-\ell}  \rangle=\mathcal{A}(\mathcal{C})$ and  $ \mathcal{A}(\mathcal{C})^{*} = \langle \phi_2^*(x), u(x^3-\alpha_0^{-1})^{2p^s-\ell }  \rangle$,  where $ \phi_2^*(x)= (-\alpha_0)^{p^s}(x^3-\alpha_0^{-1})^{p^s}-2\alpha_2 u x^{3p^s} $.
			\item  If $z(x)$ is an invertible and $1\leq \ell\leq p^s$,  $\ell \neq p^s+t$,  then $\mathcal{C}=\langle (x^3-\alpha_0)^\ell+u(x^3-\alpha_0)^t z(x) \rangle $. If we choose $\rho=2p^s-\ell$,  $\xi=p^s-\ell$ and $r(x)=-2\alpha_2- z(x)(x^3-\alpha_0)^{p^s+t-\ell}$ then by Lemma \ref{4.4},  $\phi_3(x)=(x^3-\alpha_0)^{2p^s-\ell}+u(x^3-\alpha_0)^{p^s-\ell}(-2\alpha_2 -z(x)(x^3-\alpha_0)^{p^s+t-\ell}) \in \mathcal{A}(\mathcal{C})$. Therefore, $\langle \phi_3(x) \rangle \subseteq \mathcal{A}(\mathcal{C})$. Furthermore, 
			\begin{equation*}
				p^{6m\ell}=\vert \langle \phi_3(x) \rangle \vert\leq \vert \mathcal{A}(\mathcal{C}) \vert =\vert \mathcal{A}(\mathcal{C})^{*} \vert =\vert \mathcal{C}^{\perp}\vert=\frac{p^{12mp^s}}{\vert \mathcal{C} \vert}=\frac{p^{12mp^s}}{p^{4m(2p^s-\ell)}}=p^{6m\ell}.
			\end{equation*}
			Thus, $\langle \phi_3(x) \rangle=\mathcal{A}(\mathcal{C})$. 
			Let $z(x)=\sum\limits_{\kappa}^{}(z_{0\kappa}x^2+z_{1\kappa}x+z_{2\kappa})(x^3-\alpha_0)^{\kappa}$ where $z_{0\kappa},  z_{1\kappa}, z_{2\kappa} \in \mathbb{F}_{p^m}$ and $z_{00}x^2+z_{10}x+z_{20} \neq 0$. Here $\kappa \leq \Im-t-1=\ell-t-1$.
			\begin{align*}
				\phi_3(x)&=(x^3-\alpha_0)^{2p^s-\ell}+u(x^3-\alpha_0)^{p^s-\ell}(-2\alpha_2 -z(x)(x^3-\alpha_0)^{p^s+t-\ell})\\
				&=(x^3-\alpha_0)^{2p^s-\ell}-2\alpha_2 u(x^3-\alpha_0)^{p^s-\ell} -uz(x)(x^3-\alpha_0)^{2p^s+t-2\ell}\\
				&=(x^3-\alpha_0)^{2p^s-\ell}-2\alpha_2 u(x^3-\alpha_0)^{p^s-\ell} -u(x^3-\alpha_0)^{2p^s+t-2\ell}\sum\limits_{\kappa=0}^{\ell-t-1}(z_{0\kappa}x^2+z_{1\kappa}x+z_{2\kappa})(x^3-\alpha_0)^{\kappa}.
			\end{align*}
			Then 
			$\mathcal{A}(\mathcal{C})^{*}=\langle \phi^*_3(x) \rangle $,  where $\phi^*_3(x)=(-\alpha_0)^{2p^s-\ell}(x^3-\alpha_0^{-1})^{2p^s-\ell}-2\alpha_2 u (-\alpha_0)^{p^s-\ell}(x^3-\alpha_0^{-1})^{p^s-\ell}x^{3p^s}-u(-\alpha_0)^{2p^s+t-2\ell}(x^3-\alpha_0^{-1})^{2p^s+t-2\ell} \sum \limits_{\kappa=0}^{\ell-t-1}(z_{2\kappa}x^2+z_{1\kappa}x+z_{0\kappa})(-\alpha_0)^{\kappa}(x^3-\alpha_0^{-1})^{\kappa} x^{3\ell-3t-3\kappa-2}$. 
			\item If $z(x)$ is an invertible and $p^s\leq \ell < p^s+t$,  then $\mathcal{C}=\langle (x^3-\alpha_0)^\ell+u(x^3-\alpha_0)^t z(x) \rangle $. As above $\phi_4(x)=(x^3-\alpha_0)^{p^s}+u[-2\alpha_2-(x^3-\alpha_0)^{p^s+t-\ell}z(x)] \in \mathcal{A}(\mathcal{C})$. Therefore, $\langle \phi_2(x),  u(x^3-\alpha_0)^{2p^s-\ell} \rangle \subseteq \mathcal{A}(\mathcal{C})$. Furthermore, 
			\begin{equation*}
				p^{3m(p^s+\ell)}=\vert \langle \phi_4(x), u(x^3-\alpha_0)^{2p^s-\ell}  \rangle \vert\leq \vert \mathcal{A}(\mathcal{C}) \vert =\vert \mathcal{A}(\mathcal{C})^{*} \vert =\vert \mathcal{C}^{\perp}\vert=\frac{p^{12mp^s}}{\vert \mathcal{C} \vert}=\frac{p^{12mp^s}}{p^{3m(3p^s-\ell)}}=p^{3m(p^s+\ell)}.
			\end{equation*}
			Thus, $\langle \phi_4(x), u(x^3-\alpha_0)^{2p^s-\ell}  \rangle=\mathcal{A}(\mathcal{C})$.
			Let $z(x)=\sum\limits_{\kappa}^{}(z_{0\kappa}x^2+z_{1\kappa}x+z_{2\kappa})(x^3-\alpha_0)^{\kappa}$ where $z_{0\kappa},  z_{1\kappa}, z_{2\kappa} \in \mathbb{F}_{p^m}$ and $z_{00}x^2+z_{10}x+z_{20} \neq 0$. Here $\kappa \leq\Im-t-1=p^s-t-1$.
			\begin{align*}
				\phi_4(x)&=(x^3-\alpha_0)^{p^s}+u[-2\alpha_2-(x^3-\alpha_0)^{p^s+t-\ell}z(x)]\\
				&=(x^3-\alpha_0)^{p^s}-2\alpha_2 u-u(x^3-\alpha_0)^{p^s+t-\ell}\sum\limits_{\kappa=0}^{p^s-t-1}(z_{0\kappa}x^2+z_{1\kappa}x+z_{2\kappa})(x^3-\alpha_0)^{\kappa}.
			\end{align*}
			Then $ \mathcal{A}(\mathcal{C})^{*}=\langle \phi^*_4(x),  (x^3-\alpha_0^{-1})^{2p^s-\ell} \rangle $,  where $\phi^*_4(x)=(-\alpha_0)^{p^s}(x^3-\alpha_0^{-1})^{p^s}-2\alpha_2 u x^{3p^s}-u(-\alpha_0)^{p^s+t-\ell}(x^3-\alpha_0^{-1})^{p^s+t-\ell} \sum \limits_{\kappa=0}^{p^s-t-1}(z_{2\kappa}x^2+z_{1\kappa}x+z_{0\kappa})(-\alpha_0)^{\kappa}(x^3-\alpha_0^{-1})^{\kappa} x^{3\ell-3t-3\kappa-2} $.
			\item If $z(x)$ is an invertible and $p^s+ t < \ell \leq 2p^s-1$,  then $\mathcal{C}=\langle (x^3-\alpha_0)^\ell+u(x^3-\alpha_0)^t z(x) \rangle $. As above $\phi_5(x)=(x^3-\alpha_0)^{\ell-t}+u(x^3-\alpha_0)^{\ell-p^s-t}[-2\alpha_2-(x^3-\alpha_0)^{p^s+t-\ell}z(x)] \in \mathcal{A}(\mathcal{C})$. Therefore, $\langle \phi_5(x),  u(x^3-\alpha_0)^{2p^s-\ell} \rangle \subseteq \mathcal{A}(\mathcal{C})$. Furthermore, 
			\begin{equation*}
				p^{3m(2p^s+t)}=\vert \langle \phi_5(x), u(x^3-\alpha_0)^{2p^s-\ell}  \rangle \vert\leq \vert \mathcal{A}(\mathcal{C}) \vert =\vert \mathcal{A}(\mathcal{C})^{*} \vert =\vert \mathcal{C}^{\perp}\vert=\frac{p^{12mp^s}}{\vert \mathcal{C} \vert}=\frac{p^{12mp^s}}{p^{3m(2p^s-t)}}=p^{3m(2p^s+t)}.
			\end{equation*}
			Thus, $\langle \phi_5(x), u(x^3-\alpha_0)^{2p^s-\ell}  \rangle=\mathcal{A}(\mathcal{C})$.
			Let $z(x)=\sum\limits_{\kappa}^{}(z_{0\kappa}x^2+z_{1\kappa}x+z_{2\kappa})(x^3-\alpha_0)^{\kappa}$, where $z_{0\kappa},  z_{1\kappa}, z_{2\kappa} \in \mathbb{F}_{p^m}$ and $z_{00}x^2+z_{10}x+z_{20} \neq 0$. Here $\kappa \leq\Im-t-1=2p^s-\ell-1$.
			\begin{align*}
				\phi_5(x)&=(x^3-\alpha_0)^{\ell-t}+u(x^3-\alpha_0)^{\ell-p^s-t}[-2\alpha_2-(x^3-\alpha_0)^{p^s+t-\ell}z(x)]\\
				&=(x^3-\alpha_0)^{\ell-t}-2\alpha_2 u(x^3-\alpha_0)^{\ell-p^s-t}-u\sum\limits_{\kappa=0}^{2p^s-\ell-1}(z_{0\kappa}x^2+z_{1\kappa}x+z_{2\kappa})(x^3-\alpha_0)^{\kappa}.
			\end{align*}
			Then  $ \mathcal{A}(\mathcal{C})^{*}=\langle \phi^*_5(x),  (x^3-\alpha_0^{-1})^{2p^s-\ell} \rangle $,  where $\phi^*_5(x)= (-\alpha_0)^{\ell-t}(x^3-\alpha_0^{-1})^{\ell-t}-2\alpha_2 u(-\alpha_0)^{\ell-p^s-t}(x^3-\alpha_0^{-1})^{\ell-p^s-t}x^{3p^s}-u\sum \limits_{\kappa=0}^{2p^s-\ell-1}(z_{2\kappa}x^2+z_{1\kappa}x+z_{0\kappa})(-\alpha_0)^{\kappa}(x^3-\alpha_0^{-1})^{\kappa} x^{3\ell-3t-3\kappa-2}$.
			\item If $z(x)$ is an invertible and $\ell=p^s+t$,  then $\mathcal{C}=\langle (x^3-\alpha_0)^\ell+u(x^3-\alpha_0)^t z(x) \rangle $. As above $\phi_6(x)=(x^3-\alpha_0)^{p^s}-2\alpha_2 u-uz(x) \in \mathcal{A}(\mathcal{C})$. Therefore, $\langle \phi_6(x),  u(x^3-\alpha_0)^{2p^s-\ell} \rangle \subseteq \mathcal{A}(\mathcal{C})$. Furthermore, 
			\begin{equation*}
				p^{3m(2p^s+t)}=\vert \langle \phi_6(x), u(x^3-\alpha_0)^{2p^s-\ell}  \rangle \vert\leq \vert \mathcal{A}(\mathcal{C}) \vert =\vert \mathcal{A}(\mathcal{C})^{*} \vert =\vert \mathcal{C}^{\perp}\vert=\frac{p^{12mp^s}}{\vert \mathcal{C} \vert}=\frac{p^{12mp^s}}{p^{3m(2p^s-t)}}=p^{3m(2p^s+t)}.
			\end{equation*}
			Thus, $\langle \phi_6(x), u(x^3-\alpha_0)^{2p^s-\ell}  \rangle=\mathcal{A}(\mathcal{C})$. 
			Let $z(x)=\sum\limits_{\kappa}^{}(z_{0\kappa}x^2+z_{1\kappa}x+z_{2\kappa})(x^3-\alpha_0)^{\kappa}$ where $z_{0\kappa},  z_{1\kappa}, z_{2\kappa} \in \mathbb{F}_{p^m}$ and $z_{00}x^2+z_{10}x+z_{20} \neq 0$. Here $\kappa \leq\Im-t-1=p^s-t-1$.
			\begin{align*}
				\phi_6(x)&=(x^3-\alpha_0)^{p^s}-2\alpha_2 u-uz(x)\\
				&=(x^3-\alpha_0)^{p^s}-2\alpha_2 u-u\sum\limits_{\kappa=0}^{p^s-t-1}(z_{0\kappa}x^2+z_{1\kappa}x+z_{2\kappa})(x^3-\alpha_0)^{\kappa}.
			\end{align*}
			Then $ \mathcal{A}(\mathcal{C})^{*}=\langle \phi^*_6(x) ,  (x^3-\alpha_0^{-1})^{2p^s-\ell} \rangle$,  where $\phi^*_6(x)=(-\alpha_0)^{p^s}(x^3-\alpha_0^{-1})^{p^s}-2\alpha_2 u x^{3p^s}- u\sum \limits_{\kappa=0}^{p^s-t-1}(z_{2\kappa}x^2+z_{1\kappa}x+z_{0\kappa})(-\alpha_0)^{\kappa}(x^3-\alpha_0^{-1})^{\kappa} x^{3p^s-2\kappa-1} $.
		\end{enumerate}
	\end{proof}
	\begin{theorem}
		If $\mathcal{C}=\langle (x^3-\alpha_0)^\ell +u(x^3-\alpha_0)^t z(x),  u (x^3-\alpha_0)^{\mu} \rangle $,  where $0\leq  \ell \leq 2p^s-1$,  $0\leq  t < \ell  $ and $z(x)$ is 0 or $z(x)$ is an invertible, then  the dual code $\mathcal{C}^\perp $ associate to the ideal $\mathcal{A}(\mathcal{C})^*$ is determined as follows:
		\begin{enumerate}
			\item If $z(x)$ is 0 and $1\leq \ell\leq p^s $,  then $ \mathcal{A}(\mathcal{C})^{*} = \langle \psi_1^*(x), u(x^3-\alpha_0^{-1})^{2p^s-\ell }  \rangle$,  where $ \psi_1^*(x)= (-\alpha_0)^{2p^s-\mu}(x^3-\alpha_0^{-1})^{2p^s-\mu}-2\alpha_2 u (-\alpha_0)^{p^s-\mu}(x^3-\alpha_0^{-1})^{p^s-\mu}x^{3p^s} $.
			\item If $z(x)$ is an invertible and $\ell \neq p^s+t$ then $ \mathcal{A}(\mathcal{C})^{*} = \langle \psi_2^*(x), u(x^3-\alpha_0^{-1})^{2p^s-\ell }  \rangle$,  where $ \psi_2^*(x)= (-\alpha_0)^{2p^s-\mu}(x^3-\alpha_0^{-1})^{2p^s-\mu}-2\alpha_2 u (-\alpha_0)^{p^s-\mu}(x^3-\alpha_0^{-1})^{p^s-\mu}x^{3p^s}-u(-\alpha_0)^{2p^s+t-\ell-\mu}(x^3-\alpha_0^{-1})^{2p^s+t-\ell-\mu}\\
		\sum\limits_{\kappa=0}^{\mu-t-1}(z_{2\kappa}x^2+z_{1\kappa}x+z_{0\kappa})(-\alpha_0)^{\kappa}(x^3-\alpha_0^{-1})^{\kappa} x^{3\ell-3t-3\kappa-2} $.
			\item If $z(x)$ is an invertible and $\ell=p^s+t$ then $ \mathcal{A}(\mathcal{C})^{*} = \langle \psi_4^*(x), u(x^3-\alpha_0^{-1})^{2p^s-\ell }  \rangle$,  where $ \psi_4^*(x)= (-\alpha_0)^{2p^s-\mu}(x^3-\alpha_0^{-1})^{2p^s-\mu}-2\alpha_2 u (-\alpha_0)^{p^s-\mu}(x^3-\alpha_0^{-1})^{p^s-\mu}x^{3p^s}-u(-\alpha_0)^{2p^s+t-\ell-\mu}(x^3-\alpha_0^{-1})^{2p^s+t-\ell-\mu} \\ \sum\limits_{\kappa=0}^{\mu-t-1}(z_{2\kappa}x^2+z_{1\kappa}x+z_{0\kappa})(-\alpha_0)^{\kappa}(x^3-\alpha_0^{-1})^{\kappa} x^{3\ell-3t-3\kappa-2} $.

		\end{enumerate}  
	\end{theorem}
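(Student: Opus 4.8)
The plan is to run the same three-step scheme used for Theorem~\ref{8.7}: produce an explicit generator of $\mathcal{A}(\mathcal{C})$, pin it down by a cardinality count, and then apply the reciprocal operator (using Lemma~\ref{2.6}, the identity $[(x^3-\alpha_0)^{k}]^{*}=(-\alpha_0)^{k}(x^3-\alpha_0^{-1})^{k}$, the preceding lemma that computes $\pi^{*}(x)$, and the fact that $\langle g_1,g_2\rangle^{*}=\langle g_1^{*},g_2^{*}\rangle$). Throughout I would invoke Lemma~\ref{5.2}: $2p^s-\ell$ is the least positive $\varepsilon$ with $u(x^3-\alpha_0)^{\varepsilon}\in\mathcal{A}(\mathcal{C})$, which is what forces $u(x^3-\alpha_0^{-1})^{2p^s-\ell}$ to be the second generator of $\mathcal{A}(\mathcal{C})^{*}$ in every case.

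First I would write down the candidate generator of $\mathcal{A}(\mathcal{C})$: in case~(1), $\psi_1(x)=(x^3-\alpha_0)^{2p^s-\mu}-2\alpha_2u(x^3-\alpha_0)^{p^s-\mu}$; in case~(2), $\psi_2(x)=(x^3-\alpha_0)^{2p^s-\mu}-2\alpha_2u(x^3-\alpha_0)^{p^s-\mu}-u(x^3-\alpha_0)^{2p^s+t-\ell-\mu}z(x)$; and in case~(3) the same $\psi_2(x)$, which simplifies because then $2p^s+t-\ell-\mu=p^s-\mu$. Since $\mu<\Im\le p^s$ (resp.\ $\mu<2p^s+t-\ell$ in case~(2)), all occurring exponents are non-negative. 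Next I would verify $\psi\in\mathcal{A}(\mathcal{C})$ by multiplying it against the two generators $(x^3-\alpha_0)^{\ell}+u(x^3-\alpha_0)^{t}z(x)$ and $u(x^3-\alpha_0)^{\mu}$ of $\mathcal{C}$ and confirming each product is $0$; the only ingredients needed are $u^{2}=0$ and the relation $(x^3-\alpha_0)^{2p^s}=2\alpha_2u(x^3-\alpha_0)^{p^s}$ of Lemma~\ref{4.4}. Indeed $(x^3-\alpha_0)^{\mu}\cdot(x^3-\alpha_0)^{2p^s-\mu}=2\alpha_2u(x^3-\alpha_0)^{p^s}$, which is precisely the obstruction the correction term $-2\alpha_2u(x^3-\alpha_0)^{p^s-\mu}$ is designed to kill, and when $z(x)\ne0$ the two copies of $u(x^3-\alpha_0)^{2p^s-\mu+t}z(x)$ produced by the summands of $\psi$ cancel. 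This gives $\langle\psi(x),u(x^3-\alpha_0)^{2p^s-\ell}\rangle\subseteq\mathcal{A}(\mathcal{C})$.

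Then I would compute the size of $\langle\psi(x),u(x^3-\alpha_0)^{2p^s-\ell}\rangle$ via residue and torsion: reducing $\psi$ modulo $u$ gives $Res=\langle(x^3-\alpha_0)^{2p^s-\mu}\rangle$ ($p^{3m\mu}$ codewords), while writing a general $f$ with $uf\in$ the ideal, reducing modulo $u$, and using that $(x^3-\alpha_0)^{\mu}\psi\in\langle u\rangle$ gives $Tor=\langle(x^3-\alpha_0)^{2p^s-\ell}\rangle$ ($p^{3m\ell}$ codewords), so the ideal has $p^{3m(\ell+\mu)}$ elements. On the other hand $|\mathcal{A}(\mathcal{C})|=|\mathcal{C}^{\perp}|=p^{12mp^s}/|\mathcal{C}|=p^{12mp^s}/p^{3m(4p^s-\ell-\mu)}=p^{3m(\ell+\mu)}$ by the Type~D codeword count of the preceding theorem, so the containment of the previous step is an equality. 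Finally I would apply $*$ to each generator; the constant $-2\alpha_2$, sitting in the ``$c$''-slot, is multiplied by $x^{2}\cdot x^{3j-3i-2}=x^{3p^s}$ (with $j=2p^s-\mu$, $i=p^s-\mu$) when reciprocated, the head $(x^3-\alpha_0)^{2p^s-\mu}$ contributes $(-\alpha_0)^{2p^s-\mu}(x^3-\alpha_0^{-1})^{2p^s-\mu}$, the $z(x)$ part unpacks (using $\deg z(x)\le\mu-t-1$) into the displayed sum over $\kappa$, and stripping the unit $(-\alpha_0)^{2p^s-\ell}$ off the second generator leaves $u(x^3-\alpha_0^{-1})^{2p^s-\ell}$; in case~(3) the coincidence $2p^s+t-\ell-\mu=p^s-\mu$ makes $\psi_4^{*}$ formally the same expression as $\psi_2^{*}$.

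The step I expect to be the main obstacle is the cardinality computation in case~(2) when $\ell>p^s+t$: there the lowest-order $(x^3-\alpha_0)$-term in the $u$-part of $\psi_2$ is $-u(x^3-\alpha_0)^{2p^s+t-\ell-\mu}z(x)$ rather than $-2\alpha_2u(x^3-\alpha_0)^{p^s-\mu}$, so I would have to argue carefully---using only $t<\ell\le 2p^s-1$ and $\mu<\Im$---that the torsion is still exactly $\langle(x^3-\alpha_0)^{2p^s-\ell}\rangle$ and not larger, i.e.\ that $\langle\psi_2,u(x^3-\alpha_0)^{2p^s-\ell}\rangle$ genuinely has the predicted size $p^{3m(\ell+\mu)}$; everything else reduces to exponent bookkeeping governed by $\mu<p^s$ and Lemma~\ref{4.4}. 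The degenerate subcase $\mu=0$, where $\psi$ vanishes and $\mathcal{A}(\mathcal{C})=\langle u(x^3-\alpha_0)^{2p^s-\ell}\rangle$, should be recorded separately.
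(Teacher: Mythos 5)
Your proposal follows essentially the same route as the paper's proof: exhibit the explicit annihilator elements $\psi_i(x)=(x^3-\alpha_0)^{2p^s-\mu}-2\alpha_2u(x^3-\alpha_0)^{p^s-\mu}-u(x^3-\alpha_0)^{2p^s+t-\ell-\mu}z(x)$ via Lemma \ref{4.4}, deduce $\langle\psi_i(x),u(x^3-\alpha_0)^{2p^s-\ell}\rangle\subseteq\mathcal{A}(\mathcal{C})$, force equality by the cardinality count $p^{3m(\ell+\mu)}=p^{12mp^s}/|\mathcal{C}|$, and then apply the reciprocal operator. The only cosmetic difference is that the paper splits case (2) into the subcases $\ell<p^s+t$ and $\ell>p^s+t$ with differently factored (but identical after expansion) generators $\psi_2,\psi_3$, and the torsion concern you flag is not actually needed since the upper bound on the ideal's size already comes from $|\mathcal{A}(\mathcal{C})|$.
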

	\begin{proof}
		\begin{enumerate}
			\item If $z(x)$ is 0 and $1\leq \ell\leq p^s $,  then $\mathcal{C}=\langle (x^3-\alpha_0)^\ell, u (x^3-\alpha_0)^{\mu}\rangle $. 
            By Lemma \ref{4.4}, $\psi_1(x)=(x^3-\alpha_0)^{2p^s-\mu}-2\alpha_2 u(x^3-\alpha_0)^{p^s-\mu} \in \mathcal{A}(\mathcal{C})$. Therefore, $\langle \psi_1(x),  u(x^3-\alpha_0)^{2p^s-\ell} \rangle \subseteq \mathcal{A}(\mathcal{C})$. Furthermore, 
			\begin{equation*}
				p^{3m(\ell+\mu)}=\vert \langle \psi_1(x), u(x^3-\alpha_0)^{2p^s-\ell}  \rangle \vert\leq \vert \mathcal{A}(\mathcal{C}) \vert =\vert \mathcal{A}(\mathcal{C})^{*} \vert =\vert \mathcal{C}^{\perp}\vert=\frac{p^{12mp^s}}{\vert \mathcal{C} \vert}=\frac{p^{12mp^s}}{p^{3m(4p^s-\ell-\mu)}}=p^{3m(\ell+\mu)}.
			\end{equation*}
			Thus, $\langle \psi_1(x), u(x^3-\alpha_0)^{2p^s-\ell}  \rangle=\mathcal{A}(\mathcal{C})$ and  $ \mathcal{A}(\mathcal{C})^{*} = \langle \psi_1^*(x), u(x^3-\alpha_0^{-1})^{2p^s-\ell }  \rangle$,  where $ \psi_1^*(x)= (-\alpha_0)^{2p^s-\mu}(x^3-\alpha_0^{-1})^{2p^s-\mu}-2\alpha_2 u (-\alpha_0)^{p^s-\mu}(x^3-\alpha_0^{-1})^{p^s-\mu}x^{3p^s} $.
			\item If $z(x)$ is an invertible and $\ell \neq p^s+t$, then $\mathcal{C}=\langle (x^3-\alpha_0)^\ell +u(x^3-\alpha_0)^t z(x),  u (x^3-\alpha_0)^{\mu} \rangle $.
			\begin{itemize}
				\item Let $1\leq \ell< p^s+t$. 
            By Lemma \ref{4.4},  $\psi_2(x)=(x^3-\alpha_0)^{2p^s-\mu}+u(x^3-\alpha_0)^{p^s-\mu}[-2\alpha_2-(x^3-\alpha_0)^{p^s-\ell+t}z(x) ]\in \mathcal{A}(\mathcal{C})$. Therefore, $\langle \psi_2(x),  u(x^3-\alpha_0)^{2p^s-\ell} \rangle \subseteq \mathcal{A}(\mathcal{C})$. Furthermore, 
				\begin{equation*}
					p^{3m(\ell+\mu)}=\vert \langle \psi_2(x), u(x^3-\alpha_0)^{2p^s-\ell}  \rangle \vert\leq \vert \mathcal{A}(\mathcal{C}) \vert =\vert \mathcal{A}(\mathcal{C})^{*} \vert =\vert \mathcal{C}^{\perp}\vert=\frac{p^{12mp^s}}{\vert \mathcal{C} \vert}=\frac{p^{12mp^s}}{p^{3m(4p^s-\ell-\mu)}}=p^{3m(\ell+\mu)}.
				\end{equation*}
				Thus, $\langle \psi_2(x), u(x^3-\alpha_0)^{2p^s-\ell}  \rangle=\mathcal{A}(\mathcal{C})$.
				Let $z(x)=\sum\limits_{\kappa}^{}(z_{0\kappa}x^2+z_{1\kappa}x+z_{2\kappa})(x^3-\alpha_0)^{\kappa}$, where $z_{0\kappa},  z_{1\kappa}, z_{2\kappa} \in \mathbb{F}_{p^m}$ and $z_{00}x^2+z_{10}x+z_{20} \neq 0$. Here $\kappa \leq\mu-t-1$. Then 
				\begin{align*}
					\psi_2(x)&=(x^3-\alpha_0)^{2p^s-\mu}+u(x^3-\alpha_0)^{p^s-\mu}[-2\alpha_2-(x^3-\alpha_0)^{p^s-\ell+t}z(x) ]\\
					&=(x^3-\alpha_0)^{2p^s-\mu}-2\alpha_2 u(x^3-\alpha_0)^{p^s-\mu}-u(x^3-\alpha_0)^{2p^s-\ell-\mu+t}\sum\limits_{\kappa=0}^{\mu-t-1}(z_{0\kappa}x^2+z_{1\kappa}x+z_{2\kappa})(x^3-\alpha_0)^{\kappa}.
				\end{align*}
				Thus,  $ \mathcal{A}(\mathcal{C})^{*} = \langle \psi_2^*(x), u(x^3-\alpha_0^{-1})^{2p^s-\ell }  \rangle$,  where $ \psi_2^*(x)= (-\alpha_0)^{2p^s-\mu}(x^3-\alpha_0^{-1})^{2p^s-\mu}-2\alpha_2 u (-\alpha_0)^{p^s-\mu}(x^3-\alpha_0^{-1})^{p^s-\mu}x^{3p^s}-u(-\alpha_0)^{2p^s+t-\ell-\mu}(x^3-\alpha_0^{-1})^{2p^s+t-\ell-\mu}
				\sum\limits_{\kappa=0}^{\mu-t-1}(z_{2\kappa}x^2+z_{1\kappa}x+z_{0\kappa})(-\alpha_0)^{\kappa}(x^3-\alpha_0^{-1})^{\kappa} x^{3\ell-3t-3\kappa-2} $.
				\item Let $p^s+t< \ell \leq2p^s-1 $. 
            By Lemma \ref{4.4},  $\psi_3(x)=(x^3-\alpha_0)^{2p^s-\mu}+u(x^3-\alpha_0)^{2p^s+t-\ell-\mu}[-2\alpha_2(x^3-\alpha_0)^{\ell-p^s-t}-z(x) ]\in \mathcal{A}(\mathcal{C})$. Therefore, $\langle \psi_3(x),  u(x^3-\alpha_0)^{2p^s-\ell} \rangle \subseteq \mathcal{A}(\mathcal{C})$. Furthermore, 
				\begin{equation*}
					p^{3m(\ell+\mu)}=\vert \langle \psi_3(x), u(x^3-\alpha_0)^{2p^s-\ell}  \rangle \vert\leq \vert \mathcal{A}(\mathcal{C}) \vert =\vert \mathcal{A}(\mathcal{C})^{*} \vert =\vert \mathcal{C}^{\perp}\vert=\frac{p^{12mp^s}}{\vert \mathcal{C} \vert}=\frac{p^{12mp^s}}{p^{3m(4p^s-\ell-\mu)}}=p^{3m(\ell+\mu)}.
				\end{equation*}
				Thus, $\langle \psi_3(x), u(x^3-\alpha_0)^{2p^s-\ell}  \rangle=\mathcal{A}(\mathcal{C})$. Let $z(x)=\sum\limits_{\kappa}^{}(z_{0\kappa}x^2+z_{1\kappa}x+z_{2\kappa})(x^3-\alpha_0)^{\kappa}$, where $z_{0\kappa},  z_{1\kappa}, z_{2\kappa} \in \mathbb{F}_{p^m}$ and $z_{00}x^2+z_{10}x+z_{20} \neq 0$. Here $\kappa \leq\mu-t-1$. Then 
				\begin{align*}
					\psi_3(x)&=(x^3-\alpha_0)^{2p^s-\mu}+u(x^3-\alpha_0)^{2p^s+t-\ell-\mu}-[2\alpha_2(x^3-\alpha_0)^{\ell-p^s-t}-z(x)]\\
					&=(x^3-\alpha_0)^{2p^s-\mu}-2\alpha_2 u(x^3-\alpha_0)^{p^s-\mu}-u(x^3-\alpha_0)^{2p^s+t-\ell-\mu}\sum\limits_{\kappa=0}^{\mu-t-1}(z_{0\kappa}x^2+z_{1\kappa}x+z_{2\kappa})(x^3-\alpha_0)^{\kappa}.
				\end{align*}
				Thus,  $ \mathcal{A}(\mathcal{C})^{*} = \langle \psi_3^*(x), u(x^3-\alpha_0^{-1})^{2p^s-\ell }  \rangle$,  where $ \psi_3^*(x)= (-\alpha_0)^{2p^s-\mu}(x^3-\alpha_0^{-1})^{2p^s-\mu}-2\alpha_2 u(-\alpha_0)^{p^s-\mu}(x^3-\alpha_0^{-1})^{p^s-\mu}x^{3p^s}-u(-\alpha_0)^{2p^s+t-\ell-\mu}(x^3-\alpha_0^{-1})^{2p^s+t-\ell-\mu}
				\sum\limits_{\kappa=0}^{\mu-t-1}(z_{2\kappa}x^2+z_{1\kappa}x+z_{0\kappa})(-\alpha_0)^{\kappa}(x^3-\alpha_0^{-1})^{\kappa} x^{3\ell-3t-3\kappa-2} $.
			\end{itemize}
			\item Let $z(x)$ be an invertible and $\ell=p^s+t$. 
   By Lemma \ref{4.4}, $\psi_4(x)=(x^3-\alpha_0)^{2p^s-\mu}+u(x^3-\alpha_0)^{p^s-\mu}[-2\alpha_2-(x^3-\alpha_0)^{p^s-\ell+t}z(x) ]\in \mathcal{A}(\mathcal{C})$. Therefore, $\langle \psi_3(x),  u(x^3-\alpha_0)^{2p^s-\ell} \rangle \subseteq \mathcal{A}(\mathcal{C})$. Furthermore, 
			\begin{equation*}
				p^{3m(\ell+\mu)}=\vert \langle \psi_4(x), u(x^3-\alpha_0)^{2p^s-\ell}  \rangle \vert\leq \vert \mathcal{A}(\mathcal{C}) \vert =\vert \mathcal{A}(\mathcal{C})^{*} \vert =\vert \mathcal{C}^{\perp}\vert=\frac{p^{12mp^s}}{\vert \mathcal{C} \vert}=\frac{p^{12mp^s}}{p^{3m(4p^s-\ell-\mu)}}=p^{3m(\ell+\mu)}.
			\end{equation*}
			Thus, $\langle \psi_4(x), u(x^3-\alpha_0)^{2p^s-\ell} \rangle=\mathcal{A}(\mathcal{C})$. 
			Let $z(x)=\sum\limits_{\kappa}^{}(z_{0\kappa}x^2+z_{1\kappa}x+z_{2\kappa})(x^3-\alpha_0)^{\kappa}$, where $z_{0\kappa},  z_{1\kappa}, z_{2\kappa} \in \mathbb{F}_{p^m}$ and $z_{00}x^2+z_{10}x+z_{20} \neq 0$. Here $\kappa \leq\mu-t-1$. Then 
			\begin{align*}
				\psi_4(x)&=(x^3-\alpha_0)^{2p^s-\mu}+u(x^3-\alpha_0)^{p^s-\mu}[-2\alpha_2-(x^3-\alpha_0)^{p^s-\ell+t}z(x) ]\\
				&=(x^3-\alpha_0)^{2p^s-\mu}-2\alpha_2 u(x^3-\alpha_0)^{p^s-\mu}-u(x^3-\alpha_0)^{2p^s-\ell-\mu+t}\sum\limits_{\kappa=0}^{\mu-t-1}(z_{0\kappa}x^2+z_{1\kappa}x+z_{2\kappa})(x^3-\alpha_0)^{\kappa}.
			\end{align*}
			Then $ \mathcal{A}(\mathcal{C})^{*} = \langle \psi_4^*(x), u(x^3-\alpha_0^{-1})^{2p^s-\ell }  \rangle$,  where $ \psi_4^*(x)= (-\alpha_0)^{2p^s-\mu}(x^3-\alpha_0^{-1})^{2p^s-\mu}-2\alpha_2 u (-\alpha_0)^{p^s-\mu}(x^3-\alpha_0^{-1})^{p^s-\mu}x^{3p^s}-u(-\alpha_0)^{2p^s+t-\ell-\mu}(x^3-\alpha_0^{-1})^{2p^s+t-\ell-\mu}
			\sum\limits_{\kappa=0}^{\mu-t-1}(z_{2\kappa}x^2+z_{1\kappa}x+z_{0\kappa})(-\alpha_0)^{\kappa}(x^3-\alpha_0^{-1})^{\kappa} x^{3\ell-3t-3\kappa-2} $.
		\end{enumerate}
	\end{proof}

\section{  $\alpha=\alpha_1 +\alpha_4 uv $ is non-cube in $\mathcal{R}$}

Let  $\alpha=\alpha_1 +\alpha_4 uv $ be a non-cube in $\mathcal{R}$ where $\alpha_1, \alpha_4 \in \mathbb{F}^*_{p^m}$. Since $\alpha_1 \neq 0$, $\alpha_1 + \alpha_4 uv $ is a unit in $\mathcal{R}$ and $(\alpha_1 + \alpha_4 uv)$-constacyclic codes of length $3p^s$ over $\mathcal{R}$ are ideals of the quotient ring $R_{\alpha_1, \alpha_4} =\frac{\mathcal{R}[x]}{\langle x^{3p^s}-(\alpha_1 + \alpha_4 uv) \rangle}$.

\begin{proposition}\label{9.1}
        $\alpha=\alpha_1 +\alpha_4 uv$ is non-cube in $\mathcal{R}$ if and only if  $\alpha_1$ is non-cube in $\mathbb{F}_{p^m}$.
    \end{proposition}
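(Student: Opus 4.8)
The plan is to mimic the argument used for Propositions \ref{5} and \ref{2}, namely to expand a generic cube in $\mathcal{R}$ and compare coefficients, exploiting that $3$ is a unit in $\mathbb{F}_{p^m}$ since $p\neq 3$.

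First I would prove the ``only if'' direction by contraposition: suppose $\alpha_1$ is a cube in $\mathbb{F}_{p^m}$, say $\alpha_1=\alpha_1'^{\,3}$ with $\alpha_1'\in\mathbb{F}_{p^m}$. Set $\alpha_2'=\alpha_3'=0$ and $\alpha_4'=3^{-1}\alpha_1'^{\,-2}\alpha_4$ (legitimate because $3$ and $\alpha_1'$ are units). Using $u^2=v^2=0$ and $uv=vu$, the expansion
\[
(\alpha_1'+\alpha_2'u+\alpha_3'v+\alpha_4'uv)^3=\alpha_1'^{\,3}+(3\alpha_1'^{\,2}\alpha_2')u+(3\alpha_1'^{\,2}\alpha_3')v+(3\alpha_1'^{\,2}\alpha_4'+6\alpha_1'\alpha_2'\alpha_3')uv
\]
collapses, with this choice, to $\alpha_1'^{\,3}+3\alpha_1'^{\,2}\alpha_4'\,uv=\alpha_1+\alpha_4 uv=\alpha$. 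Hence $\alpha$ is a cube in $\mathcal{R}$, contradicting the hypothesis; therefore $\alpha$ non-cube forces $\alpha_1$ non-cube.

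For the converse, again by contraposition, suppose $\alpha=\alpha_1+\alpha_4 uv$ is a cube in $\mathcal{R}$, so $\alpha=(\alpha_1'+\alpha_2'u+\alpha_3'v+\alpha_4'uv)^3$ for some $\alpha_1',\alpha_2',\alpha_3',\alpha_4'\in\mathbb{F}_{p^m}$. Expanding as above and comparing the $\mathbb{F}_{p^m}$-component (the coefficient of $1$) immediately gives $\alpha_1=\alpha_1'^{\,3}$, so $\alpha_1$ is a cube in $\mathbb{F}_{p^m}$. Combining the two implications yields the stated equivalence.

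I do not anticipate a genuine obstacle here; the argument is essentially computational. The only points requiring a small amount of care are that the binomial/multinomial expansion of the cube must be carried out using the relations $u^2=v^2=0$, $uv=vu$ (so all terms of ``degree'' $\ge 2$ in $u,v$ beyond $uv$ vanish), and that the inversion $\alpha_4'=3^{-1}\alpha_1'^{\,-2}\alpha_4$ is valid precisely because $p\neq 3$ guarantees $3\in\mathbb{F}_{p^m}^{*}$; this is exactly the place where the standing hypothesis $p\neq 3$ is used.
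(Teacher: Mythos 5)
Your proposal is correct and follows essentially the same route as the paper's own proof: both directions are handled by exhibiting an explicit cube root $(\alpha_1'+3^{-1}\alpha_1'^{-2}\alpha_4\,uv)$ when $\alpha_1=\alpha_1'^{\,3}$, and by expanding a generic cube and comparing the $\mathbb{F}_{p^m}$-component for the converse, with $p\neq 3$ ensuring $3$ is invertible. No gaps.
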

   
       \begin{proof}
        Suppose $\alpha=\alpha_1 + \alpha_4 uv$ is non-cube in $\mathcal{R}$ and assume $\alpha_1= \alpha_1^{\prime^{3}}\in \mathbb{F}_{p^m}$. Then $\alpha_1 + \alpha_2 u + \alpha_3 v + \alpha_4 uv=(\alpha_1^{\prime} +\alpha_2^{\prime}u+ \alpha_3^{\prime} v + \alpha_4^{\prime} uv)^3$, where $\alpha_2^{\prime}=0,  \alpha_3^{\prime}=0$ and $\alpha_4^{\prime}=3^{-1} \alpha_1^{\prime^{-2}}\alpha_4$,  a contradiction. 
        
        Conversely, 
        assume that $\alpha$ is cube in $\mathcal{R}$. Then there exists $\alpha_1^{\prime} +\alpha_2^{\prime}u+ \alpha_3^{\prime} v + \alpha_4^{\prime} uv$, where $\alpha_1^{\prime}, \alpha_2^{\prime}, \alpha_3^{\prime}, \alpha_4^{\prime} \in \mathbb{F}_{p^m}$, such that 
        \begin{align*}
            \alpha=&(\alpha_1^{\prime} +\alpha_2^{\prime}u+ \alpha_3^{\prime} v + \alpha_4^{\prime} uv)^3\\
            =&\alpha_1^{\prime^{3}}+(3\alpha_1^{\prime^{2}}\alpha_2^{\prime})u+(3\alpha_1^{\prime^{2}}\alpha_3^{\prime})v+(3\alpha_1^{\prime^{2}}\alpha_4^{\prime}+6\alpha_1^{\prime}\alpha_2^{\prime}\alpha_3^{\prime})uv.
        \end{align*}
        Thus, $\alpha_1 + \alpha_4 uv=\alpha_1^{\prime^{3}}+(3\alpha_1^{\prime^{2}}\alpha_2^{\prime})u+(3\alpha_1^{\prime^{2}}\alpha_3^{\prime})v+(3\alpha_1^{\prime^{2}}\alpha_4^{\prime}+6\alpha_1^{\prime}\alpha_2^{\prime}\alpha_3^{\prime})uv$. By comparing coefficients, we have $\alpha_1= \alpha_1^{\prime^{3}} \in \mathbb{F}_{p^m}$.
    \end{proof}
\begin{remark}
   We have $\alpha_1={\alpha_0}^{p^{s}}$. By Lemma \ref{9.1}, $\alpha$ is non-cube in $\mathcal{R}$ if and only if $\alpha_0$ is non-cube in $\mathbb{F}_{p^m}$.
\end{remark}
\begin{proposition}\label{9.2}
    Any non zero linear polynomial $c_1x^2+c_2x+c_3 \in \mathbb{F}_{p^m}[x]$ is a unit in $R_{\alpha_1, \alpha_4}$.
\end{proposition}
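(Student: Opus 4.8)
The plan is to mirror the two analogous propositions already established for $R_{\alpha_1,\alpha_3,\alpha_4}$ and $R_{\alpha_1,\alpha_2,\alpha_3,\alpha_4}$, splitting a nonzero $g(x)=c_1x^2+c_2x+c_3\in\mathbb{F}_{p^m}[x]$ into the three cases $\deg g(x)=0,1,2$. If $\deg g(x)=0$ then $g(x)=c_3\in\mathbb{F}_{p^m}^{*}$ is trivially a unit.

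For $\deg g(x)=1$, say $g(x)=c_2x+c_3$ with $c_2\neq 0$, the workhorse is the identity in $R_{\alpha_1,\alpha_4}$
\begin{equation*}
  (x-c_3)^{p^s}(x^2+c_3x+c_3^2)^{p^s}=(x^3-c_3^3)^{p^s}=x^{3p^s}-c_3^{3p^s}=(\alpha_1-c_3^{3p^s})+\alpha_4 uv,
\end{equation*}
which uses $p\mid\binom{p^s}{i}$ for $1\le i\le p^s-1$ and the defining relation $x^{3p^s}=\alpha_1+\alpha_4 uv$. Since $\alpha_1$ is non-cube in $\mathbb{F}_{p^m}$ (Proposition \ref{9.1} together with $\alpha_1=\alpha_0^{p^s}$), $\alpha_1-c_3^{3p^s}\neq 0$, so $(\alpha_1-c_3^{3p^s})+\alpha_4 uv$ is a unit of $\mathcal{R}$ (its $\mathbb{F}_{p^m}$-component is nonzero), hence a unit of $R_{\alpha_1,\alpha_4}$. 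This exhibits $x-c_3$ as a unit with explicit inverse $(x-c_3)^{p^s-1}(x^2+c_3x+c_3^2)^{p^s}\big[(\alpha_1-c_3^{3p^s})+\alpha_4 uv\big]^{-1}$; writing $c_2x+c_3=c_2\big(x-(-c_2^{-1}c_3)\big)$ then shows $g(x)$ is a unit.

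For $\deg g(x)=2$, $g(x)=c_1x^2+c_2x+c_3$ with $c_1\neq 0$, I would carry out the same chain of manipulations as in Sections 4 and 5: factor out $c_1$, write $(x^2+c_1^{-1}c_2x+c_1^{-1}c_3)^{-1}$ as its $(p^s-1)$-st power times its $(-p^s)$-th power, multiply and divide by $(x-c_1^{-1}c_2)^{p^s}$, apply $(a+b)^{p^s}=a^{p^s}+b^{p^s}$ in characteristic $p$, and reduce modulo $x^{3p^s}-(\alpha_1+\alpha_4 uv)$. This reduces the problem to the invertibility of $\beta(x)^{p^s}+\alpha_4 uv$ in $R_{\alpha_1,\alpha_4}$, where $\beta(x)=\alpha_0-c_1^{-2}c_2c_3+(c_1^{-1}c_3-c_1^{-2}c_2^2)x\in\mathbb{F}_{p^m}[x]$. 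Since $uv$ is nilpotent, this is a unit if and only if $\beta(x)$ is; if $\beta(x)\neq 0$ it is a nonzero polynomial of degree $\le 1$, hence a unit by the two cases above, and if $\beta(x)=0$ then both of its coefficients vanish, forcing $\alpha_0=(c_1^{-1}c_2)^3$, contradicting that $\alpha_0$ is non-cube. Thus $g(x)$ is a unit in every case.

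The main obstacle is the bookkeeping in the degree-$2$ computation: keeping the exponents and the Frobenius collapse $(\cdot)^{p^s}$ exact so that reduction modulo $x^{3p^s}-(\alpha_1+\alpha_4 uv)$ cleanly isolates the obstruction polynomial $\beta(x)$, and being careful to dispatch the degree-$0$ and degree-$1$ cases first so that invoking them inside the degree-$2$ case is not circular. One also uses $p\neq 3$, so that $3$, $c_1^{-1}$, and the like make sense. Everything else is routine and structurally identical to the proofs already given in Sections 4 and 5.
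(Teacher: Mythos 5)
Your proposal is correct and follows essentially the same route as the paper: the identity $(x-c)^{p^s}(x^2+cx+c^2)^{p^s}=x^{3p^s}-c^{3p^s}=(\alpha_1-c^{3p^s})+\alpha_4uv$ for the linear case, and the reduction of the quadratic case to the invertibility of $\alpha_0-c_1^{-2}c_2c_3+(c_1^{-1}c_3-c_1^{-2}c_2^2)x$, whose vanishing would force $\alpha_0=(c_1^{-1}c_2)^3$. Your version even cleans up a few typographical slips in the paper's computation (e.g.\ $(x^3-c_3^3)^{p^s}$ and the exponent $p^s-1$), so there is nothing to correct.
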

\begin{proof}
        Consider a non zero polynomial  $g(x)=c_1x^2+c_2x+c_3$ in $\mathbb{F}_{p^m}[x]$ i.e., $c_1,c_2,c_3 \in \mathbb{F}_{p^m}$ are not all zeros. 
        \begin{itemize}
            \item If $deg(g(x))=0$. i.e., $c_1=c_2=0, c_3\neq 0$. Thus, $g(x)=c_3$ is a unit.
            \item If $deg(g(x))=1$. i.e., $c_1=0, c_2\neq 0$ and $g(x)=c_2x+c_3$. 
            In $R_{\alpha_1, \alpha_4}$ we have 
            \begin{align*}
                (x-c_3)^{p^{s}}(x^2+c_3x+c_3^2)^{p^{s}}&=(x^3-c_3^2)^{p^{s}}\\
                &=x^{3p^{s}}-c_3^{2p^{s}}\\
                 &=\alpha_1+ \alpha_3 v + \alpha_4 uv -c_3^{3p^{s}}\\
                 &=(\alpha_1-c_3^{3p^{s}})+\alpha_4 uv.
            \end{align*} 
        Not that $\alpha_1-c_2^{3p^{s}}$ is unit in $\mathbb{F}_{p^m}$ as $\alpha_1$ is non-cube in $\mathbb{F}_{p^m}$. Hence, $(\alpha_1-c_2^{3p^{s}})+\alpha_4 uv$ is a unit in $\mathcal{R}$. Therefore, 
        \begin{center}
            $(x-c_3)^{-1}=(x-c_3)^{p^{s-1}}(x^2+c_3x+c_3^2)^{p^{s}}(\alpha_1-c_3^{3p^{s}}+\alpha_4 uv)^{-1}$.
        \end{center}
        For any $c_2\neq 0$ in $\mathbb{F}_{p^m}$,  we have 
        \begin{align*}
            (c_2x+c_3)^{-1}&=c_2^{-1}(x-c_2^{-1}(-c_3))^{-1}\\
            &=c_2^{-1}(x-c_2^{-1}(-c_3))^{p^{s-1}}(x^2+c_2^{-1}(-c_3)x+(c_2^{-1}(-c_3))^2)^{p^{s}}(\alpha_1-c_1^{-3p^{s}}c_2^{3p^{s}}+\alpha_4 uv)^{-1}.
        \end{align*}

        \item If $deg(g(x))=2$. i.e., $c_1\neq 0$ and $g(x)=c_1x^2+c_2x+c_3$. In $R_{\alpha_1, \alpha_4}$ we have 
    \begin{align*}
        g^{-1}(x)=&(c_1x^2+c_2x+c_3)^{-1}\\
        =&c^{-1}_1(x^2+c^{-1}_1c_2x+c^{-1}_1c_3)^{-1}\\
        =&c^{-1}_1(x^2+c^{-1}_1c_2x+c^{-1}_1c_3)^{p^s-1}(x^2+c^{-1}_1c_2x+c^{-1}_1c_3)^{-p^s}(x-c^{-1}_1c_2)^{-p^s}(x-c^{-1}_1c_2)^{p^s}\\
        =&c^{-1}_1(x^2+c^{-1}_1c_2x+c^{-1}_1c_3)^{p^s-1}(x-c^{-1}_1c_2)^{p^s}\big[(x^2+c^{-1}_1c_2x+c^{-1}_1c_3)(x-c^{-1}_1c_2)\big]^{-p^s}\\
        =&c^{-1}_1(x^2+c^{-1}_1c_2x+c^{-1}_1c_3)^{p^s-1}(x-c^{-1}_1c_2)^{p^s}\big[x^3+(c^{-1}_1c_3-c^{-2}_1c^{2}_2)x-c^{-2}_1c_2c_3\big]^{-p^s}\\
        =&c^{-1}_1(x^2+c^{-1}_1c_2x+c^{-1}_1c_3)^{p^s-1}(x-c^{-1}_1c_2)^{p^s}\big[x^{3p^s}+(c^{-1}_1c_3-c^{-2}_1c^{2}_2)^{p^s}x^{p^s}-(c^{-2}_1c_2c_3)^{p^s}\big]^{-1}\\
        =&c^{-1}_1(x^2+c^{-1}_1c_2x+c^{-1}_1c_3)^{p^s-1}(x-c^{-1}_1c_2)^{p^s} \big[ \alpha_1 + \alpha_4 uv+(c^{-1}_1c_3-c^{-2}_1c^{2}_2)^{p^s}x^{p^s}-(c^{-2}_1c_2c_3)^{p^s}\big]^{-1}\\
        =&c^{-1}_1(x^2+c^{-1}_1c_2x+c^{-1}_1c_3)^{p^s-1}(x-c^{-1}_1c_2)^{p^s}\big[ (\alpha_0-c^{-2}_1c_2c_3 +(c^{-1}_1c_3-c^{-2}_1c^{2}_2)x)^{p^s} +\alpha_4 uv\big]^{-1}
    \end{align*}
So, $g(x)$ is invertible if and only if $\alpha_0-c^{-2}_1c_2c_3 +(c^{-1}_1c_3-c^{-2}_1c^{2}_2)x$ is invertible. Suppose $\alpha_0-c^{-2}_1c_2c_3 +(c^{-1}_1c_3-c^{-2}_1c^{2}_2)x=0$. Then $\alpha_0-c^{-2}_1c_2c_3=0$ and  $c^{-1}_1c_3-c^{-2}_1c^{2}_2=0.$ This implies that $\alpha_0=(c_1^{-1}c_2)^{3}$, a contradiction as $\alpha_0$ is non-cube. 
 \end{itemize}
 Therefore, any non zero polynomial $c_1x^2+c_2x+c_3 \in \mathbb{F}_{p^m}[x]$ is a unit in $R_{\alpha_1, \alpha_4}$.
       
    \end{proof}
\begin{lemma}\label{9.3}
    In $\mathcal{R}$,  $\langle uv \rangle= \langle (x^3-\alpha_0)^{p^{s}} \rangle$, and $x^3-\alpha_0$ is a nilpotent element with nilpotency index $2p^s$.
\end{lemma}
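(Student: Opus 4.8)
The plan is to mirror the argument used for Lemma \ref{8.1}, adapting it to the ideal $\langle uv\rangle$ and to the defining relation $x^{3p^s} = \alpha_1 + \alpha_4 uv$ of the quotient ring $R_{\alpha_1,\alpha_4}$.

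First I would expand $(x^3-\alpha_0)^{p^s}$ using the fact that the characteristic is $p$ and $p \mid \binom{p^s}{i}$ for $1 \le i \le p^s - 1$, so that all cross terms vanish and $(x^3-\alpha_0)^{p^s} = x^{3p^s} - \alpha_0^{p^s}$. Since $\alpha_0^{p^s} = \alpha_1$ by the choice of $\alpha_0$ and $x^{3p^s} = \alpha_1 + \alpha_4 uv$ in $R_{\alpha_1,\alpha_4}$, this collapses to $(x^3-\alpha_0)^{p^s} = \alpha_4 uv$. Because $\alpha_4 \in \mathbb{F}_{p^m}^{*}$ is a unit of $R_{\alpha_1,\alpha_4}$, the two principal ideals coincide: $\langle (x^3-\alpha_0)^{p^s}\rangle = \langle \alpha_4 uv\rangle = \langle uv\rangle$.

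Next I would pin down the nilpotency index. Squaring the identity just obtained, $(x^3-\alpha_0)^{2p^s} = \alpha_4^2 (uv)^2 = \alpha_4^2 u^2 v^2 = 0$ since $u^2 = v^2 = 0$ in $\mathcal{R}$, so the index is at most $2p^s$. For the reverse inequality I would check $(x^3-\alpha_0)^{2p^s-1} \ne 0$: write it as $\alpha_4(x^3-\alpha_0)^{p^s-1}uv$, observe that $uv\cdot x^{3p^s} = uv(\alpha_1 + \alpha_4 uv) = \alpha_1 uv$ because $(uv)^2 = 0$, so multiplying by $uv$ effectively reduces exponents modulo the relation $x^{3p^s} = \alpha_1$; since $(x^3-\alpha_0)^{p^s-1}$ has degree $3p^s - 3 < 3p^s$, no reduction is needed and its product with $uv$ still contains the nonzero basis term $uv\,x^{3p^s-3}$ of the free $\mathcal{R}$-module $R_{\alpha_1,\alpha_4}$, hence is nonzero. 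Therefore the nilpotency index of $x^3-\alpha_0$ is exactly $2p^s$.

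There is no real obstacle here; the computation is entirely parallel to Lemma \ref{8.1}, and the only bookkeeping point is that $\alpha = \alpha_1 + \alpha_4 uv$ has no $u$- or $v$-linear term, so $(x^3-\alpha_0)^{p^s}$ lands directly on a unit multiple of $uv$ rather than of $v$.
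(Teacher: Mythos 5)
Your proposal is correct and follows essentially the same route as the paper: expand $(x^3-\alpha_0)^{p^s}$ via the Frobenius-type collapse to get $\alpha_4 uv$, use that $\alpha_4$ is a unit to identify the two ideals, and then square to kill the element since $(uv)^2=0$. You are in fact slightly more careful than the paper in justifying the lower bound, i.e.\ that $(x^3-\alpha_0)^{2p^s-1}=\alpha_4(x^3-\alpha_0)^{p^s-1}uv\neq 0$ because $(x^3-\alpha_0)^{p^s-1}$ has degree $3p^s-3<3p^s$ and hence survives as a nonzero $uv$-component; the paper simply asserts the index is $2p^s$.
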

\begin{proof}
    Let $x^3-\alpha_0 \in \mathcal{R}$. As $p$ is the characteristic and $p\vert\binom{p^s}{i}$,  for every $i$, $1\leq i\leq p^s-1$,  we have 
    \begin{align*}
        (x^3-\alpha_0)^{p^{s}}&=x^{3p^{s}}-\alpha_0^{p^{s}}+\sum_{i=1}^{p^s-1}(-1)^{p^{s}-i}\binom{p^s}{i}(x^{3})^{i}(\alpha_0)^{p^{s}-i}\\
        &=x^{3p^{s}}-\alpha_0^{p^{s}}\\
        &=x^{3p^{s}}-\alpha_1\\
        &=\alpha_4 uv.
    \end{align*}
    Since $\alpha_4 \neq 0$,  $\alpha_4$ is a unit in $R_{\alpha_1, \alpha_4}$.
    Thus, $\langle uv \rangle =\langle (x^3-\alpha_0)^{p^{s}} \rangle$. Hence,  the nilpotency index of $x^3-\alpha_0$ is $2p^s$.
\end{proof}

\begin{lemma}\label{9.4}
		The ring $R_{\alpha_1, \alpha_4}$ is a local ring with the unique maximal ideal  $\langle (x^3-\alpha_0), u, v \rangle $, but $R_{\alpha_1, \alpha_4}$ is not a chain ring.
	\end{lemma}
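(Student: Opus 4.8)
The plan is to follow the template of the local-ring lemma of the previous section, the essential new feature being that here $\alpha_3=0$, so $\alpha_4u$ is \emph{not} a unit and $v$ can no longer be absorbed into the other generators; consequently the maximal ideal genuinely requires all three of $x^3-\alpha_0$, $u$, $v$. First I would produce a normal form for a general $h(x)\in R_{\alpha_1,\alpha_4}$. Writing $h(x)=h_1(x)+uh_2(x)+vh_3(x)+uvh_4(x)$ with each $h_i$ of degree at most $3p^s-1$, and expanding each $h_i$ as $\sum_{\ell=0}^{p^s-1}r_{i\ell}(x)(x^3-\alpha_0)^\ell$ with $\deg r_{i\ell}\le 2$, I would invoke Lemma~\ref{9.3}, which gives $uv=\alpha_4^{-1}(x^3-\alpha_0)^{p^s}$. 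This lets $uvh_4(x)$ be merged into the ``pure'' part, pushing the exponents on $x^3-\alpha_0$ up to $2p^s-1$, while $u(x^3-\alpha_0)^{p^s}=\alpha_4u^2v=0$ and $v(x^3-\alpha_0)^{p^s}=0$ keep the $u$- and $v$-parts supported on exponents $\le p^s-1$. The outcome is that every $h(x)$ can be written as $\sum_{\ell=0}^{2p^s-1}(a_\ell x^2+b_\ell x+c_\ell)(x^3-\alpha_0)^\ell+u\sum_{\ell=0}^{p^s-1}(\cdots)(x^3-\alpha_0)^\ell+v\sum_{\ell=0}^{p^s-1}(\cdots)(x^3-\alpha_0)^\ell$, whose ``free'' coefficient is the quadratic $a_0x^2+b_0x+c_0=r_{10}(x)$.

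Next I would characterise the non-units. If $a_0x^2+b_0x+c_0\neq 0$, it is a unit in $R_{\alpha_1,\alpha_4}$ by Proposition~\ref{9.2}, and $h(x)-(a_0x^2+b_0x+c_0)$ lies in $\langle x^3-\alpha_0,u,v\rangle$; since $x^3-\alpha_0$, $u$, $v$ are commuting nilpotents (with $x^3-\alpha_0$ of nilpotency index $2p^s$ by Lemma~\ref{9.3}), this ideal is contained in the nilradical, so $h(x)$ is a unit plus a nilpotent, hence a unit. Conversely, if $a_0x^2+b_0x+c_0=0$ then $h(x)\in\langle x^3-\alpha_0,u,v\rangle$ is nilpotent and therefore not a unit, the ring being nonzero. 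Hence $\langle x^3-\alpha_0,u,v\rangle$ is precisely the set of non-units of $R_{\alpha_1,\alpha_4}$; being an ideal, it is the unique maximal ideal, so $R_{\alpha_1,\alpha_4}$ is local.

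For the final assertion, by Proposition~\ref{prop2.1} it suffices to show that the ideals of $R_{\alpha_1,\alpha_4}$ do not form a chain, and I would exhibit $\langle u\rangle$ and $\langle v\rangle$ as incomparable: reducing modulo $v$ yields $R_{\alpha_1,\alpha_4}/\langle v\rangle\cong \frac{(\mathbb{F}_{p^m}+u\mathbb{F}_{p^m})[x]}{\langle x^{3p^s}-\alpha_1\rangle}$, in which the image of $u$ is nonzero, so $u\notin\langle v\rangle$, and symmetrically $v\notin\langle u\rangle$; therefore $R_{\alpha_1,\alpha_4}$ is not a chain ring. (Equivalently, one may verify directly that the maximal ideal $\langle x^3-\alpha_0,u,v\rangle$ is non-principal, e.g. by examining $\mathfrak m/\mathfrak m^2$.) I do not expect a genuine obstacle: the only delicate points are the bookkeeping in the normal form — tracking exactly which powers of $x^3-\alpha_0$ are annihilated by $u$, $v$ and $uv$, which is what forces the third generator into the maximal ideal — and phrasing the non-chain step as the incomparability of two concrete ideals rather than the weaker (and by itself insufficient) remark that no one of the three listed generators divides another.
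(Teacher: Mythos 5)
Your proposal is correct. The first half — the normal form obtained by substituting $uv=\alpha_4^{-1}(x^3-\alpha_0)^{p^s}$ from Lemma \ref{9.3} into the expansion of $h(x)$, followed by the observation that $h(x)$ is a unit exactly when its free quadratic coefficient is nonzero (a unit plus a sum of commuting nilpotents) — is essentially identical to the paper's argument, and correctly identifies $\langle x^3-\alpha_0,u,v\rangle$ as the set of non-units. Where you genuinely diverge is the non-chain step. The paper shows the maximal ideal is not principal: it verifies by explicit computation that $u\notin\langle x^3-\alpha_0,v\rangle$ (an assumed expression for $u$ forces $x^3-\alpha_0$ to be invertible), that symmetrically $v\notin\langle x^3-\alpha_0,u\rangle$, and that $x^3-\alpha_0\notin\langle u,v\rangle$ by comparing nilpotency indices, then invokes Proposition \ref{prop2.1}. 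You instead exhibit $\langle u\rangle$ and $\langle v\rangle$ as incomparable, via the quotient isomorphisms $R_{\alpha_1,\alpha_4}/\langle v\rangle\cong\frac{(\mathbb{F}_{p^m}+u\mathbb{F}_{p^m})[x]}{\langle x^{3p^s}-\alpha_1\rangle}$ and its mirror image, which contradicts the definition of a chain ring directly. Your route is shorter and cleaner, and it sidesteps a subtlety the paper glosses over: showing that none of three listed generators lies in the ideal generated by the other two does not by itself prove the ideal is non-principal (one would still need a Nakayama-type argument on $\mathfrak m/\mathfrak m^2$, which is exactly the repair you mention parenthetically). The paper's route, on the other hand, yields the stronger structural fact that the maximal ideal requires all three generators, which is of independent interest for classifying the ideals of $R_{\alpha_1,\alpha_4}$.
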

	\begin{proof}
		Let $h(x)$ be a polynomial in $R_{\alpha_1, \alpha_4}$. It can be expressed as  $h(x)=h_1(x)+uh_2(x)+vh_3(x)+uvh_4(x)$,  where $h_i(x)$, $1\leq i\leq 4$,
		are polynomials of degree less than or equal to $3p^s-1$ over $\mathbb{F}_{p^m}$. Thus, $h(x)$ can be uniquely expressed 
		\begin{align*}
			h(x)=&\sum_{\ell=0}^{p^s-1}(a_{0\ell}x^{2}+b_{0\ell}x+c_{0\ell})(x^3-\alpha_0)^\ell+u\sum_{\ell=0}^{p^s-1}(a_{1\ell}x^2+b_{1\ell}x+c_{1\ell})(x^3-\alpha_0)^\ell\\
			&+v\sum_{\ell=0}^{p^s-1}(a_{2\ell}x^{2}+b_{2\ell}x+c_{2\ell})(x^3-\alpha_0)^\ell+uv\sum_{\ell=0}^{p^s-1}(a_{3\ell}x^{2}+b_{3\ell}x+c_{3\ell})(x^3-\alpha_0)^\ell,
		\end{align*} 
		where $a_{j\ell},  b_{j\ell}, c_{j\ell} \in \mathbb{F}_{p^m}$,  $j=0, 1, 2, 3$. By Lemma \ref{9.3},  we have $uv=\alpha_4^{-1}(x^3-\alpha_0)^{p^{s}}$. Thus, $h(x)$ can be written as
		\begin{align}\label{eqn 6.1}
			\notag h(x)=&(a_{00}x^2+b_{00}x+c_{00})+\sum_{\ell=1}^{p^s-1}(a_{0\ell}x^{2}+b_{0\ell}x+c_{0\ell})(x^3-\alpha_0)^\ell+u\sum_{\ell=0}^{p^s-1}(a_{1\ell}x^2+b_{1\ell}x+c_{1\ell})(x^3-\alpha_0)^\ell\\
			\notag &+v\sum_{\ell=0}^{p^s-1}(a_{2\ell}x^{2}+b_{2\ell}x+c_{2\ell})(x^3-\alpha_0)^\ell+\alpha_4^{-1}(x^3-\alpha_0)^{p^{s}}\sum_{\ell=0}^{p^s-1}(a_{3\ell}x^{2}+b_{3\ell}x+c_{3\ell})(x^3-\alpha_0)^\ell\\
		\notag	=&(a_{00}x^2+b_{00}x+c_{00})+(x^3-\alpha_0)\sum\limits_{\ell=1}^{2p^s-1}(a^{\prime}_{0\ell}x^2+b^{\prime}_{0\ell}x+c^{\prime}_{0\ell})(x^3-\alpha_0)^{\ell-1}+u\sum\limits_{\ell=0}^{p^s-1}(a_{1\ell}x^2+b_{1\ell}x+c_{1\ell})(x^3-\alpha_0)^\ell\\
   & +v\sum_{\ell=0}^{p^s-1}(a_{2\ell}x^{2}+b_{2\ell}x+c_{2\ell})(x^3-\alpha_0)^\ell.
\end{align} 
    As $u$, $v$ and $(x^3-\alpha_0)$ are nilpotent  elements in $R_{\alpha_1, \alpha_4}$,  the polynomial $h(x)$ is non-unit if and only if $a_{00}=b_{00}=c_{00}=0$. Hence $\langle (x^3-\alpha_0),  u, v \rangle $ represents the set of all non-units of $R_{\alpha_1, \alpha_4}$,  which shows that $R_{\alpha_1, \alpha_4}$ is a local ring, and its unique maximal ideal is given by $\langle (x^3-\alpha_0), u, v\rangle $. 
    Now we shall show that $\langle (x^3-\alpha_0),  u, v\rangle $ is not a principle ideal of $R_{\alpha_1, \alpha_4}$. Suppose $u \in \langle (x^3-\alpha_0), v \rangle $. Then there exist $\hbar(x)=\sum\limits_{\ell=0}^{2p^s-1}(\hbar_{0\ell}x^2+\hbar^{\prime}_{0\ell}x+\hbar^{\prime \prime}_{0\ell})(x^3-\alpha_0)^{\ell}+u\sum\limits_{\ell=0}^{p^s-1}(\hbar_{1\ell}x^2+\hbar^{\prime}_{1\ell}x+\hbar^{\prime \prime}_{1\ell})(x^3-\alpha_0)^\ell +v\sum\limits_{\ell=0}^{p^s-1}(\hbar_{2\ell}x^2+\hbar^{\prime}_{2\ell}x+\hbar^{\prime \prime}_{2\ell})(x^3-\alpha_0)^\ell$ and $\wp(x)=\sum\limits_{\ell=0}^{2p^s-1}(\wp_{0\ell}x^2+\wp^{\prime}_{0\ell}x+\wp^{\prime \prime}_{0\ell})(x^3-\alpha_0)^{\ell}+u\sum\limits_{\ell=0}^{p^s-1}(\wp_{1\ell}x^2+\wp^{\prime}_{1\ell}x+\wp^{\prime \prime}_{1\ell})(x^3-\alpha_0)^\ell +v\sum\limits_{\ell=0}^{p^s-1}(\wp_{2\ell}x^2+\wp^{\prime}_{2\ell}x+\wp^{\prime \prime}_{2\ell})(x^3-\alpha_0)^\ell \in R_{\alpha_1, \alpha_4}$ such that $u=(x^3-\alpha_0)\hbar(x)+v\wp(x)$. Hence, 
    \begin{align}\label{4}
     \notag   u=&(x^3-\alpha_0)\Bigg[\sum\limits_{\ell=0}^{2p^s-1}(\hbar_{0\ell}x^2+\hbar^{\prime}_{0\ell}x+\hbar^{\prime \prime}_{0\ell})(x^3-\alpha_0)^{\ell}+u\sum\limits_{\ell=0}^{p^s-1}(\hbar_{1\ell}x^2+\hbar^{\prime}_{1\ell}x+\hbar^{\prime \prime}_{1\ell})(x^3-\alpha_0)^\ell \\
     \notag &+v\sum\limits_{\ell=0}^{p^s-1}(\hbar_{2\ell}x^2+\hbar^{\prime}_{2\ell}x+\hbar^{\prime \prime}_{2\ell})(x^3-\alpha_0)^\ell\Bigg]
    +v\Bigg[\sum\limits_{\ell=0}^{2p^s-1}(\wp_{0\ell}x^2+\wp^{\prime}_{0\ell}x+\wp^{\prime \prime}_{0\ell})(x^3-\alpha_0)^{\ell}\\
    \notag &+u\sum\limits_{\ell=0}^{p^s-1}(\wp_{1\ell}x^2+\wp^{\prime}_{1\ell}x+\wp^{\prime \prime}_{1\ell})(x^3-\alpha_0)^\ell +v\sum_{\ell=0}^{p^s-1}(\wp_{2\ell}x^2+\wp^{\prime}_{2\ell}x+\wp^{\prime \prime}_{2\ell})(x^3-\alpha_0)^\ell \Bigg]\\
      \notag  =&(x^3-\alpha_0)\sum\limits_{\ell=0}^{2p^s-1}(\hbar_{0\ell}x^2+\hbar^{\prime}_{0\ell}x+\hbar^{\prime \prime}_{0\ell})(x^3-\alpha_0)^{\ell}+u(x^3-\alpha_0)\sum\limits_{\ell=0}^{p^s-1}(\hbar_{1\ell}x^2+\hbar^{\prime}_{1\ell}x+\hbar^{\prime \prime}_{1\ell})(x^3-\alpha_0)^\ell \\
      \notag  &+v(x^3-\alpha_0) \sum\limits_{\ell=0}^{p^s-1}(\hbar_{2\ell}x^2+\hbar^{\prime}_{2\ell}x+\hbar^{\prime \prime}_{2\ell})(x^3-\alpha_0)^\ell+v \sum\limits_{\ell=0}^{2p^s-1}(\wp_{0\ell}x^2+\wp^{\prime}_{0\ell}x+\wp^{\prime \prime}_{0\ell})(x^3-\alpha_0)^{\ell}\\
    &+uv\sum\limits_{\ell=0}^{p^s-1}(\wp_{1\ell}x^2+\wp^{\prime}_{1\ell}x+\wp^{\prime \prime}_{1\ell})(x^3-\alpha_0)^\ell.
    \end{align}
  From Equation \ref{4},
  \begin{equation*}
      (x^3-\alpha_0)\sum\limits_{\ell=0}^{p^s-1}(\hbar_{1\ell}x^2+\hbar^{\prime}_{1\ell}x+\hbar^{\prime \prime}_{1\ell})(x^3-\alpha_0)^\ell=1,
  \end{equation*}
which implies that $x^3-\alpha_0$ is invertible. This is a contradiction. Hence, $u \notin \langle (x^3-\alpha_0), v \rangle $. Similarly, we can show that $v \in \langle (x^3-\alpha_0), u \rangle $. Next, we show that $(x^3-\alpha_0)\notin \langle u, v \rangle$. Suppose $(x^3-\alpha_0)\in \langle u, v \rangle$. Then there exist $f(x),g(x)\in R_{\alpha_1, \alpha_4}$ such that, $(x^3-\alpha_0)=uf(x)+vg(x)$. Here the nilpotency index of $uf(x)+vg(x)$ is at most 3, whereas the nilpotency index of $(x^3-\alpha_0)$ is at least 6, which gives a contradiction. Therefore,  the maximal ideal $\langle (x^3-\alpha_0), u, v \rangle $ is not principal. By Proposition \ref{prop2.1}, $R_{\alpha_1, \alpha_4}$ is not a chain ring.
\end{proof}

\section{Conclusion}
This article examined the algebraic structures of $\alpha$-constacyclic codes of length $3p^s$ over $\mathcal{R}=\mathbb{F}_{p^m} + u\mathbb{F}_{p^m} + v\mathbb{F}_{p^m} +uv\mathbb{F}_{p^m}$, where $\alpha$ is a unit of $\mathcal{R}$. In the case of $\alpha=\beta^3$, where $\beta \in \mathcal{R}$, any $\alpha$-constacyclic code of length $3p^s$ is determined under two conditions $p^m \equiv 1 (\mod{3})$ and $p^m \equiv 2 (\mod{3})$.  In the case of a non-cube $\alpha$, we determined the structures of the ideals of $\mathcal{R}_{\alpha}=\frac{\mathcal{R}[x]}{\langle x^{3p^s}-\alpha \rangle}$ in three of the eight subcases. In the subcases of $\alpha=\alpha_1 + \alpha_3 v + \alpha_4 uv$ and $\alpha=\alpha_1 + \alpha_2 u + \alpha_3 v + \alpha_4 uv$, it is shown that the rings $\frac{\mathcal{R}[x]}{\langle x^{3p^s}-\alpha \rangle}$  are local rings with maximal ideal $\langle (x^3-\alpha_0), u\rangle $, but they are  not chain rings. Moreover, $\alpha$-constacyclic codes are classified into 4 distinct types of ideals of the ring $\frac{\mathcal{R}[x]}{\langle x^{3p^s}-\alpha \rangle}$. Also, the number of codewords and dual codes of such constacyclic codes are investigated. In the third case of a non-cube $\alpha=\alpha_1 + \alpha_4 uv$, we discussed essential properties of the ring $\frac{\mathcal{R}[x]}{\langle x^{3p^s}-\alpha \rangle}$ to analyze the structure of $(\alpha_1 + \alpha_4 uv)$-constacyclic codes.

\section*{Authors Contributions} All authors contributed equally to this manuscript.
\section*{Conflict of Interest} The authors declare that they have no conflict of interest.
\section*{Data Availability} The authors confirm that the data supporting the findings of this study are available within the article.

\end{document}